\numberwithin{equation}{section}
\definecolor{ao(english)}{rgb}{0.0, 0.5, 0.0}
\theoremstyle{plain}
\newtheorem{theorem}{Theorem}
\numberwithin{theorem}{section}
\newtheorem{corollary}[theorem]{Corollary}
\newtheorem*{corollary*}{Corollary}
\newtheorem*{Example*}{Example}
\newtheorem{lemma}[theorem]{Lemma}
\newtheorem{proposition}[theorem]{Proposition}
\newtheorem{conjecture}[theorem]{Conjecture}
\theoremstyle{definition}
\newtheorem{definition}[theorem]{Definition}
\newtheorem*{def*}{Definition}
\newtheorem*{theorem*}{Theorem}
\newtheorem*{definition*}{Definition}
\theoremstyle{remark}
\newtheorem*{remark}{Remark}
\begin{document}

\title[Families of congruences for the second order mock theta function $\mathcal{B}(q)$]{Infinite  families of congruences for the second order mock theta function $\mathcal{B}(q)$}

\author[H. Nath]{Hemjyoti Nath}
\address[H. Nath]{Department of Mathematics, University of Florida, P.O. Box 118105, Gainesville, FL 32611-8105, USA}
\email{h.nath@ufl.edu}

\author[H. Das]{Hirakjyoti Das}
\address[H. Das]{Department of Mathematics, B. Borooah College (Autonomous), Guwahati 781007, Assam, India}
\email{hirak@bborooahcollege.ac.in}

\keywords{Mock theta function, Theta function, Congruence, Newman’s identities}

\subjclass[2020]{11P81, 11P83, 05A17}


\begin{abstract}
The arithmetic properties of the second order mock theta function $\mathcal{B}(q)$, introduced by McIntosh, defined by
\begin{equation*}
    \mathcal{B}(q) := \sum_{n \geq 0} \frac{q^n (-q;q^2)_n}{(q;q^2)_{n+1}} = \sum_{n \geq 0}b(n)q^n,
\end{equation*}
have been extensively studied. For instance, for all $n\ge0$, Kaur and Rana \cite{kaur} established congruences such as for all $n\ge0$,
\begin{align*}
    b(12n+10) &\equiv 0 \pmod{36}, \quad
    b(18n+16) \equiv 0 \pmod{72},
\end{align*}
Chen and Mao \cite{CM} proved that for all $n\ge0$,
\begin{align*}
    b(4n+1) &\equiv 0 \pmod{2}, \quad
    b(4n+2) \equiv 0 \pmod{4},
\end{align*}
while Mao \cite{Mao} also showed that for all $n\ge0$,
\begin{align*}
    b(6n+2) &\equiv 0 \pmod{4}, \quad
    b(6n+4) \equiv 0 \pmod{9}.
\end{align*}
In this paper, we find new congruences and infinite families of congruences modulo $2, 4, 8, 36, 54, 72$ for the function $\mathcal{B}(q)$. For example, let $p \geq 5$ be a prime, if $\left(\frac{-3}{p}\right)_L = -1$, then for all $n, k \geq 0$ with $p \nmid n$, we have
\begin{equation*}
    b\left( 3p^{2k+1}n + \frac{p^{2k+2}-1}{2} \right) \equiv 0 \pmod{2}.
\end{equation*}
Let $p \geq 5$ be a prime and $1 \leq \ell \leq p - 1$ such that $\left( \frac{12\ell + 9}{p} \right)_L = -1$. Then for all $n, k \geq 0$, we have
\begin{equation*}
    b\left(6p^{2k+3}n + \frac{3p^{2k+2}(4\ell+3)-1}{2}\right) \equiv 0 \pmod{36}.
\end{equation*}
Our techniques involve elementary $q$-series and Maple.
\end{abstract}

\maketitle

\section{Introduction}

In his last letter to Hardy in 1920, Ramanujan introduced 17 $q$-series known as \emph{mock theta functions}, which resemble modular forms in their asymptotic behavior but do not transform like modular forms under the full modular group. The precise nature of mock theta functions remained unclear for decades. A major breakthrough came with the work of Zwegers \cite{zwegers}, who showed that mock theta functions can be completed by adding explicit non-holomorphic correction terms to form harmonic Maass forms of weight $1/2$. This reformulation places mock theta functions in the framework of harmonic Maass forms and connects them to the theory of modular forms, Maass forms, and automorphic representations.

In 2007, McIntosh investigated two second order mock theta functions, which are discussed in detail in \cite{McIntosh2007} and further elaborated in \cite{BringmannOnoRhoades}. These mock theta functions are defined by
\begin{align}
    \mathcal{A}(q) &=\sum_{n \geq 0} \frac{q^{(n+1)^2}(-q;q^2)_n}{(q;q^2)_{n+1}^2} = \sum_{n \geq 0} \frac{q^{n+1}(-q^2;q^2)_n}{(q;q^2)_{n+1}}, \label{1.1} \\
    \mathcal{B}(q) &=\sum_{n \geq 0} \frac{q^{n(n+1)}(-q^2;q^2)_n}{(q;q^2)_{n+1}^2} =  \sum_{n \geq 0} \frac{q^n(-q;q^2)_n}{(q;q^2)_{n+1}}, \label{1.2}
\end{align}
where the $q$-Pocchhamer symbols for complex numbers $a$ and $q$, $\mid q\mid <1$ are defined as
\begin{align*}
    (a,q)_n:=\prod_{j=0}^{n-1}(1-aq^j), \quad \quad (a,q)_\infty:=\prod_{j=0}^{\infty}(1-aq^j).
\end{align*}
The functions $\mathcal{A}(q)$ and $\mathcal{B}(q)$ have been interpreted combinatorially in terms of overpartitions via the odd Ferrers diagram, as described in \cite{burson}. In this paper, we investigate certain arithmetic properties of the second order mock theta function $\mathcal{B}(q)$. Bringmann, Ono, and Rhoades \cite{BringmannOnoRhoades} established that
\begin{equation}
    \frac{\mathcal{B}(q) + \mathcal{B}(-q)}{2} = \frac{f_4^5}{f_2^4}, \label{1.3}
\end{equation}
where $f_m^k := (q^m;q^m)^k_\infty$ for positive integers $m$ and $k$. For our work, we define the series
\begin{equation*}
    \mathcal{B}(q) := \sum_{n \geq 0} b(n)q^n. 
\end{equation*}
Following \eqref{1.3} and the above, the even part of \( \mathcal{B}(q) \) is given by
\begin{equation}
    \sum_{n \geq 0} b(2n)q^n = \frac{f_2^5}{f_1^4}. \label{Gen P_B 2n}
\end{equation}
In 2012, Chan and Mao \cite[(2.12), (2.13)]{CM} applied the theory of modular forms and utilized Zwegers’ work to establish two identities for \( b(n) \):
\begin{align}
    \sum_{n \geq 0} b(4n+1)q^n &= 2 \frac{f_2^{10}}{f_1^9}, \label{Gen P_B 4n+1}\\
    \sum_{n \geq 0} b(4n+2)q^n &= 4 \frac{f_2^2 f_4^4}{f_1^5}. \label{Gen P_B 4n+2}
\end{align}
In a subsequent work, Qu, Wang, and Yao \cite{quwangyao} demonstrated that all coefficients of the odd powers of \( q \) in \( \mathcal{B}(q) \) are even. Recently, Mao \cite[(1.5), (1.6)]{Mao} derived analogues of equations \eqref{Gen P_B 4n+1} and \eqref{Gen P_B 4n+2}:
\begin{align}
    \sum_{n \geq 0} b(6n+2)q^n = 4 \frac{f_2^{10} f_3^2}{f_1^{10} f_6}, \label{Gen P_B 6n+2}\\
    \sum_{n \geq 0} b(6n+4)q^n = 9 \frac{f_2^4 f_3^4 f_6}{f_1^8}. \label{Gen P_B 6n+4}
\end{align}
Also, in Mao's work \cite[(4.1)]{Mao}, the referee pointed out that
\begin{equation}
    \sum_{n \geq 0} b(4n)q^n = \frac{f_2^{14}}{f_1^9f_4^4}. \label{Gen P_B 4n}
\end{equation}
Recently, Kaur and Rana \cite{kaur}, proved several congruences for this function. In particular, they proved that for all $n\ge0$, 
\begin{align*}
    b(12n+9) &\equiv 0 \pmod{18}, \\
    b(12n+10) &\equiv 0 \pmod{36},\\
    b(18n+10) &\equiv 0 \pmod{72}, \\
    b(18n+16) &\equiv 0 \pmod{72}.
\end{align*}
Furthermore, they proved many other congruences modulo $5,10,72,144$.

Motivated by these works, we extend the list of congruences for the coefficients of $\mathcal{B}(q)$ by establishing infinite families of congruences for $b(n)$ modulo $2, 4, 8, 36, 54,$ and $72$.\\

Throughout the paper, let
\begin{equation}\label{n1}
    \sum_{n \geq 0}a(n)q^n := f_1^4f_2.
\end{equation}
For primes $p \geq 3$, we define $r$ and $s$ by
\begin{equation}\label{n4}
    r:=r(p) = a\left( \frac{p^2-1}{4} \right)+(-1)^{\frac{p-1}{2}}p\left( \frac{\frac{p^2-1}{4}}{p} \right)_L, \qquad s:=s(p)=p^3,
\end{equation}
where the Legendre symbol for a prime $p\geq3$ is defined as
\begin{align*}
\left(\dfrac{a}{p}\right)_L:=\begin{cases}\quad1,\quad \text{if $a$ is a quadratic residue modulo $p$ and $p\nmid a$,}\\\quad 0,\quad \text{if $p\mid a$,}\\~-1,\quad \text{if $a$ is a quadratic nonresidue modulo $p$.}
\end{cases}
\end{align*}
Then we also define
\begin{equation}\label{n2}
    \nu(p):=\begin{cases}
        2, & \text{if} \hspace{1.5mm} r \equiv 0 \pmod{8},\\
        4, & \text{if} \hspace{1.5mm} r \equiv 4 \pmod{8},\\
        8, & \text{if} \hspace{1.5mm} r \equiv 2 \pmod{4},
    \end{cases}
\end{equation}
\begin{equation}\label{n3}
    g(p):=\begin{cases}
        -s, & \text{if} \hspace{1.5mm} \nu(p) = 2,\\
        -r^2s+s^2, & \text{if} \hspace{1.5mm} \nu(p) = 4,\\
        -r^6s+5s^2r^4-6r^2s^3+s^4, & \text{if} \hspace{1.5mm} \nu(p) = 8.\\
    \end{cases}
\end{equation}

Our results are outlined below:

\begin{proposition}\label{Thm Gen P_B 3n}
    We have
    \begin{align}\label{Gen P_B 3n} 
    \sum_{n \geq 0} b(3n)q^n&= \frac{f_2^7f_3^2}{f_1^6f_4f_6}.
    \end{align}
\end{proposition}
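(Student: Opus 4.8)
The plan is to assemble $\sum_{n\ge0}b(3n)q^n$ out of arithmetic progressions of $b$ that are already available as eta-quotients, each one read off from a suitable $3$-dissection. The convenient bookkeeping is modulo $12$: since
\begin{align*}
\sum_{n\ge0}b(3n)q^n &=\sum_{k\ge0}b(12k)q^{4k}+\sum_{k\ge0}b(12k+3)q^{4k+1}\\
&\quad+\sum_{k\ge0}b(12k+6)q^{4k+2}+\sum_{k\ge0}b(12k+9)q^{4k+3},
\end{align*}
it suffices to identify the four inner series. Three of them come directly from the introduction: $\sum_k b(12k)q^k$ is the $\equiv0\pmod3$ part of \eqref{Gen P_B 4n}, $\sum_k b(12k+6)q^k=\sum_k b(4(3k+1)+2)q^k$ is the $\equiv1\pmod3$ part of \eqref{Gen P_B 4n+2}, and $\sum_k b(12k+9)q^k=\sum_k b(4(3k+2)+1)q^k$ is the $\equiv2\pmod3$ part of \eqref{Gen P_B 4n+1}. (Equivalently, the two even progressions combine into $\sum_m b(6m)q^m$, which is the $\equiv0\pmod3$ part of the even generating function \eqref{Gen P_B 2n}, giving a convenient cross-check.)

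To perform each $3$-dissection I would invoke the standard cubic dissections of $f_1$ and $1/f_1$, writing every eta-quotient as the sum of its three residue classes modulo $3$ and keeping the relevant one; Maple handles the bookkeeping and the simplification of the retained class to a single eta-quotient. This disposes of $b(12k)$, $b(12k+6)$ and $b(12k+9)$ — equivalently, of the entire even progression $b(6m)$ together with the odd-index subprogression $b(12k+9)$ — with no input beyond the recorded identities.

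The remaining series $\sum_k b(12k+3)q^k$ is the crux, and it is genuinely new information. The indices $12k+3\equiv3\pmod4$ are exactly the residues \emph{not} represented by any eta-quotient in the introduction, since the only available odd-index identity, \eqref{Gen P_B 4n+1}, covers the class $\equiv1\pmod4$; this reflects the fact that the odd part of $\mathcal B(q)$ is mock rather than modular, so $b(12k+3)$ cannot be obtained by dissecting the listed identities alone. To reach it I would extract the full $\equiv3\pmod6$ subprogression $\sum_m b(6m+3)q^m$ of $\mathcal B$, from which $\sum_k b(12k+3)q^k$ follows after separating off the already-known $b(12k+9)$. Concretely I would seek a representation of the odd part of $\mathcal B(q)$ — via the defining series \eqref{1.2} or an Appell--Lerch/Lambert-series form — and $3$-dissect it; the expectation, consistent with the modular shape of the claimed answer, is that the $\equiv3\pmod6$ component is modular and equals an eta-quotient. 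An equivalent, more computational route is to $2$-dissect the candidate $\frac{f_2^7f_3^2}{f_1^6f_4f_6}$, match its even part to $\sum_m b(6m)q^{2m}$ from the previous step, and verify its odd part against $\sum_m b(6m+3)q^{2m+1}$ by computing these coefficients directly from \eqref{1.2}.

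Finally I would recombine the four pieces, simplify the resulting eta-quotient, and confirm that it collapses to $\frac{f_2^7f_3^2}{f_1^6f_4f_6}$, with Maple verifying the power-series identity to high order as a safeguard. I expect the main obstacle to be precisely the odd, $\equiv3\pmod4$ progression $b(12k+3)$: the even progression and the $\equiv1\pmod4$ odd progression reduce to routine cubic dissections of the introduction's identities, whereas this last piece requires genuinely new control of the mock (odd) part of $\mathcal B(q)$.
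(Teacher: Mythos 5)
Your plan has a genuine gap, and it sits exactly where you locate the crux. For the progression $12k+3$ (equivalently, for $\sum_m b(6m+3)q^m$) you offer only the \emph{expectation} that the $\equiv 3\pmod 6$ component of the mock (odd) part of $\mathcal{B}(q)$ is modular and equals an eta-quotient, together with a fallback of ``verifying the power series to high order.'' Neither is a proof: the first is precisely the open problem, and the second is a finite check with no Sturm-type bound to make it conclusive, since you have not exhibited $\sum_m b(6m+3)q^m$ as a modular object in the first place. There is also a circularity hazard: the only identity of this shape in the paper, $\sum_n b(6n+3)q^n = 6f_2^7f_3^3/f_1^9$ in \eqref{f1}, is itself \emph{derived from} Proposition \ref{Thm Gen P_B 3n} by $2$-dissection, so it cannot be used as input. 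A secondary, fixable issue: even for the ``routine'' pieces, the individual residue classes of $f_2^{14}/(f_1^9f_4^4)$, $f_2^2f_4^4/f_1^5$, and $f_2^{10}/f_1^9$ under $3$-dissection need not individually be single eta-quotients; only the recombined sum is claimed to be one, so your intermediate simplification targets may not exist.

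For comparison, the paper does not prove the proposition analytically at all. Section \ref{sec 1} is a computer \emph{verification}: the coefficients $b(3n)$ are computed from the defining series \eqref{1.2} up to $q^{1000}$ and matched against the eta-quotient with Garvan's \texttt{qetamake}, and the authors explicitly state (in Section \ref{sec 1} and again in the concluding remarks) that an analytic proof ``is desirable but seems quite challenging.'' So your decomposition by residues modulo $12$, reducing everything to the single mock progression $b(12k+3)$, is a reasonable and genuinely different strategy that would, if completed, supply exactly what the paper leaves open --- but as written it does not complete that step, and the step is the whole difficulty.
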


\begin{remark}\label{t0.0.1.1}
    For all \( n \geq 0 \), we have
\begin{equation}
    b(3n) \equiv 
    \begin{cases} 
      (-1)^k \pmod{2}, & \text{if } n = 2k(3k+1) \text{ for some integer } k, \\
      0 \pmod{2}, & \text{otherwise.}
    \end{cases}
\end{equation}
\end{remark}
\noindent The above remark follows directly from Proposition \ref{Thm Gen P_B 3n}, \eqref{e2.0.3.4}, and \eqref{e0.1}. Hence, we do not include its proof.

\begin{theorem} \label{Thm Cong 1}
For all $n \geq 0$, we have  
    \begin{align}
        b\left( 6n+3 \right)&\equiv 0 \pmod{6}, \label{e501.0.0}\\
        b\left( 36n+22 \right)&\equiv 0 \pmod{36}, \label{e501.0.0.0}\\
        b\left( 12n+9 \right)&\equiv 0 \pmod{54}.\label{e50.0.0}
    \end{align}
\end{theorem}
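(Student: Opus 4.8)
The plan is to obtain each of the three congruences by isolating the relevant residue class of coefficients from one of the product formulas already recorded and then reducing the resulting eta-quotient modulo the prime power in question, using the elementary congruences $f_1^2\equiv f_2\pmod2$, $f_1^4\equiv f_2^2\pmod4$, $f_1^3\equiv f_3\pmod3$ and their powers. Note first that $6n+3$ and $12n+9$ are odd, which is what will produce the factor $2$ in \eqref{e501.0.0} and \eqref{e50.0.0}. For \eqref{e501.0.0} I would start from Proposition \ref{Thm Gen P_B 3n}. Reducing $\sum_n b(3n)q^n=\frac{f_2^7f_3^2}{f_1^6f_4f_6}$ modulo $2$ via $f_1^6\equiv f_2^3$ and $f_3^2\equiv f_6$ collapses the right-hand side to $f_2^4/f_4\equiv f_4\pmod2$, a series in $q^4$; hence every $b(3n)$ with $3n$ odd, in particular $b(6n+3)$, is even. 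Reducing the same identity modulo $3$ via $f_1^6\equiv f_3^2$ and $f_2^7\equiv f_2f_6^2$ collapses it to $\frac{f_2f_6}{f_4}$, a series in $q^2$, so the odd-index coefficients $b(6n+3)$ vanish modulo $3$. Combining the two congruences gives \eqref{e501.0.0}.

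For \eqref{e501.0.0.0} I would write $36n+22=6(6n+3)+4$ and invoke \eqref{Gen P_B 6n+4}, so that $b(36n+22)=9\,[q^{6n+3}]\frac{f_2^4f_3^4f_6}{f_1^8}$. Modulo $4$ the congruence $f_1^8\equiv f_2^4\pmod4$ (from $f_1^4\equiv f_2^2$) reduces $\frac{f_2^4f_3^4f_6}{f_1^8}$ to $f_3^4f_6$. Substituting $q^3\mapsto q$ identifies $[q^{6n+3}]f_3^4f_6$ with $a(2n+1)$, where $\sum_n a(n)q^n=f_1^4f_2$ as in \eqref{n1}; and $a(2n+1)\equiv0\pmod4$ because $f_1^4f_2\equiv f_2^3\pmod4$ is a series in $q^2$. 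Thus the bracketed coefficient is divisible by $4$, and multiplying by the explicit factor $9$ yields \eqref{e501.0.0.0}.

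For \eqref{e50.0.0}, since $12n+9=4(3n+2)+1$, identity \eqref{Gen P_B 4n+1} gives $b(12n+9)=2\,[q^{3n+2}]\frac{f_2^{10}}{f_1^9}$; the prefactor $2$ settles the modulus $2$, leaving the task of proving $[q^{3n+2}]\frac{f_2^{10}}{f_1^9}\equiv0\pmod{27}$. Here I would first simplify the eta-quotient modulo $27$: writing $f_1^3=f_3+3t$, a binomial expansion shows $f_1^{27}\equiv f_3^9\pmod{27}$, whence $\frac{1}{f_1^9}=\frac{f_1^{18}}{f_1^{27}}\equiv\frac{f_1^{18}}{f_3^9}\pmod{27}$ and therefore $\frac{f_2^{10}}{f_1^9}\equiv\frac{f_1^{18}f_2^{10}}{f_3^9}\pmod{27}$. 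Because $1/f_3^9$ is a power series in $q^3$, extracting the progression $q^{3n+2}$ reduces the claim to showing that the coefficients of the holomorphic eta-quotient $f_1^{18}f_2^{10}$ at exponents $\equiv2\pmod3$ are divisible by $27$.

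The main obstacle is precisely this mod-$27$ vanishing for $f_1^{18}f_2^{10}$: unlike the reductions used for \eqref{e501.0.0} and \eqref{e501.0.0.0}, it is not produced by a single elementary substitution, and it sharpens the modulus $9$ already implicit in Kaur and Rana's $b(12n+9)\equiv0\pmod{18}$. I would attack it by computing the exact $3$-dissection $f_1^{18}f_2^{10}=A_0(q^3)+qA_1(q^3)+q^2A_2(q^3)$, with the $A_j$ expressed as eta-quotients in $q^3$ obtained from the standard $3$-dissections of $f_1$ and $f_2$, and then verifying, as the paper's Maple-assisted methods allow, that every coefficient of the component $A_2$ is divisible by $27$. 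Combining this with the prefactor $2$ then gives $b(12n+9)\equiv0\pmod{54}$, completing the proof of Theorem \ref{Thm Cong 1}.
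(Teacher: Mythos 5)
Your arguments for \eqref{e501.0.0} and \eqref{e501.0.0.0} are correct and complete, and they take a genuinely more elementary route than the paper: for \eqref{e501.0.0} the paper instead substitutes the $2$-dissection \eqref{l1} into \eqref{Gen P_B 3n} to obtain the exact identity $\sum_{n\ge0}b(6n+3)q^n=6f_2^7f_3^3/f_1^9$, and for \eqref{e501.0.0.0} it $3$-dissects $9f_6^3(f_4/f_2^2)^2$ via \eqref{l4}; your reductions modulo $2$, $3$ and $4$ reach the same conclusions with less machinery (though note that the paper's exact identity \eqref{f1} is then reused as the launching point for the third congruence).

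The problem is \eqref{e50.0.0}. Your reduction of $b(12n+9)\equiv0\pmod{54}$ to the claim that every coefficient of $f_1^{18}f_2^{10}$ at an exponent $\equiv2\pmod3$ is divisible by $27$ is valid, and the claim is in fact true, but you do not prove it --- you only announce that you would compute the $3$-dissection of $f_1^{18}f_2^{10}$ and ``verify'' the divisibility. That verification is the entire content of the congruence: since $f_1$ and $f_2$ each have all three components present in their $3$-dissections, expanding $f_1^{18}f_2^{10}$ produces a large multinomial sum whose individual terms are not divisible by $27$ (e.g.\ terms with multinomial coefficient $18$), so the divisibility of the $q^{3n+2}$ component rests on cancellations that a finite Maple check cannot certify for all $n$; one would need either an exact closed form $A_2=27\times(\text{explicit eta-quotients})$ or a Sturm-type bound, neither of which you supply. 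The paper sidesteps this entirely: starting from \eqref{f1} it writes $f_3^3/f_1^9=(f_3/f_1^3)^3$ and cubes the two-term $2$-dissection \eqref{l1}, so that the odd part of $6(A+3qB)^3$ is $54qA^2B+162q^3B^3$ and the factor $54$ appears explicitly, with no residual divisibility claim to check. As written, your treatment of \eqref{e50.0.0} is a reduction to an unproved (and not obviously easier) statement, not a proof.
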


\begin{theorem}\label{t0.0.1.0}
    Let $p \geq 5$ be a prime. If $\left(\frac{-3}{p}\right)_L=-1$, then for all $n, k \geq 0$ with $p \nmid n$, we have  
    \begin{equation}\label{e50.0}
       b\left( 3p^{2k+1}n + \frac{p^{2k+2}-1}{2} \right) \equiv 0 \pmod{2}.
    \end{equation}
\end{theorem}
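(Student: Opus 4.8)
The plan is to push everything through modulo $2$ and reduce the claim to a statement about perfect squares. First I would simplify the generating function of Proposition~\ref{Thm Gen P_B 3n} using $f_{2m}\equiv f_m^2\pmod 2$: since $f_2\equiv f_1^2$, $f_4\equiv f_1^4$, and $f_6\equiv f_3^2$ modulo $2$, one finds
\[
\sum_{n\ge0}b(3n)q^n=\frac{f_2^7f_3^2}{f_1^6f_4f_6}\equiv\frac{f_1^{14}f_3^2}{f_1^{10}f_3^2}=f_1^4\pmod 2 .
\]
Applying $f_1^4\equiv f_4\pmod 2$ and the pentagonal number theorem to $f_4$ (equivalently, invoking Remark~\ref{t0.0.1.1}) shows that $b(3m)$ is odd exactly when $m=6j^2+2j$ for some $j\in\mathbb Z$ and even otherwise. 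The clean reformulation I would record is
\[
b(3m)\equiv 1\pmod 2\iff 6m+1\ \text{is a perfect square},
\]
which follows from $6(6j^2+2j)+1=(6j+1)^2$ together with the fact that any integer square coprime to $6$ is of the shape $(6j\pm1)^2$, while $6m+1\equiv1\pmod 6$ is automatically coprime to $6$.

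With this criterion the rest is a valuation computation. Writing $M:=3p^{2k+1}n+\tfrac{p^{2k+2}-1}{2}$, the hypothesis $p\ge5$ gives $p^2\equiv1\pmod 6$, so $6\mid(p^{2k+2}-1)$ and $M=3m$ with $m=p^{2k+1}n+\tfrac{p^{2k+2}-1}{6}\in\mathbb Z$. A direct computation then gives the key factorization
\[
6m+1=6p^{2k+1}n+p^{2k+2}=p^{2k+1}(6n+p).
\]
Because $p\ge5$ is coprime to $6$ and $p\nmid n$, we have $p\nmid(6n+p)$, so the exponent of $p$ in $6m+1$ equals $2k+1$, an odd number. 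Hence $6m+1$ is not a perfect square, and the criterion above yields $b(M)=b(3m)\equiv0\pmod 2$, as claimed.

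The conceptual point is the exclusion of the central term: when $n=0$ one gets $6m+1=p^{2k+2}$, a genuine square, so $b\bigl(\tfrac{p^{2k+2}-1}{2}\bigr)$ is \emph{odd}; the hypothesis $p\nmid n$ is precisely what removes this exceptional point and forces the odd $p$-adic valuation. The valuation step itself is routine, so I expect the real subtlety to be locating the role of the Legendre hypothesis $\left(\tfrac{-3}{p}\right)_L=-1$, which the argument above does not visibly use. My reading is that $f_1^4\equiv\sum_j q^{6j^2+2j}\pmod 2$ is, after rescaling, a CM theta series attached to $\mathbb Q(\sqrt{-3})$, and that the hypothesis is the natural one in the uniform $p$-dissection scheme governing the higher-modulus theorems of the paper. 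The step I would treat most carefully is therefore reconciling this self-contained valuation proof with that framework and confirming that $\left(\tfrac{-3}{p}\right)_L=-1$ is indeed the condition to state here for consistency.
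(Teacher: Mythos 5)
Your proof is correct, and it diverges from the paper's at the decisive step in a way that actually strengthens the theorem. Both arguments begin identically, reducing \eqref{Gen P_B 3n} modulo $2$ to $\sum_{n\ge0}b(3n)q^n\equiv f_1^4\pmod 2$. The paper then factors $f_1^4=f_1\cdot f_1^3$, applies \eqref{e2.0.3.4} and \eqref{e2.0.3.3}, and lands on the binary quadratic form $3(2m+1)^2+(6j-1)^2$; this only gives a \emph{sufficient} condition for evenness (non-representability of $24n+4$ by $3x^2+y^2$), and the hypothesis $\left(\frac{-3}{p}\right)_L=-1$ is exactly what is needed there, since it forces $p$ to be inert so that every integer represented by $3x^2+y^2$ has even $p$-adic valuation. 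You instead use $f_1^4\equiv f_2^2\equiv f_4\pmod 2$ and the pentagonal number theorem in one variable, which gives an \emph{exact} parity criterion ($b(3m)$ odd iff $6m+1$ is a perfect square, since the exponents $6j^2-2j$ are pairwise distinct — a point worth stating explicitly); the factorization $6m+1=p^{2k+1}(6n+p)$ with $p\nmid(6n+p)$ then kills squareness for any prime $p\ge5$ with $p\nmid n$. So your suspicion is right: the Legendre-symbol hypothesis is an artifact of the paper's two-variable route and is not needed for the congruence itself; your argument proves \eqref{e50.0} for every prime $p\ge5$, and your observation that $n=0$ gives $6m+1=p^{2k+2}$, hence an odd value, correctly identifies why $p\nmid n$ cannot be dropped.
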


\begin{theorem} \label{Thm 1.2}
Let $p$ be an odd prime and $1 \leq \ell \leq p - 1$ such that $\left( \frac{4\ell + 1}{p} \right)_L = -1$. Then for all $n \geq 0$, we have  
\begin{align} \label{Cor 2}
    b\left(2(pn + \ell)\right) &\equiv 0 \pmod{4}.
\end{align}
Furthermore, for all $n \geq 0 $ and $k \geq 0$, we have
\begin{equation} \label{Cor 2.1}
    b(2n) \equiv (-1)^{\frac{(k+1)(1-p)}{2}}p^{-k-1} b\left(2p^{2k+2}n + \frac{p^{2k+2}-1}{2}\right) \pmod{4}.
\end{equation}
\end{theorem}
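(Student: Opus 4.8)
The plan is to collapse everything modulo $4$ onto the single theta series $f_2^3$ and then read off the coefficients from Jacobi's identity. The starting point is \eqref{Gen P_B 2n}, which gives $\sum_{n\ge0}b(2n)q^n=f_2^5/f_1^4$. Since $(1-q^m)^2\equiv 1-q^{2m}\pmod 2$, one has $f_1^2\equiv f_2\pmod 2$; writing $f_1^2=f_2+2G$ with $G\in\bbZ[[q]]$ and squaring yields $f_1^4=f_2^2+4f_2G+4G^2\equiv f_2^2\pmod 4$. As $f_1^4\equiv f_2^2\pmod4$ and both are invertible in $(\bbZ/4\bbZ)[[q]]$ (constant term $1$), their inverses agree there, so
\begin{equation*}
\sum_{n\ge0}b(2n)q^n=\frac{f_2^5}{f_1^4}\equiv \frac{f_2^5}{f_2^2}=f_2^3\pmod 4.
\end{equation*}
Applying Jacobi's identity $f_1^3=\sum_{j\ge0}(-1)^j(2j+1)q^{j(j+1)/2}$ under $q\mapsto q^2$ gives $f_2^3=\sum_{j\ge0}(-1)^j(2j+1)q^{j(j+1)}$. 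Hence $b(2m)\equiv 0\pmod4$ unless $m=j(j+1)$, equivalently unless $4m+1=(2j+1)^2$ is an odd square, in which case $b(2m)\equiv(-1)^j(2j+1)\pmod4$.

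With this characterization the first assertion \eqref{Cor 2} is immediate. I would set $m=pn+\ell$, so that $4m+1\equiv 4\ell+1\pmod p$. If $4m+1$ were a perfect square $t^2$, then $t^2\equiv 4\ell+1\pmod p$ would force $\left(\frac{4\ell+1}{p}\right)_L\in\{0,1\}$, contradicting the hypothesis $\left(\frac{4\ell+1}{p}\right)_L=-1$. Thus $4m+1$ is not a square and $b\left(2(pn+\ell)\right)\equiv0\pmod4$.

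For \eqref{Cor 2.1} I first note that the stated argument is even: since $p$ is odd, $p^{2k+2}=(p^2)^{k+1}\equiv1\pmod 8$, so $\tfrac{p^{2k+2}-1}{2}$ is even and $2p^{2k+2}n+\tfrac{p^{2k+2}-1}{2}=2M'$ with $M'=p^{2k+2}n+\tfrac{p^{2k+2}-1}{4}\in\bbZ$. A direct computation gives $4M'+1=p^{2k+2}(4n+1)$, so $4M'+1$ is an odd square exactly when $4n+1$ is; therefore the supports of the two sides of \eqref{Cor 2.1} coincide, and off the support the congruence reads $0\equiv0$. On the support I would write $4n+1=(2i+1)^2$, so $b(2n)\equiv(-1)^i(2i+1)$ and $4M'+1=\bigl(p^{k+1}(2i+1)\bigr)^2$, giving $b(2M')\equiv(-1)^j\,p^{k+1}(2i+1)$ with $j=p^{k+1}i+\tfrac{p^{k+1}-1}{2}$. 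Cancelling the unit $2i+1$ modulo $4$ and using $p^{-k-1}p^{k+1}=1$, the claim reduces to the sign identity $(-1)^i\equiv(-1)^{(k+1)(1-p)/2+j}\pmod4$, i.e. to $j-i\equiv(k+1)\tfrac{1-p}{2}\pmod2$.

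The main (and essentially only) obstacle is this last parity bookkeeping. It is handled by observing $j-i\equiv\tfrac{p^{k+1}-1}{2}\pmod2$ and then, from $p^{k+1}-1=(p-1)(1+p+\cdots+p^k)$ together with $1+p+\cdots+p^k\equiv k+1\pmod2$, deducing $\tfrac{p^{k+1}-1}{2}\equiv(k+1)\tfrac{p-1}{2}\equiv(k+1)\tfrac{1-p}{2}\pmod2$. This matches the exponent exactly, completing \eqref{Cor 2.1}; everything outside this sign check is routine series manipulation.
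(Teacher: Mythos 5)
Your proof is correct, but it takes a genuinely different route from the paper's. Both arguments begin with the same reduction of \eqref{Gen P_B 2n} to $\sum_{n\ge0}b(2n)q^n\equiv f_2^3\pmod 4$ (your justification via $f_1^2=f_2+2G$ is a clean way to see this). From there the paper invokes the $p$-dissection of $f(-q)^3$ from \eqref{f_1^3 p d}, extracts the residue class $q^{pn+\ell}$ to get \eqref{Cor 2}, then extracts $q^{p^2n+(p^2-1)/4}$ to obtain a one-step self-similarity and iterates it $k+1$ times to reach \eqref{Cor 2.1}. You instead apply Jacobi's identity \eqref{e2.0.3.3} directly to $f_2^3$ and exploit that the exponents $j(j+1)$ are distinct, obtaining the sharper coefficient-level statement that $b(2m)\equiv 0\pmod 4$ unless $4m+1$ is an odd square, in which case $b(2m)\equiv(-1)^j(2j+1)\pmod4$. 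This buys you \eqref{Cor 2} as an immediate quadratic-residue observation and, more notably, \eqref{Cor 2.1} in closed form without any iteration, via the identity $4M'+1=p^{2k+2}(4n+1)$ and the parity computation $j-i\equiv\frac{p^{k+1}-1}{2}\equiv(k+1)\frac{1-p}{2}\pmod 2$, which I checked and is right. The trade-off is that the paper's dissection template generalizes uniformly to the mod $8$ and mod $36$ theorems (where the relevant series are $\psi(q)$ and $9f_6^3$), whereas your support argument is tailored to the case where the whole series collapses to a single theta function with multiplicity-one exponents; for this particular theorem your version is the more elementary and more informative one.
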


Combining \eqref{Cor 2} and \eqref{Cor 2.1}, we arrive at the following corollary.

\begin{corollary}\label{C1}
    Let $p$ be an odd prime and $1 \leq \ell \leq p - 1$ such that $\left( \frac{4\ell + 1}{p} \right)_L = -1$. Then for all $n \geq 0$ and $k \geq 0$, we have  
\begin{align}\label{c1.1.1.1}
    b\left(2p^{2k+3}n+\frac{(4\ell+1)p^{2k+2}-1}{2}\right) &\equiv 0 \pmod{4}.
\end{align}
\end{corollary}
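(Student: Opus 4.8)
The plan is to deduce the corollary directly by chaining together the two conclusions of Theorem \ref{Thm 1.2}; no new machinery is required. The crucial observation is that in \eqref{Cor 2.1} the prefactor $(-1)^{(k+1)(1-p)/2}\,p^{-k-1}$ is a unit modulo $4$: since $p$ is odd it is coprime to $4$, so $p^{-k-1}$ is well defined mod $4$, and because $1-p$ is even the exponent $(k+1)(1-p)/2$ is an integer, making the sign an honest $\pm 1$. Thus \eqref{Cor 2.1} asserts that $b(2N)$ and $b\!\left(2p^{2k+2}N + (p^{2k+2}-1)/2\right)$ agree modulo $4$ up to multiplication by a unit; in particular one vanishes mod $4$ exactly when the other does.

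First I would apply \eqref{Cor 2.1} with $N = pn + \ell$, which gives
\begin{equation*}
    b\bigl(2(pn+\ell)\bigr) \equiv (-1)^{\frac{(k+1)(1-p)}{2}}\, p^{-k-1}\, b\!\left(2p^{2k+2}(pn+\ell) + \frac{p^{2k+2}-1}{2}\right) \pmod{4}.
\end{equation*}
Under the hypothesis $\left(\frac{4\ell+1}{p}\right)_L = -1$, relation \eqref{Cor 2} applies to the left-hand side, so $b\bigl(2(pn+\ell)\bigr) \equiv 0 \pmod 4$. Multiplying through by the inverse of the unit prefactor then forces
\begin{equation*}
    b\!\left(2p^{2k+2}(pn+\ell) + \frac{p^{2k+2}-1}{2}\right) \equiv 0 \pmod{4}.
\end{equation*}

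The only remaining step is the routine algebraic simplification of the index:
\begin{equation*}
    2p^{2k+2}(pn+\ell) + \frac{p^{2k+2}-1}{2} = 2p^{2k+3}n + 2\ell p^{2k+2} + \frac{p^{2k+2}-1}{2} = 2p^{2k+3}n + \frac{(4\ell+1)p^{2k+2}-1}{2},
\end{equation*}
which reproduces exactly the argument appearing in \eqref{c1.1.1.1}, valid for all $n, k \geq 0$. There is essentially no obstacle to overcome: the entire substance of the statement is carried by Theorem \ref{Thm 1.2}, and the corollary follows from a single substitution. The one mild point that needs care is verifying that the parity-and-power prefactor is genuinely invertible modulo $4$, which holds precisely because $p$ is odd and hence coprime to $4$.
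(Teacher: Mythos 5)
Your proposal is correct and follows exactly the paper's route: the paper likewise obtains the corollary by substituting $pn+\ell$ into the iterated congruence \eqref{Cor 2.1} and then invoking \eqref{Cor 2}, with the same index simplification. Your added remark that the prefactor is a unit modulo $4$ is a sound (if implicit in the paper) justification for transferring the vanishing.
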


\begin{theorem} \label{Thm 1.1}
Let $p$ be an odd prime and $1 \leq \ell \leq p - 1$ such that $\left( \frac{8\ell + 1}{p} \right)_L = -1$. Then for all $n \geq 0$, we have  
\begin{align}\label{Cor 1}
    b\left(4(pn + \ell)\right) &\equiv 0 \pmod{8}.
\end{align}
Furthermore, for all $n \geq 0 $ and $k \geq 0$, we have
\begin{equation}\label{Cor 1.1}
    b(4n) \equiv b\left(4p^{2k+2}n + \frac{p^{2k+2}-1}{2}\right) \pmod{8}.
\end{equation}
\end{theorem}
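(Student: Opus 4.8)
The plan is to reduce the generating function \eqref{Gen P_B 4n} modulo $8$ to a single theta function and then convert both claims into elementary statements about triangular numbers. I would repeatedly use the standard congruences $f_1^8\equiv f_2^4 \pmod 8$ and $f_2^8\equiv f_4^4 \pmod 8$, which follow from $(1-q^n)^2\equiv 1-q^{2n}\pmod 2$ by squaring twice (more generally $f_m^{2^j}\equiv f_{2m}^{2^{j-1}}\pmod{2^j}$), together with the elementary fact that if two power series with constant term $1$ are congruent modulo $8$, then so are their multiplicative inverses. Applying these to \eqref{Gen P_B 4n}, I would write $f_1^9 f_4^4=f_1\cdot(f_1^8 f_4^4)\equiv f_1 f_2^{12}\pmod 8$ and hence
\[
  \sum_{n\ge 0} b(4n)q^n=\frac{f_2^{14}}{f_1^9 f_4^4}\equiv \frac{f_2^{14}}{f_1\,f_2^{12}}=\frac{f_2^2}{f_1}\pmod 8 .
\]

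The key step is then to recognize Gauss's identity $\dfrac{f_2^2}{f_1}=\sum_{n\ge 0} q^{n(n+1)/2}$. Modulo $8$ the series $\sum_{n\ge0} b(4n)q^n$ is therefore supported exactly on the triangular numbers, each such coefficient being $1$; that is, for every $m\ge 0$,
\[
  b(4m)\equiv \begin{cases} 1,& \text{if } 8m+1\text{ is a perfect square},\\ 0,& \text{otherwise},\end{cases}\pmod 8,
\]
using that $m=k(k+1)/2$ if and only if $8m+1=(2k+1)^2$ (note $8m+1$ is odd, so a square value is automatically an odd square). Both assertions of the theorem now become questions about when $8m+1$ is a perfect square.

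For \eqref{Cor 1}, I would take $m=pn+\ell$ and note $8m+1\equiv 8\ell+1\pmod p$. Since $\left(\frac{8\ell+1}{p}\right)_L=-1$, the integer $8m+1$ is a quadratic nonresidue modulo $p$ and in particular not a perfect square, so $m$ is not triangular and $b(4(pn+\ell))\equiv 0\pmod 8$. For \eqref{Cor 1.1}, observe first that $p^2\equiv 1\pmod 8$ for odd $p$, so $\frac{p^{2k+2}-1}{2}\equiv 0\pmod 4$ and $M:=p^{2k+2}n+\frac{p^{2k+2}-1}{8}$ is an integer with $4M=4p^{2k+2}n+\frac{p^{2k+2}-1}{2}$. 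A direct computation gives
\[
  8M+1=8p^{2k+2}n+p^{2k+2}=(p^{k+1})^2(8n+1),
\]
which is a perfect square if and only if $8n+1$ is. Thus $M$ is triangular exactly when $n$ is, and the displayed description of $b(4\,\cdot)\bmod 8$ yields $b(4n)\equiv b\!\left(4p^{2k+2}n+\frac{p^{2k+2}-1}{2}\right)\pmod 8$ for all $k\ge 0$ simultaneously.

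I do not expect a genuine obstacle: the content is concentrated in the modulo-$8$ collapse to the theta series $\sum q^{n(n+1)/2}$, after which everything reduces to the elementary fact that $8m+1$ being a square detects triangular $m$ and behaves multiplicatively under the substitution $m\mapsto p^{2k+2}n+\frac{p^{2k+2}-1}{8}$. The one point requiring care is the handling of the negative exponents $f_1^{-9}$ and $f_4^{-4}$ modulo $8$, i.e.\ justifying that the congruences for $f_1^8$ and $f_4^4$ may be inverted; this is routine for power series with unit constant term but should be recorded explicitly.
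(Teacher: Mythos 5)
Your proof is correct, and the first half coincides with the paper's: both reduce \eqref{Gen P_B 4n} modulo $8$ to $\psi(q)=f_2^2/f_1$ (the paper states this reduction without spelling out the inversion of $f_1^8\equiv f_2^4$ and $f_4^4\equiv f_2^8 \pmod 8$, which you rightly flag as the one point needing explicit justification). Where you diverge is in how the conclusion is extracted from $\psi(q)$. The paper invokes the $p$-dissection \eqref{psi p d} of $\psi(q)$: the quadratic-nonresidue hypothesis on $8\ell+1$ rules out the exponents $\frac{k^2+k}{2}$ and $\frac{p^2-1}{8}$ in the arithmetic progression $pn+\ell$, giving \eqref{Cor 1}, and then \eqref{Cor 1.1} is obtained by extracting the $q^{p^2n+(p^2-1)/8}$ terms to get the single-step congruence \eqref{mod 8 3} and iterating $k+1$ times. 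You instead use Gauss's identity to read off that $b(4m)\bmod 8$ is the indicator of $8m+1$ being a perfect (odd) square, after which \eqref{Cor 1} is immediate from $8(pn+\ell)+1\equiv 8\ell+1\pmod p$ being a nonresidue, and \eqref{Cor 1.1} follows for all $k$ simultaneously from $8M+1=(p^{k+1})^2(8n+1)$ without any induction. The two arguments are equivalent in content — the $p$-dissection is essentially the residue-class bookkeeping of which $m$ satisfy $8m+1=\square$ — but your packaging is more elementary, avoids the dissection lemma entirely, and delivers the full family \eqref{Cor 1.1} in one stroke rather than by iteration.
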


\noindent\textbf{Example:} For $p = 5$, we have $\ell \in \{2, 4\}$. Then for all $n\ge0$, we have 
\begin{align*}
    b(20n + 8) &\equiv  b(20n + 16) \equiv 0 \pmod{8}.
\end{align*}
Also, from \cite[(3.8)]{kaur},
\begin{align*}
    b(20n + 8) &\equiv 
    b(20n + 16) \equiv 0 \pmod{5}.
\end{align*}
Therefore, by combining these above, we obtain  
\begin{align*}
    b(20n + 8) &\equiv 
    b(20n + 16) \equiv 0 \pmod{40}.
\end{align*}

As a result of \eqref{Cor 1} and \eqref{Cor 1.1}, we obtain the following corollary.

\begin{corollary}\label{C2}
    Let $p$ be an odd prime and $1 \leq \ell \leq p - 1$ such that $\left( \frac{8\ell + 1}{p} \right)_L = -1$. Then for all $n \geq 0$ and $k \geq 0$, we have  
\begin{align}\label{c1.1.1.2}
    b\left(4p^{2k+3}n+\frac{(8\ell +1)p^{2k+2}-1}{2}\right) &\equiv 0 \pmod{8}.
\end{align}
\end{corollary}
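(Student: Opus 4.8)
The plan is to obtain this corollary as a purely formal consequence of the two statements comprising Theorem~\ref{Thm 1.1}, so that no fresh generating-function manipulation is required. The key observation is that the target index $4p^{2k+3}n+\frac{(8\ell+1)p^{2k+2}-1}{2}$ is precisely the right-hand argument of the internal transfer identity \eqref{Cor 1.1} once the free variable there is specialized appropriately.

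Concretely, I would start from \eqref{Cor 1.1}, namely $b(4m)\equiv b\!\left(4p^{2k+2}m+\frac{p^{2k+2}-1}{2}\right)\pmod{8}$, and substitute $m=pn+\ell$. A short computation shows
\begin{equation*}
4p^{2k+2}(pn+\ell)+\frac{p^{2k+2}-1}{2}=4p^{2k+3}n+\frac{(8\ell+1)p^{2k+2}-1}{2},
\end{equation*}
since $4\ell p^{2k+2}+\frac{p^{2k+2}-1}{2}=\frac{(8\ell+1)p^{2k+2}-1}{2}$. Hence
\begin{equation*}
b\!\left(4p^{2k+3}n+\frac{(8\ell+1)p^{2k+2}-1}{2}\right)\equiv b\bigl(4(pn+\ell)\bigr)\pmod{8}.
\end{equation*}

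It then remains to invoke \eqref{Cor 1}: under the hypothesis $\left(\frac{8\ell+1}{p}\right)_L=-1$ with $1\le\ell\le p-1$, that identity gives $b(4(pn+\ell))\equiv0\pmod{8}$ for every $n\ge0$. Combining the last two displays yields the claimed congruence for all $n,k\ge0$. There is essentially no analytic obstacle at this stage; the only point requiring care is the index bookkeeping—verifying that the specialization $m=pn+\ell$ converts the generic index of \eqref{Cor 1.1} into exactly the target index, and confirming that the Legendre-symbol condition $\left(\frac{8\ell+1}{p}\right)_L=-1$ is precisely the one demanded by \eqref{Cor 1}. All of the substantive content (the derivation of the periodicity relation \eqref{Cor 1.1} and of the vanishing \eqref{Cor 1}) has already been established in the proof of Theorem~\ref{Thm 1.1}.
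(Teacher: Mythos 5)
Your proposal is correct and is exactly the paper's argument: the paper likewise proves Corollary \ref{C2} by substituting $pn+\ell$ for the free variable in \eqref{Cor 1.1} and then invoking the vanishing \eqref{Cor 1}. The index computation $4p^{2k+2}(pn+\ell)+\frac{p^{2k+2}-1}{2}=4p^{2k+3}n+\frac{(8\ell+1)p^{2k+2}-1}{2}$ checks out, so nothing is missing.
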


\begin{theorem}\label{Thm 1.3}
Let $p\geq 5$ be a prime and $1 \leq \ell \leq p - 1$ such that $\left( \frac{12\ell + 9}{p} \right)_L = -1$. Then for all $n \geq 0$, we have  
\begin{align} \label{Cor 3}
    b\left(6(pn + \ell)+4\right) &\equiv 0 \pmod{36}.
\end{align}
Furthermore, for all $n \geq 0 $ and $k \geq 0$, we have
\begin{equation} \label{Cor 3.1}
    b(6n+4) \equiv 9^{-k-1}(-1)^{\frac{(k+1)(1-p)}{2}}p^{-k-1} b\left(6p^{2k+2}n + \frac{9p^{2k+2}-1}{2}\right) \pmod{36}.
\end{equation}
\end{theorem}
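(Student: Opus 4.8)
The plan is to work entirely from the known generating function \eqref{Gen P_B 6n+4}, namely $\sum_{n\geq0}b(6n+4)q^n = 9\,f_2^4 f_3^4 f_6 / f_1^8$, and to read off its coefficients modulo $36$. Since the prefactor $9$ is already present, everything reduces to controlling the eta-quotient $f_2^4 f_3^4 f_6/f_1^8$ modulo $4$. First I would make the standard $2$-adic reduction: from $f_1^2\equiv f_2\pmod2$ one gets $f_1^8\equiv f_2^4\pmod 8$, and since both are invertible power series this yields $f_2^4/f_1^8\equiv1\pmod4$. Hence
\[
\sum_{n\geq0}b(6n+4)q^n\;\equiv\;9\,f_3^4 f_6\pmod{36}.
\]
Because $f_3^4 f_6=\sum_{m\geq0}a(m)q^{3m}$ by the definition \eqref{n1} of $a$, this already gives $b(6n+4)\equiv0\pmod{36}$ whenever $3\nmid n$, and $b(18m+4)\equiv 9\,a(m)\pmod{36}$. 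Notably, this route needs none of the Hecke data $r,s,\nu(p),g(p)$; for a modulus of $4$ on the reduced series the elementary theta reduction suffices.

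Next I would pin down $a(m)$ modulo $4$ explicitly. Using $f_1^4\equiv f_2^2\pmod4$ gives $f_1^4 f_2\equiv f_2^3\pmod4$, and Jacobi's identity $f_2^3=\sum_{j\geq0}(-1)^j(2j+1)q^{j(j+1)}$ then shows
\[
a(m)\equiv\begin{cases}(-1)^j(2j+1)\pmod4,&\text{if }m=j(j+1),\\0\pmod4,&\text{otherwise.}\end{cases}
\]
The crucial reformulation, since $2j+1$ is always odd, is that $a(m)\not\equiv0\pmod4$ exactly when $4m+1$ is a perfect square. This single fact drives both halves of the theorem.

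For \eqref{Cor 3} I would combine $b(18m+4)\equiv9a(m)\pmod{36}$ with the above: $b(6(pn+\ell)+4)\equiv0\pmod{36}$ unless $pn+\ell=3J(J+1)$ for some $J$ (i.e. unless $(pn+\ell)/3$ has the form $J(J+1)$). Reducing modulo $p$ and completing the square turns $3J(J+1)\equiv\ell$ into $(6J+3)^2\equiv12\ell+9\pmod p$, which is unsolvable precisely when $12\ell+9$ is a quadratic nonresidue. The hypothesis $\left(\frac{12\ell+9}{p}\right)_L=-1$ therefore forbids $pn+\ell$ from ever equalling $3J(J+1)$, giving \eqref{Cor 3}.

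For \eqref{Cor 3.1} I would first dispose of $3\nmid n$, where $b(6n+4)\equiv0$ and the shifted coefficient is likewise $\equiv0\pmod{36}$. For $3\mid n$, writing $n=3m$ and cancelling the common factor $9$, the claim modulo $36$ becomes a congruence modulo $4$ in which $9^{-k-1}\equiv1$, reducing to the Hecke-type relation
\[
a(m)\equiv(-1)^{\frac{(k+1)(1-p)}{2}}\,p^{-k-1}\,a\!\left(p^{2k+2}m+\tfrac{p^{2k+2}-1}{4}\right)\pmod4,\qquad k\geq0.
\]
The identity $4\!\left(p^{2k+2}m+\tfrac{p^{2k+2}-1}{4}\right)+1=p^{2(k+1)}(4m+1)$ shows the shifted argument is of the form $J(J+1)$ iff $m=j(j+1)$, and in that case forces $2J+1=p^{k+1}(2j+1)$, so the factors $p^{\pm(k+1)}$ cancel exactly. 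The main obstacle, and the only delicate point, is the sign: one must verify $\frac{(k+1)(1-p)}{2}+J\equiv j\pmod2$. This follows from $J-j=(2j+1)\frac{p^{k+1}-1}{2}$ together with $\frac{p^{k+1}-1}{2}\equiv(k+1)\frac{p-1}{2}\pmod2$, the latter because $\frac{p^{k+1}-1}{2}=\frac{p-1}{2}\sum_{i=0}^{k}p^i$ and the sum has the parity of $k+1$. With the sign confirmed the relation holds for every $k$ directly, so no induction on $k$ is needed.
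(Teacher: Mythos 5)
Your proof is correct, and while the first half runs parallel to the paper's, the second half takes a genuinely different and more direct route. For \eqref{Cor 3} the two arguments are the same fact in different clothing: the paper reduces \eqref{Gen P_B 6n+4} to $9f_6^3\pmod{36}$ and invokes the $p$-dissection of $f(-q)^3$, whereas you reduce to $9f_3^4f_6=9\sum_{m}a(m)q^{3m}$ and read off from $f_1^4f_2\equiv f_2^3\pmod 4$ that $a(m)$ is a unit modulo $4$ exactly when $4m+1$ is a perfect square; either way the hypothesis $\left(\frac{12\ell+9}{p}\right)_L=-1$ forbids $12(pn+\ell)+9$ from being a square. The real divergence is in \eqref{Cor 3.1}: the paper extracts the terms $q^{p^2n+3(p^2-1)/4}$ from the dissection to obtain a one-step self-similar congruence and then iterates it $k+1$ times, while you prove the formula for all $k$ at once from the factorization $4\left(p^{2k+2}m+\tfrac{p^{2k+2}-1}{4}\right)+1=p^{2k+2}(4m+1)$, with $2J+1=p^{k+1}(2j+1)$ accounting for the factor $p^{-k-1}$ and your parity computation $J-j\equiv(k+1)\tfrac{p-1}{2}\pmod 2$ accounting for the sign $(-1)^{(k+1)(1-p)/2}$. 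Your route avoids both the dissection lemma and the iteration, and it makes transparent exactly where the sign and the power of $p$ come from; the paper's route has the advantage of being uniform with its proofs of Theorems \ref{Thm 1.2} and \ref{Thm 1.1}. One point the two treatments share and that you handle explicitly: since $9$ is not invertible modulo $36$, \eqref{Cor 3.1} must be read by cancelling the common factor $9$ from both sides and interpreting $9^{-k-1}$ as $1$ modulo $4$, which is the only sensible reading of the statement.
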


The following corollary follows from \eqref{Cor 3} and \eqref{Cor 3.1}.

\begin{corollary}\label{cor last}
Let $p\geq 5$ be a prime and $1 \leq \ell \leq p - 1$ such that $\left( \frac{12\ell + 9}{p} \right)_L = -1$. Then for all $n \geq 0$ and $k \geq 0$, we have  
\begin{align}\label{c1.1.1.3}
    b\left(6p^{2k+3}n + \frac{3p^{2k+2}(4\ell+3)-1}{2}\right) &\equiv 0 \pmod{36}.
\end{align}
\end{corollary}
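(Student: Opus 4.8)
The plan is to obtain \eqref{c1.1.1.3} by feeding the base congruence \eqref{Cor 3} into the lifting recurrence \eqref{Cor 3.1}, both of which are supplied by Theorem \ref{Thm 1.3} and may be taken as given. Concretely, I would start from \eqref{Cor 3.1} and substitute $n \mapsto pn+\ell$, so that its left-hand side becomes $b\bigl(6(pn+\ell)+4\bigr)$ — exactly the quantity that \eqref{Cor 3} annihilates modulo $36$ under the standing hypothesis $\left(\tfrac{12\ell+9}{p}\right)_L=-1$. No new generating-function manipulation is required; the corollary is a pure combination of the two displayed congruences.

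The next step is to simplify the argument of $b$ produced on the right-hand side by this substitution. One computes
\begin{align*}
    6p^{2k+2}(pn+\ell)+\frac{9p^{2k+2}-1}{2}
    &= 6p^{2k+3}n + \frac{12p^{2k+2}\ell + 9p^{2k+2}-1}{2}\\
    &= 6p^{2k+3}n + \frac{3p^{2k+2}(4\ell+3)-1}{2},
\end{align*}
which is precisely the index appearing in \eqref{c1.1.1.3}. Hence the substituted recurrence reads
\begin{equation*}
    b\bigl(6(pn+\ell)+4\bigr) \equiv 9^{-k-1}(-1)^{\frac{(k+1)(1-p)}{2}}p^{-k-1}\, b\left(6p^{2k+3}n + \frac{3p^{2k+2}(4\ell+3)-1}{2}\right) \pmod{36},
\end{equation*}
and by \eqref{Cor 3} the left-hand side is $\equiv 0 \pmod{36}$, so the product of the scalar multiplier with the target coefficient vanishes modulo $36$.

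It remains to transfer this vanishing across the multiplier and conclude that the surviving coefficient is itself divisible by $36$, and this is the only genuinely delicate point: the factor $9^{-k-1}$ is not a unit modulo $36$, since $\gcd(9,36)=9$. The way I would resolve it is to recall from \eqref{Gen P_B 6n+4} that $9 \mid b(6m+4)$ for every $m$, so each congruence modulo $36$ among such coefficients is equivalent to a congruence modulo $4$ among the quotients $b(6m+4)/9$. After extracting this common factor of $9$ from both $b$-values, one factor of $9$ combines with $9^{-k-1}$ to leave $9^{-k}$, and since $9\equiv1\pmod4$ this power disappears while $p$ (being odd) is invertible modulo $4$; the effective multiplier thus reduces to the unit $(-1)^{\frac{(k+1)(1-p)}{2}}p^{-k-1}$ in $\mathbb{Z}/4\mathbb{Z}$. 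Cancelling it turns the vanishing of the left-hand side into
\begin{equation*}
    b\left(6p^{2k+3}n + \frac{3p^{2k+2}(4\ell+3)-1}{2}\right) \equiv 0 \pmod{36}
\end{equation*}
for all $n,k\ge0$, completing the argument. The expected obstacle is entirely this bookkeeping of the factor $9$; once it is handled by passing to the mod-$4$ statement about the quotient coefficients, the rest is the single substitution carried out above.
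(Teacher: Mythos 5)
Your proposal is correct and follows exactly the paper's own route: substitute $n\mapsto pn+\ell$ into the iterated congruence \eqref{Cor 3.1}, simplify the index to $6p^{2k+3}n+\tfrac{3p^{2k+2}(4\ell+3)-1}{2}$, and invoke \eqref{Cor 3}. Your extra care in interpreting the factor $9^{-k-1}$ (which is not a unit modulo $36$) by passing to the quotients $b(6m+4)/9$ modulo $4$ addresses a point the paper's one-line proof glosses over, and is a welcome clarification rather than a deviation.
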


\begin{theorem}\label{Theorem f_1^7}
For all $n, \alpha \geq 0$, we have
    \begin{align}
        &b\left(12\left(5^{12\alpha + 11}\left(5n + \beta_1 - 4\right) + \frac{7(5^{12\alpha + 12} - 1)}{24}\right)+3\right) \nonumber \\
&\equiv b\left(12\left(7^{18\alpha + 17}\left(7n + \beta_2 - 6\right) + \frac{7(7^{18\alpha + 18} - 1)}{24}\right)+3\right) \nonumber \\
&\equiv b\left(12\left(13^{18\alpha + 17}\left(13n + \beta_3 - 12\right) + \frac{7(13^{18\alpha + 18} - 1)}{24}\right)+3\right) \equiv 0 \pmod{54}, \label{f1_3}
    \end{align}
where $0 \leq \beta_1 \leq 4$, $\beta_1 \neq 1$, $0 \leq \beta_2 \leq 6$, $\beta_2 \neq 2$, and $0 \leq \beta_3 \leq 12$, $\beta_3 \neq 3$. Moreover, for all $\alpha \geq 0$,
\begin{align}
b\left(12\left( \frac{295 \times 5^{12\alpha} - 7}{24} \right) +3\right) &\equiv b\left(12\left( \frac{295 \times 7^{18\alpha} - 7}{24} \right) +3\right) \nonumber\\
&\equiv b\left(12\left( \frac{295 \times 13^{18\alpha} - 7}{24} \right) +3\right)\nonumber\\
&\equiv 0 \pmod{54}, \label{f1_4}
\end{align}
\begin{align}\label{f1_5}
    b\left(12\left( \frac{7(5^{12\alpha} - 1)}{24} \right)+3\right) &\equiv b\left(12\left( \frac{7(13^{18\alpha} - 1)}{24} \right)+3\right) \equiv 6 \pmod{54},\\
    \label{f1_6}
b\left(12\left( \frac{7(7^{18\alpha} - 1)}{24} \right)+3\right) &\equiv 6\times(-1)^{\alpha} \pmod{54}.    
\end{align}
\end{theorem}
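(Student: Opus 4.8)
The plan is to collapse the theorem onto a single statement about $c(n):=[q^n]f_1^7$ modulo $9$ and then analyse $c$ through a Newman-type recurrence at the primes $5,7,13$. For the reduction I would start from Proposition~\ref{Thm Gen P_B 3n} and extract from $\sum_m b(3m)q^m=\frac{f_2^7f_3^2}{f_1^6f_4f_6}$ the progression $m\equiv 1\pmod 4$ producing $b(12n+3)$, by a $2$- and then a $4$-dissection, simplifying the resulting eta-quotient with the classical $3$-adic congruences $f_1^3\equiv f_3\pmod 3$ and $f_1^9\equiv f_3^3\pmod 9$ (and their analogues for $f_2$). The goal of this computation is
\[
\sum_{n\ge 0}b(12n+3)\,q^n\equiv 6\,f_1^7\pmod{54}.
\]
Since $12n+3$ is odd, the theorem of Qu, Wang and Yao \cite{quwangyao} already supplies the factor $2$; writing $f_1^7=\sum_n c(n)q^n$, the whole theorem then reduces to computing $c(n)\bmod 9$, because $6c(n)\equiv 0,\,6,\,6(-1)^\alpha\pmod{54}$ exactly when $c(n)\equiv 0,\,1,\,(-1)^\alpha\pmod 9$. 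As a check, $c(0)=1$ reproduces $b(3)\equiv 6\pmod{54}$, i.e.\ \eqref{f1_5}--\eqref{f1_6} at $\alpha=0$.

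Next I would view $\eta^7=q^{7/24}f_1^7=\sum_{N\equiv 7\,(24)}A(N)q^{N/24}$ as a weight $7/2$ form, with $A(24n+7)=c(n)$; every index in \eqref{f1_3}--\eqref{f1_6} has the shape $N=24(\cdot)+7$, the visible $7$ being the order of vanishing of $\eta^7$ (the fixed points are literally $N=7p^{e\alpha}$, with $e=12$ for $p=5$ and $e=18$ for $p=7,13$). Because $f_1^7$ is not a Hecke eigenform, I would not use a naive Hecke relation but the genuine Newman identities, which for each $p\in\{5,7,13\}$ yield a linear recurrence modulo $9$ for $A$ along the tower of exponents of $p$; its order and coefficients I would read off a Maple expansion of $f_1^7$ (which also supplies the base coefficients, e.g.\ $c(7)=41\equiv 5\pmod 9$). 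Solving this recurrence produces an eventually periodic sequence of residues modulo $9$, whose period I expect to be $12$ in the exponent of $5$ and $18$ in the exponents of $7$ and $13$.

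Each index is then located by its $p$-adic valuation and the residue class of its prime-to-$p$ part. The fixed points $N=7p^{e\alpha}$ lie over the base $7$ with $A(7)=c(0)=1$; the periodic sequence returns the value $1$ at the exponents $e\alpha$ for $p=5,13$ and the value $(-1)^\alpha$ at the exponents $18\alpha$ for $p=7$, so after multiplication by $6$ we obtain \eqref{f1_5}--\eqref{f1_6}. The vanishing families read off the zeros of the same periodic patterns: for \eqref{f1_4} the base coefficient is $A(295)=c(12)\equiv 0\pmod 9$ (a single Maple check), which the recurrence carries to the relevant positions of the tower, while for \eqref{f1_3} the conditions $\beta_1\neq 1$, $\beta_2\neq 2$, $\beta_3\neq 3$ single out precisely the residue classes of the cofactor whose sequence vanishes at the exponents $12\alpha+11$ and $18\alpha+17$; multiplication by $6$ then gives $0\pmod{54}$.

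The heart of the argument is the second and third steps: establishing the correct mod-$9$ Newman recurrence for the non-eigenform $f_1^7$, and then proving, class by class, that the periodic residue pattern it generates places its zeros (for \eqref{f1_3}--\eqref{f1_4}) and its values $1$ or $(-1)^\alpha$ (for \eqref{f1_5}--\eqref{f1_6}) in exactly the advertised positions --- in particular that the excluded residue $\beta_i$ is the unique class carrying a nonzero value. Pinning down the recurrence, the periods $12$ and $18$, the alternating sign for $p=7$, and the distinguished residues is a finite Maple computation; by contrast the opening dissection and the closing passage through the factor $6$ are routine.
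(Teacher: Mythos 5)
Your proposal is correct and follows essentially the same route as the paper: both reduce the theorem to the single congruence $\sum_{n\ge0} b(12n+3)q^n \equiv 6f_1^7 \pmod{54}$ (obtained by dissecting $\sum_{n\ge0} b(3n)q^n = f_2^7f_3^2/(f_1^6f_4f_6)$ and using $f_1^9\equiv f_3^3\pmod 9$) and then analyse $a_7(n)\bmod 9$ via Newman-type recurrences at $p=5,7,13$. The only difference is one of packaging: the paper imports the second step wholesale from Xia and Xin's Corollary 1.4, with the ranks $r_1(7,9)=5$ and $r_2(7,9)=r_3(7,9)=8$ read off their appendix tables, whereas you would rebuild that machinery by hand --- but the periods $12$ and $18$, the alternating sign $(-1)^\alpha$ at $p=7$, the base value $a_7(12)\equiv 0\pmod 9$, and the excluded residues $\beta_1\neq1$, $\beta_2\neq2$, $\beta_3\neq3$ that you anticipate all agree with what that citation delivers.
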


\begin{theorem}\label{nt1}
    Let $p \geq 3$ be a prime, $\nu(p)$ and $g(p)$ be as defined in \eqref{n2} and \eqref{n3}, respectively.
\begin{enumerate}[(i)]
  \item \textit{For all $n, k \geq 0$, if $p \nmid n$, then we have}
  \begin{equation}
    b\left( 18p^{2{\nu(p)(k+1)} -1}n +\frac{9p^{2{\nu(p)(k+1)}}-1}{2} \right) \equiv 0 \pmod{72}.\label{newm1}
  \end{equation}  
  \item \textit{For all $k \geq 0$, we have}
  \begin{equation}
    b\left( 18p^{2{\nu(p)k}} +\frac{9p^{2{\nu(p)k}}-1}{2} \right) \equiv 36g(p)^k \pmod{72}.\label{newm2}
  \end{equation}
\end{enumerate}
\end{theorem}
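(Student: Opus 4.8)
The plan is to convert both \eqref{newm1} and \eqref{newm2} into statements about the coefficients $a(n)$ of $f_1^4f_2$ from \eqref{n1} modulo $8$, and then to run the Newman-type recurrence encoded in $r(p)$ and $s(p)$. First I would produce a clean generating function modulo $72$ out of \eqref{Gen P_B 6n+4}. Since $f_1^8\equiv f_2^4\pmod 8$, the factor $f_2^4$ cancels $f_1^8$ in $9\,f_2^4f_3^4f_6/f_1^8$, and as $f_3^4f_6$ is $f_1^4f_2$ with $q\mapsto q^3$, this gives
\[
\sum_{n\ge0}b(6n+4)q^n\equiv 9\,f_3^4f_6=9\sum_{m\ge0}a(m)q^{3m}\pmod{72}.
\]
Reading off coefficients yields the key reduction $b(18m+4)\equiv 9\,a(m)\pmod{72}$ (and, as a byproduct, $b(6n+4)\equiv0\pmod{72}$ when $3\nmid n$). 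A direct computation, using $8\mid p^{2t}-1$ for odd $p$, shows that the argument in \eqref{newm1} equals $18m''+4$ with $m''=p^{2\nu(p)(k+1)-1}n+\tfrac{p^{2\nu(p)(k+1)}-1}{4}$, and that the argument in \eqref{newm2} equals $18m'+4$ with $m'=\tfrac{5p^{2\nu(p)k}-1}{4}$. Hence it remains only to evaluate $a(m'')$ and $a(m')$ modulo $8$.

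Next I would invoke the Hecke/Newman recurrence for $a(n)$ — the one whose eigenvalue is $r(p)$ and whose companion parameter is $s(p)=p^3$, as reflected in \eqref{n4}. It is cleanest in the natural index $4n+1$ attached to $f_1^4f_2=q^{-1/4}\eta(\tau)^4\eta(2\tau)$, where the $a$-values along a $p$-power tower $\{4n+1=p^{2j}c\}$ satisfy $x_{j+1}=r\,x_j-s\,x_{j-1}$ for $j\ge1$, with a base step dictated by $r(p)$ and the Legendre symbol of \eqref{n4}. The decisive point is that the two indices sit at opposite parities of the exponent of $p$: one checks that $4m''+1=p^{2\nu(p)(k+1)-1}(4n+p)$ is an \emph{odd} power of $p$ times a unit (using $p\nmid n$), while $4m'+1=5\,p^{2\nu(p)k}$ carries an \emph{even} power. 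Writing $e_j$ for the Lucas-type sequence $e_0=1,\ e_1=r,\ e_{j+1}=re_j-se_{j-1}$, the odd-power branch yields $a(m'')=(\text{integer})\cdot e_{\nu(p)(k+1)-1}$, whereas the even-power branch is pinned down by the starting value $a(1)=-4\equiv4\pmod 8$.

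The two parts then close as follows. For \eqref{newm1} it suffices to show $e_{\nu(p)m-1}\equiv0\pmod 8$ for every $m\ge1$, which is exactly the content of \eqref{n2}; I would verify this by the finite mod-$8$ analysis of the recurrence (using that $r$ is even — since $\tfrac{p^2-1}{4}$ is pronic, $a(\tfrac{p^2-1}4)$ is odd — and $s=p^3$ is odd), splitting into the three cases. When $r\equiv0\pmod 8$ the recurrence decouples into $e_{j+1}\equiv -s\,e_{j-1}$, so all odd-indexed $e_j$ vanish mod $8$; when $r\equiv4\pmod 8$ or $r\equiv2\pmod 4$ one checks the pair $(e_{\nu(p)-1},e_{\nu(p)})$ returns to $(0,1)\pmod 8$, making $e_j\bmod 8$ periodic of period $\nu(p)$ and hence $e_{\nu(p)m-1}\equiv e_{\nu(p)-1}\equiv0$. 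For \eqref{newm2}, the same parity bookkeeping shows every $x_j$ remains $\equiv4\pmod 8$ along the even-power tower, so $b(18m'+4)\equiv9\cdot4\equiv36\pmod{72}$; since $g(p)$ is odd in all three cases of \eqref{n3}, one has $36g(p)^k\equiv36\pmod{72}$, matching the claimed value.

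The main obstacle is establishing and correctly bookkeeping the recurrence for the mixed eta-quotient $f_1^4f_2$: one must set up the (half-integer weight $5/2$) $T_{p^2}$ Hecke relation, identify its eigenvalue with $r(p)$ and its twist with the $\left(\tfrac{\cdot}{p}\right)_L$-term of \eqref{n4}, and cleanly separate the odd- and even-power-of-$p$ branches so that the starting datum in \eqref{newm1} drops out (forcing the answer $0$) while that in \eqref{newm2} survives as $4$. After that, the remaining work is the finite case-by-case verification modulo $8$ confirming that the piecewise definitions of $\nu(p)$ and $g(p)$ are precisely those forcing the stated congruences, a computation for which Maple is well suited.
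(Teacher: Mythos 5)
Your proposal follows essentially the same route as the paper: reduce via $f_1^8\equiv f_2^4\pmod 8$ to $b(18m+4)\equiv 9a(m)\pmod{72}$, then run Newman's recurrence for $a$ along $p$-power towers, with your Lucas sequence $e_j$ being exactly the paper's combination $rU_j(p)+V_j(p)$, part (i) closing because $e_{\nu(p)m-1}\equiv 0\pmod 8$ and $a(n/p)=0$ for $p\nmid n$, and part (ii) closing from $a(1)=-4$. The only imprecision is your claim that $(e_{\nu(p)-1},e_{\nu(p)})$ returns to $(0,1)\pmod 8$ — in the $\nu(p)=2$ case one gets $(0,-s)$ with $-s$ merely odd — but since only $(0,\text{unit})$ is needed for the induction (and $g(p)$ is odd, so $36g(p)^k\equiv 36$), this does not affect the argument.
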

\noindent \textbf{Example.} Taking $p = 3$ in \eqref{newm1} and \eqref{newm2}, we find $r = 4$, $s = 27$, $\nu(3) = 4$ and $g(3) \equiv 1 \pmod{8}$. Thus, for all $n, k \geq 0$, if $3 \nmid n$, then
\begin{align*}
    b\left(18 \times 3^{8k+7}n + \frac{3^{8k+10}-1}{2} \right) & \equiv 0 \pmod{72},\\
    b\left(18 \times 3^{8k} + \frac{3^{8k+2}-1}{2} \right) & \equiv 36 \pmod{72}.
\end{align*}

\begin{theorem}\label{nt3}
   Let \( p \geq 3 \) be a prime and $\nu(p)$ be as defined in \eqref{n2}. For all \( n, k \geq 0 \), we have
    \begin{equation}
    b\left( 108p^{2{\nu(p)k}}n +\frac{153p^{2{\nu(p)k}}-1}{2} \right) \equiv 0 \pmod{72}.\label{newm2.0}
  \end{equation}
\end{theorem}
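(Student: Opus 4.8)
The plan is to transfer the statement to the coefficients $a(n)$ of $f_1^4f_2$ from \eqref{n1}, for which the $\nu(p)$-machinery underlying Theorem~\ref{nt1} is already in place, and then to collapse everything modulo $8$ onto a single arithmetic progression. First I would start from \eqref{Gen P_B 6n+4}, $\sum_{m\ge0}b(6m+4)q^m=9\,f_2^4f_3^4f_6/f_1^8$. The standard $2$-adic fact $f_1^8\equiv f_2^4\pmod8$ (from $f_1^2=f_2+2A$, so $f_1^4=f_2^2+4B$ and $f_1^8=f_2^4+8f_2^2B+16B^2$) gives $1/f_1^8\equiv1/f_2^4\pmod8$, whence $f_2^4f_3^4f_6/f_1^8\equiv f_3^4f_6\pmod8$. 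Replacing $q$ by $q^3$ in \eqref{n1} identifies $f_3^4f_6=\sum_{j\ge0}a(j)q^{3j}$, so that
\[
b(18j+4)\equiv 9\,a(j)\pmod{72},\qquad b(6m+4)\equiv0\pmod{72}\ \text{ if }3\nmid m.
\]
Since $9a(j)\equiv0\pmod{72}\iff a(j)\equiv0\pmod8$, the whole theorem becomes a statement about $a$ modulo $8$.

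Next I would do the index bookkeeping. Writing $P=p^{2\nu(p)k}$ (so $P\equiv1\pmod8$, and $P\equiv1\pmod{24}$ when $p\ge5$), one checks that the argument of $b$ in \eqref{newm2.0} equals $18\ell+4$ with
\[
\ell=6Pn+\frac{17P-1}{4}\in\bbZ,\qquad 4\ell+1=P(24n+17).
\]
By the previous paragraph the theorem is therefore equivalent to proving $a(\ell)\equiv0\pmod8$ whenever the ``rank'' $4\ell+1$ equals $P(24n+17)$.

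The engine is a base case together with the Newman-type $p^2$-recurrence for $a(n)$ already used for Theorem~\ref{nt1}. I would first establish the base case $a(6m+4)\equiv0\pmod8$ for all $m\ge0$ (this is exactly $k=0$): modulo $2$ one has $f_1^4f_2\equiv f_1^6\equiv\sum_{j}q^{j(j+1)}\pmod2$, so $a(6m+4)$ is even because $4(6m+4)+1\equiv17\pmod{24}$ is never a square, and I would sharpen this to $8\mid a(6m+4)$ by a $2$-dissection of $f_1^4f_2$. With the recurrence in hand — in the normalization of \eqref{n4}, $a$ along a $p$-power tower over a base coprime to $p$ obeys an integer-coefficient linear recursion governed by $r(p)$ and $s=p^3$ — I would factor $4\ell+1=p^wu$ with $p\nmid u$ and $w=2\nu(p)k+\mathrm{ord}_p(24n+17)$. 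The crucial point is that the bottom rung of the tower is $\equiv17\pmod{24}$: indeed $24n+17\equiv17\pmod{24}$, and since $p^2\equiv1\pmod{24}$ for $p\ge5$ while $\mathrm{ord}_3(24n+17)=0$ for $p=3$, removing or adjoining an even power of $p$ does not change the class, so the rung $u$ (when $w$ is even) or $pu$ (when $w$ is odd) is of the shape $4(6m+4)+1$. Its $a$-value is then $a(6m+4)\equiv0\pmod8$ by the base case, and the integer recurrence propagates $8\mid a$ up the tower, yielding $a(\ell)\equiv0\pmod8$ and hence \eqref{newm2.0}.

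The hard part is twofold. The delicate input is the mod-$8$ base case $a(6m+4)\equiv0\pmod8$: the mod-$2$ vanishing is immediate, but upgrading it to modulo $8$ demands genuine control of the $2$-adic behaviour of the weight-$5/2$ eta-quotient $f_1^4f_2$ on a residue class, which is where a careful dissection (or a Maple-assisted verification of the underlying $\eta$-quotient identity) is needed. The bookkeeping obstacle is guaranteeing that the $p$-power descent always lands in the progression $4n+1\equiv17\pmod{24}$; this is precisely what forces the even exponent $2\nu(p)k$ and explains why the statement is phrased with $\nu(p)$ rather than an arbitrary power of $p$.
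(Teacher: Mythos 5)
Your proposal is correct and follows essentially the same route as the paper: rewrite the target index as $18\ell+4$ with $\ell=p^{2\nu(p)k}(6n+4)+\frac{p^{2\nu(p)k}-1}{4}$, reduce via $b(18\ell+4)\equiv 9a(\ell)\pmod{72}$ to showing $a(\ell)\equiv0\pmod 8$, and deduce this from the base case $a(6m+4)\equiv0\pmod 8$ together with Newman-recurrence propagation along the $p$-power tower --- except that the paper simply plugs $n\mapsto 6n+4$ into the already-established congruence \eqref{l0.2.1.25}, so your factorization of $4\ell+1=p^{w}u$ and climb from the bottom rung is a (correct but unnecessary) re-derivation of machinery already in place from the proof of Theorem \ref{nt1}. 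The one input you leave unexecuted, the base case $a(6m+4)\equiv0\pmod 8$, is also the paper's only nontrivial new input, and the paper likewise settles it by a computational verification (Radu's algorithm) rather than by the $2$-dissection you sketch.
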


\begin{theorem}\label{nt2}
Suppose that $m$ is a positive integer such that $a(m) \equiv 0 \pmod{8}$, where $a(m)$ is defined in \eqref{n1}. Let $4m + 1 = \prod_{i=1}^{u} h_i \prod_{j=1}^{v} g_j^{\alpha_j}$ with each $\alpha_j \geq 2$ be the prime factorization of $4m + 1$. Then for all $n \geq 1$, we have
\begin{equation}\label{ez2}
b\left( 18mn^2+\frac{9n^2-1}{2} \right)\equiv 0 \pmod{72},
\end{equation}
where $\left(n, 2 \prod_{j=1}^{v} g_j^{\alpha_j} \right) = 1$.
\end{theorem}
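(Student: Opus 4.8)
The plan is to transfer the statement about $b$ to a divisibility statement about the coefficients $a(n)$ of $f_1^4f_2$ from \eqref{n1}, and then exploit the Hecke/Newman multiplicativity that underlies the quantities $r,s$ in \eqref{n4}. I begin by reducing modulo the ``$8$-part'' of $72$. From \eqref{Gen P_B 6n+4} we have $\sum_{n\ge0}b(6n+4)q^n = 9\,f_2^4f_3^4f_6/f_1^8$, and squaring $f_1^2\equiv f_2\pmod2$ twice gives $f_1^8\equiv f_2^4\pmod 8$. Since $1/f_1^8$ and $1/f_2^4$ are integral power series with constant term $1$, this yields $f_2^4f_3^4f_6/f_1^8\equiv f_3^4f_6\pmod 8$, hence
\begin{equation*}
\sum_{n\ge0}b(6n+4)q^n\equiv 9\,f_3^4f_6=9\sum_{N\ge0}a(N)q^{3N}\pmod{72},
\end{equation*}
so that $b(18N+4)\equiv 9\,a(N)\pmod{72}$ for all $N\ge0$. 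A short computation shows that, for odd $n$, $18mn^2+\tfrac{9n^2-1}{2}=18N+4$ with $N=\tfrac{(4m+1)n^2-1}{4}\in\mathbb Z$ and $4N+1=(4m+1)n^2$; thus the theorem is equivalent to $a\!\left(\tfrac{(4m+1)n^2-1}{4}\right)\equiv 0\pmod 8$ whenever $\gcd\!\big(n,2\prod_j g_j^{\alpha_j}\big)=1$.

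To analyze this, set $c(\Delta):=a\!\big(\tfrac{\Delta-1}{4}\big)$ for $\Delta\equiv1\pmod4$ and $c(\Delta):=0$ otherwise, so that with $D:=4m+1$ the hypothesis reads $c(D)=a(m)\equiv0\pmod8$ and the target is $c(Dn^2)\equiv0\pmod8$. The key input is the Newman-type recurrence for $f_1^4f_2$ behind \eqref{n4}: for an odd prime $p$ and $\Delta\equiv1\pmod4$,
\begin{equation*}
c(p^2\Delta)=\Big(r(p)-(-1)^{\frac{p-1}{2}}p\big(\tfrac{\Delta}{p}\big)_L\Big)c(\Delta)-s\,c(\Delta/p^2),
\end{equation*}
with $r,s$ as in \eqref{n4}. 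Only its formal features matter here: the coefficients $r(p)$, $p(\tfrac{\Delta}{p})_L$ and $s=p^3$ are integers, the Legendre term vanishes when $p\mid\Delta$, and $c(\Delta/p^2)=0$ unless $p^2\mid\Delta$. Hence $c(p^2\Delta)$ is always an explicit $\mathbb Z$-linear combination of $c(\Delta)$ and $c(\Delta/p^2)$, so divisibility by $8$ propagates through it regardless of the exact sign or character in the Legendre term.

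I would then run a strong induction on $n$ over the set of integers coprime to $2\prod_j g_j^{\alpha_j}$. The base case $n=1$ is the hypothesis $c(D)\equiv0\pmod8$. For $n>1$, choose a prime $p\mid n$; validity forces $p$ odd and $p\ne g_j$, so either $p\nmid D$ or $p\Vert D$ (the only primes dividing $D$ are the $h_i$, to the first power, and the excluded $g_j$). Writing $n=pn_0$ and $\Delta=Dn_0^2$, the recurrence expresses $c(Dn^2)=c(p^2\Delta)$ as an integer combination of $c(\Delta)=c(Dn_0^2)$ and $c(\Delta/p^2)$. The first term is $\equiv0$ by induction. For the second: if $p^2\nmid\Delta$ it is $0$; otherwise $p\mid n_0$, so $\Delta/p^2=D(n_0/p)^2$, again $\equiv0$ by induction. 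The exclusion of the $g_j$ is exactly what prevents $\Delta/p^2$ from ever reducing to $D/g_j^2$, whose value $c(D/g_j^2)=a(\cdots)$ is not assumed divisible by $8$; for the once-occurring primes $h_i$ this dangerous term never arises, since $p\Vert D$ makes $D/p^2$ non-integral.

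The main obstacle is the middle step: pinning down the Newman recurrence with the correct integer coefficients and the stated vanishing properties (in particular verifying $s=p^3$ and that the Legendre contribution drops out precisely when $p\mid\Delta$). Once that relation is available, the induction is routine, and its only delicate point is the case analysis on whether $p\mid D$ and whether $p\mid n_0$ — which is exactly what dictates the hypothesis $\gcd\!\big(n,2\prod_j g_j^{\alpha_j}\big)=1$.
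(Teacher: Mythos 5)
Your proposal is correct and follows essentially the same route as the paper: both reduce the claim to $a\bigl(\tfrac{(4m+1)n^2-1}{4}\bigr)\equiv 0\pmod 8$ via the relation $b(18N+4)\equiv 9a(N)\pmod{72}$, and then induct on the number of prime factors of $n$ using Newman's recurrence \eqref{l0.2.1.6}, with the coprimality hypothesis forcing the backward term $a\bigl((\,\cdot\,-\tfrac{p^2-1}{4})/p^2\bigr)$ to vanish or to be covered by the inductive hypothesis. Your discriminant-indexed reformulation and the strong induction peeling off an arbitrary prime $p\mid n$ is just a cleaner packaging of the paper's case split ($p_{k+1}=p_k$ versus $p_{k+1}>p_k$), and the spurious factor $(-1)^{(p-1)/2}$ in your Legendre coefficient is harmless for exactly the reason you state.
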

\noindent \textbf{Example.} Let $m=31$. Then $a(31) \equiv 0 \pmod{8}$ and  $ 4 \cdot 31 + 1 = 125 = 5^3.$
Hence,  
$$
b\left( \frac{1125n^2 - 1}{2} \right) \equiv 0 \pmod{72},
$$  
where $(n, 250) = 1$.

\section{Preliminaries}\label{sec:prelim}
In this section, we gather several results from elementary $q$-series analysis that will be useful in proving our main results. First, the well known Ramanujan's general theta function $f(a,b)$ \cite[(1.2.1)]{Spirit} is defined by
\begin{equation*}
    f(a,b)=\sum_{n \in \mathbb{Z}}a^{n(n+1)/2}b^{n(n-1)/2}, \quad |ab|<1.
\end{equation*}
Three special cases of $f(a,b)$ are the theta functions $\varphi(q)$, $\psi(q)$ and $f(-q)$, which are given by:
\begin{align*}
    \varphi(q) &:= f(q,q) = \sum_{n \in \mathbb{Z}}q^{n^2} = (-q;q^2)_{\infty}^2(q^2;q^2)_{\infty} = \frac{f_2^5}{f_1^2f_4^2},\\
    \psi(q) &:= f(q,q^3) = \sum_{n \in \mathbb{Z}}q^{n(n+1)/2} = \frac{(q^2;q^2)_{\infty}}{(q;q^2)_{\infty}} = \frac{f_2^2}{f_1},\\
    f(-q)&:=f(-q,-q^2)= \sum_{n \in \mathbb{Z}}(-1)^{n}q^{n(3n-1)/2} = (q;q)_{\infty}=f_1.
\end{align*}
In terms of $f(a,b)$, Jacobi's triple product identity \cite[(1.3.11)]{Spirit} is given by
\begin{equation*}
    f(a,b) = (-a;ab)_{\infty}(-b;ab)_{\infty}(ab;ab)_{\infty}.
\end{equation*}
We need Euler's pentagonal number theorem \cite[Eq. (1.3.18)]{Spirit}:
\begin{equation}\label{e2.0.3.4}
    f_1=\sum_{n \in \mathbb{Z}}(-1)^nq^{n(3n-1)/2}
\end{equation}
and Jacobi's Triple Product identity \cite[Eq. (1.3.24)]{Spirit}:
\begin{equation}\label{e2.0.3.3}
    f_1^3=\sum_{n\geq 0}(-1)^n(2n+1)q^{n(n+1)/2}.
\end{equation}

In order to prove Theorem \ref{Theorem f_1^7}, we need the following:

\begin{definition}
    Let \( m \geq 2 \) be an integer and   $$\pi := \left\{ \{S_1(n)\}_{n=0}^{\infty}, \{S_2(n)\}_{n=0}^{\infty}, \ldots, \{S_\ell(n)\}_{n=0}^{\infty} \right\}$$
be a system of sequences, where \( \{S_i(n)\}_{n=0}^{\infty} \) (\(1 \leq i \leq \ell\)) are integer sequences. We say that \( R_m(\pi) \) is the rank of the system \( \pi \) modulo \( m \) if \( R_m(\pi) \) is the smallest integer such that
$$S_1(R_m(\pi)) \equiv S_2(R_m(\pi)) \equiv \cdots \equiv S_\ell(R_m(\pi)) \equiv 0 \pmod{m}.$$
For instance, set \( m = 2 \) and define
\begin{align*}
    F_1(n+2) & := 2F_1(n+1) + F_1(n),\\
    F_2(n+2) &:= F_2(n+1) - 3F_2(n),
\end{align*}
$n \geq 0$, with \( F_1(0) = F_2(1) = 0 \) and \( F_2(0) = F_1(1) = 1 \). It is easy to check that \( \{F_1(n)\}_{n=0}^{\infty} = \{0, 1, 2, 5, 12, \ldots\} \) and \( \{F_2(n)\}_{n=0}^{\infty} = \{1, 0, -3, -3, 6, \ldots\} \). Therefore, the rank of the system \( \{F_1(n)_{n=0}^\infty, F_2(n)_{n=}^\infty\} \) modulo 2 is 4, namely, 
$$R_2(\{F_1(n)_{n=0}^\infty, F_2(n)_{n=}^\infty\}) = 4.$$
As usual, when \( \pi = \emptyset \), we define \( R_m(\emptyset) = 0 \).

It should be noted that for an integer sequence \( \{S(n)\}_{n=0}^{\infty} \) and for an integer \( m \geq 2 \), there may not exist an integer \( r \) such that \( S(r) \equiv 0 \pmod{m} \). For example, set \( m = 2 \) and \( \{S(n)\}_{n=0}^{\infty} = \{2n + 1\}_{n=0}^{\infty} \), we see that for any \( j \), \( S(j) \not\equiv 0 \pmod{2} \).
\end{definition}

We need a few recurrent sequences. Let $A_k(\alpha)$ and $B_{k,i}(\alpha)$ satisfy the same linear recurrence relation:
\begin{equation*}
     A_k(\alpha) = x(k)A_k(\alpha-1) + y(k)A_k(\alpha-2),
\end{equation*}
$C_k(\alpha)$ and $D_{k,i}(\alpha)$ satisfy the same linear recurrence relation:
\begin{equation*}
    C_k(\alpha) = s(k)C_k(\alpha-1) + t(k)C_k(\alpha-2),
\end{equation*}
$E_k(\alpha)$ and $F_{k,i}(\alpha)$ satisfy the same linear recurrence relation:
\begin{equation*}
    E_k(\alpha) = u(k)E_k(\alpha-1) + v(k)E_k(\alpha-2),
\end{equation*}
where the values of $x(k), y(k), A_k(0), A_k(1), B_{k,i}(0), B_{k,i}(1)$; $s(k), t(k), C_k(0), C_k(1), D_{k,i}(0), D_{k,i}(1)$; and $u(k), v(k), E_k(0), E_k(1), E_{k,i}(0), E_{k,i}(1)$ can be found in Appendix I, Appendix II, and Appendix III, respectively in \cite{Xia1}.

Let $\{B_{k,i}(\alpha)\}$, $\{D_{k,i}(\alpha)\}$ and $\{F_{k,i}(\alpha)\}$ be defined as above. For $1 \leq k \leq 24$, we always assume that $r_1(k,m)$, $r_2(k,m)$ and $r_3(k,m)$ are the ranks of $\{B_{k,i}(\alpha)\}$, $\{D_{k,i}(\alpha)\}$ and $\{F_{k,i}(\alpha)\}$ modulo $m$ respectively, namely,
$$R_m(\{B_{k,i}(\alpha)\}) = r_1(k,m), \quad R_m(\{D_{k,i}(\alpha)\}) = r_2(k,m), \quad R_m(\{F_{k,i}(\alpha)\}) = r_3(k,m).$$

Suppose that $\displaystyle{\sum_{n \geq 0}} a_k(n)q^n = f_1^k$. Then, the following result holds, as given in \cite[Corollary 1.4]{Xia1}.

\begin{corollary}
Let $m \geq 2$, then for all $n$ and $\alpha$, we have
\begin{align}
a_k \left( 5^{2(r_1(k,m)+1)(\alpha+1)-1} \left(5n + \beta_1 - \left\lfloor \frac{k}{5} \right\rfloor \right) + \frac{k(5^{2(r_1(k,m)+1)(\alpha+1)} - 1)}{24} \right) &\equiv 0 \pmod{m}, \label{f_1 7 1}\\
a_k \left( 7^{2(r_2(k,m)+1)(\alpha+1)-1} \left(7n + \beta_2 - \left\lfloor \frac{2k}{7} \right\rfloor \right) + \frac{k(7^{2(r_2(k,m)+1)(\alpha+1)} - 1)}{24} \right) &\equiv 0 \pmod{m}, \label{f_1 7 2}\\
a_k \left( 13^{2(r_3(k,m)+1)(\alpha+1)-1} \left(13n + \beta_3 - \left\lfloor \frac{7k}{13} \right\rfloor \right) + \frac{k(13^{2(r_3(k,m)+1)(\alpha+1)} - 1)}{24} \right) &\equiv 0 \pmod{m}. \label{f_1 7 3}
\end{align}
where $0 \leq \beta_1 \leq 4$ with $\beta_1 \neq \left\lfloor \frac{k}{5} \right\rfloor$, $0 \leq \beta_2 \leq 6$ with $\beta_2 \neq \left\lfloor \frac{2k}{7} \right\rfloor$ and $0 \leq \beta_3 \leq 12$ with $\beta_3 \neq \left\lfloor \frac{7k}{13} \right\rfloor$. If $a_k(n_0) \equiv 0 \pmod{m}$, then for all $\alpha \geq 0$,
\begin{align}
     a_k \left( \frac{(24n_0 + k)5^{2(r_1(k,m)+1)\alpha} - k}{24} \right)  &\equiv a_k \left( \frac{(24n_0 + k)7^{2(r_2(k,m)+1)\alpha} - k}{24} \right) \nonumber\\
    & \equiv a_k \left( \frac{(24n_0 + k)13^{2(r_3(k,m)+1)\alpha} - k}{24} \right) \equiv 0 \pmod{m}. \label{f_1 7 4}
\end{align}
Moreover, for all $\alpha \geq 0$,
\begin{align}
    a_k\left( \frac{k(5^{2(r_1(k,m)+1)\alpha} - 1)}{24} \right) & \equiv [A_k(r_1(k,m))]^{\alpha} \pmod{m}, \label{f_1 7 5}\\
    a_k\left( \frac{k(7^{2(r_2(k,m)+1)\alpha} - 1)}{24} \right) & \equiv [C_k(r_2(k,m))]^{\alpha} \pmod{m}, \label{f_1 7 6}\\
    a_k\left( \frac{k(13^{2(r_3(k,m)+1)\alpha} - 1)}{24} \right) & \equiv [E_k(r_3(k,m))]^{\alpha} \pmod{m}. \label{f_1 7 7}
\end{align}

\end{corollary}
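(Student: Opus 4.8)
The plan is to read the statement as a fact about the eta-power \(\eta(\tau)^{k}=q^{k/24}f_{1}^{k}\), so that \(a_k(n)\) is the coefficient of \(q^{(24n+k)/24}\), i.e. of \(q^{N/24}\) with \(N=24n+k\equiv k\pmod{24}\). In this normalization the Hecke--Atkin--Lehner operator \(U_{p^{2}}\) acts by \(N\mapsto p^{2}N\), equivalently by \(n\mapsto p^{2}n+\tfrac{k(p^{2}-1)}{24}\); this is precisely the shift occurring in every displayed progression, so the index \(\alpha\) counts applications of \(U_{p^{2}}\). The primes \(p\in\{5,7,13\}\) are singled out because for each of them \(\eta^{k}\) and \(U_{p^{2}}\eta^{k}\) already span a \(U_{p^{2}}\)-stable space, so that the three forms \(\eta^{k},\,U_{p^{2}}\eta^{k},\,U_{p^{2}}^{2}\eta^{k}\) are linearly dependent. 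Thus the whole argument rests on a single operator identity
\[
U_{p^{2}}^{2}\bigl(\eta^{k}\bigr)=x(k)\,U_{p^{2}}\bigl(\eta^{k}\bigr)+y(k)\,\eta^{k},\qquad p\in\{5,7,13\},
\]
with \(x(k),y(k)\) (and the analogous \(s,t\) for \(7\) and \(u,v\) for \(13\)) equal, up to sign, to the trace and determinant of \(U_{p^{2}}\) on that two-dimensional space.

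Granting this identity, I would first extract coefficients. Reading the identity along any fixed residue shows that for each base \(n\) the sequence \(\alpha\mapsto a_k\!\bigl(p^{2\alpha}n+\tfrac{k(p^{2\alpha}-1)}{24}\bigr)\) obeys the order-two recurrence \(b_{\alpha}=x(k)b_{\alpha-1}+y(k)b_{\alpha-2}\); this is the common recurrence satisfied by \(A_k\) and all the \(B_{k,i}\). The operator \(U_{p}\) refines the tower into the \(p\) classes of \(N\bmod p\): the single class with \(p\mid N\) rejoins the even-power tower and carries the surviving eigenvalue, while the other \(p-1\) classes branch off. The excluded residue is exactly this diagonal one, since \(\beta_1=\lfloor k/5\rfloor\) makes the inner factor \(5n+\beta_1-\lfloor k/5\rfloor\) equal to \(5n\), so the odd power \(5^{2(r_1+1)(\alpha+1)-1}\) absorbs the extra factor \(5\) and the progression rejoins the \(U_{25}\)-tower (and likewise for \(7,13\), matching the exclusions \(\beta_2\neq\lfloor 2k/7\rfloor\), \(\beta_3\neq\lfloor 7k/13\rfloor\)). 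I would let \(A_k(\alpha)\) track the diagonal class and \(B_{k,i}(\alpha)\) track the \(p-1\) branching classes, all governed by the same recurrence but with distinct initial data read off from the base dissection.

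Then the congruences are a matter of tracking ranks modulo \(m\). By definition \(r_1(k,m)\) is the first \(\alpha\) at which every branching sequence \(B_{k,i}(\alpha)\) vanishes modulo \(m\); feeding this into the self-similar structure makes all branching coefficients vanish, which is exactly \eqref{f_1 7 1}, and likewise \eqref{f_1 7 2} and \eqref{f_1 7 3} for \(p=7,13\). Equation \eqref{f_1 7 4} is the propagation of a single zero: if \(a_k(n_0)\equiv0\pmod m\), then \(n_0\) sits at the base of its own \(U_{p^{2}}\)-tower, and since the recurrence has vanishing leading data it forces \(a_k\equiv0\) along the whole progression \(\tfrac{(24n_0+k)p^{2(r_i+1)\alpha}-k}{24}\). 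Finally \eqref{f_1 7 5}--\eqref{f_1 7 7} are the diagonal specialization: once the companion coordinates are \(\equiv0\pmod m\) after \(r_1(k,m)\) steps, the transfer matrix of the recurrence acts on the diagonal coordinate modulo \(m\) as multiplication by the scalar \(A_k(r_1(k,m))\), so advancing the \(p\)-exponent by a full block of \(2(r_1+1)\) multiplies the diagonal coefficient by \(A_k(r_1(k,m))\), giving the geometric progression \([A_k(r_1(k,m))]^{\alpha}\) (and \(C_k,E_k\) for \(7,13\)).

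The main obstacle is the operator identity of the first paragraph, proved uniformly in \(k\). Two routes are available: a modular-forms dimension count --- pin down the weight \(k/2\), the level, and the multiplier of \(\eta^{k}\), show the ambient space is small enough that \(\eta^{k},U_{p^{2}}\eta^{k},U_{p^{2}}^{2}\eta^{k}\) must be dependent, and then identify the coefficients \(x(k),y(k)\) with the tabulated values; or a purely elementary theta dissection built from \eqref{e2.0.3.4} and \eqref{e2.0.3.3}, which is entirely feasible for a fixed small \(k\) (such as the \(k=7\) used elsewhere in the paper, where \(\lfloor 7/5\rfloor=1,\ \lfloor 14/7\rfloor=2,\ \lfloor 49/13\rfloor=3\)) but becomes unwieldy in general. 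Establishing this identity for all \(k\), together with the exact formulas for \(x(k),y(k),s(k),t(k),u(k),v(k)\), is where essentially all the real work lies; the residue-class bookkeeping, the isolation of the diagonal class, and the finiteness of the ranks \(r_i(k,m)\) are then routine.
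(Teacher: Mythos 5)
The first thing to note is that the paper contains no proof of this corollary at all: it is imported verbatim from \cite[Corollary 1.4]{Xia1} as a tool for proving Theorem \ref{Theorem f_1^7}, so there is no in-house argument to measure your proposal against. Judged on its own terms, your outline does capture the mechanism behind the result as it is established in the cited source: for $p\in\{5,7,13\}$ the coefficients of $f_1^k$ along the towers $n\mapsto p^2n+\tfrac{k(p^2-1)}{24}$ satisfy a common second-order linear recurrence in the tower index; the diagonal residue class (the one you correctly identify as the excluded $\beta_1=\lfloor k/5\rfloor$, $\beta_2=\lfloor 2k/7\rfloor$, $\beta_3=\lfloor 7k/13\rfloor$) carries the sequences $A_k,C_k,E_k$, the remaining $p-1$ classes carry $B_{k,i},D_{k,i},F_{k,i}$, and the rank $r_i(k,m)$ marks the first index at which all branching sequences vanish modulo $m$, after which the self-similar block structure yields \eqref{f_1 7 1}--\eqref{f_1 7 4} and the geometric behaviour \eqref{f_1 7 5}--\eqref{f_1 7 7}.

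That said, as a proof your proposal has a genuine hole, and you concede it yourself: everything hinges on the identity $U_{p^2}^2(\eta^k)=x(k)\,U_{p^2}(\eta^k)+y(k)\,\eta^k$ with the specific tabulated coefficients $x(k),y(k)$ (and $s(k),t(k)$; $u(k),v(k)$), and this is asserted rather than established. A dimension count for eta powers of generally half-integral weight and nontrivial multiplier is not routine to set up uniformly in $1\le k\le 24$, and in \cite{Xia1} the recurrence is instead extracted from the explicit $5$-, $7$- and $13$-dissections of $f_1$ (the Rogers--Ramanujan dissection and its septic and tridecic analogues), iterated to produce matrices whose entries are precisely the $A_k(\alpha),B_{k,i}(\alpha),\dots$ of the Appendix tables; the values $x(k),y(k),\dots$ fall out of that computation rather than from a trace--determinant identification. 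Until that identity, together with the matching of its coefficients and initial data to the tables defining the ranks, is actually proved, none of \eqref{f_1 7 1}--\eqref{f_1 7 7} follows. The remaining bookkeeping --- isolating the diagonal class, converting the rank $r_i(k,m)$ into the exponents $2(r_i(k,m)+1)(\alpha+1)-1$, and propagating a single zero to obtain \eqref{f_1 7 4} --- is indeed routine once the recurrence is in hand, as you say.
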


The table below shows the initial values required for the proof of Theorem \ref{Theorem f_1^7}.

\begin{table}[h!]
\centering
\small 
\begin{tabular}{|c|l|l|l|}
\hline
\textbf{Dataset} & \textbf{Parameters} & \textbf{Main Values} & \textbf{Subcomponents} \\
\hline
\textbf{$A$-Series} 
& \( x(7) = 66, \) 
& \( A_7(0) = 125, \) 
& \( B_{7,1}(0) = 14,\) \\
 
& \( y(7) = -5^5 \) 
& \( A_7(1) = 5125 \) 
& \(  B_{7,1}(1) = 924 \) \\
\hline
\multirow{2}{*}{\textbf{$C$-Series}} 
& \multirow{2}{*}{\( s(7) = -176, \)} 
& \multirow{2}{*}{\( C_7(0) = 0,\)} 
& \( D_{7,1}(0) = 49, \) \\

& & & \(  D_{7,1}(1) = -8624, \) \\
\multirow{2}{*}{\textbf{}} 
& \multirow{2}{*}{\( t(7) = -7^5 \)} 
& \multirow{2}{*}{\(C_7(1) = -7^5 \)} 
& \( D_{7,2}(0) = 1,\) \\

& & & \( D_{7,2}(1) = -176 \) \\
\hline
\multirow{3}{*}{\textbf{$E$-Series}} 
& \multirow{3}{*}{\( u(7) = -658, \)} 
& \multirow{3}{*}{\( E_7(0) = -13^3,\)} 
& \( F_{7,1}(0) = 0, \) \\

& & & \(  F_{7,1}(1) = 0, \) \\

& & & \( F_{7,2}(0) = 364, \) \\
\multirow{3}{*}{\textbf{}} 
& \multirow{3}{*}{\(  v(7) = -13^5 \)} 
& \multirow{3}{*}{\( E_7(1) = 1074333 \)} 
&  \(  F_{7,2}(1) = -239512,\)\\

& & & \( F_{7,3}(0) = 98, \) \\

& & & \(  F_{7,3}(1) = -64484 \) \\
\hline
\end{tabular}
\caption{}
\label{table1}
\end{table}

The following identity holds for all \( t \geq 1 \):
\begin{align}\label{e0.1}
    f_t^2 \equiv f_{2t} \pmod{2}.
\end{align}

\begin{lemma} \cite{hir}
We have  
\begin{align}
    \frac{f_3}{f_1^3} &= \frac{f_4^6 f_6^3}{f_2^9 f_{12}^2} + 3q \frac{f_4^2 f_6 f_{12}^2}{f_2^7}, \label{l1}\\
    \frac{f_3^2}{f_1^2} &= \frac{f_4^4 f_6 f_{12}^2}{f_2^5 f_8 f_{24}} + 2q \frac{f_4 f_6^2 f_8 f_{24}}{f_2^4 f_{12}},\label{l3}\\
    \frac{f_2}{f_1^2}&= \frac{f_6^4f_9^6}{f_3^8f_{18}^3} + 2q\frac{f_6^3f_9^3}{f_3^7}+4q^2\frac{f_6^2f_{18}^3}{f_3^6}.\label{l4}
\end{align}
\end{lemma}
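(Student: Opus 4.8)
The plan is to split the modulus as $72=8\cdot 9$, obtain the factor $9$ for free from the generating function \eqref{Gen P_B 6n+4}, and carry all the arithmetic in the factor $8$. First I would record the shape of the index. Writing
\[
N := 18mn^2 + \frac{9n^2-1}{2} = \frac{9n^2(4m+1)-1}{2}
\]
and using that $\gcd(n,2)=1$ forces $n$ odd, a direct computation modulo $6$ gives $N\equiv 4\pmod 6$: indeed $N-4 = 18mn^2 + \tfrac{9(n^2-1)}{2}$ is divisible by $6$ because $8\mid n^2-1$. Hence $N=6M+4$ with
\[
M = \frac{N-4}{6} = 3\Big(mn^2 + \frac{n^2-1}{4}\Big) = 3m',\qquad m' := \frac{n^2(4m+1)-1}{4},
\]
and $m'\in\mathbb Z$ since $n$ is odd. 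By \eqref{Gen P_B 6n+4}, $b(N)=b(6M+4)=9\,c(M)$, where $\sum_{M\ge 0}c(M)q^M = f_2^4 f_3^4 f_6/f_1^8$. Because $9\equiv 1\pmod 8$, the condition $9c(M)\equiv 0\pmod{72}$ is equivalent to $c(M)\equiv 0\pmod 8$, so the whole theorem reduces to proving $c(M)\equiv 0\pmod 8$.

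Next I would strip the $f_1^8$ denominator modulo $8$. Squaring the $t=1$ case of \eqref{e0.1} twice (using that $u\equiv v\pmod{2^k}$ implies $u^2\equiv v^2\pmod{2^{k+1}}$) gives $f_1^8\equiv f_2^4\pmod 8$, whence $f_2^4/f_1^8\equiv 1\pmod 8$ as a power series and $c(M)\equiv [q^M]\,f_3^4 f_6 \pmod 8$. Replacing $q$ by $q^3$ in \eqref{n1} yields $f_3^4 f_6=\sum_{n\ge 0}a(n)q^{3n}$, so $[q^M]f_3^4f_6=a(M/3)$ when $3\mid M$ and $0$ otherwise. Since $M=3m'$, this gives $c(M)\equiv a(m')\pmod 8$, and the target becomes $a(m')\equiv 0\pmod 8$, where $4m'+1=n^2(4m+1)$.

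The heart of the argument is to propagate the hypothesis $a(m)\equiv 0\pmod 8$ when the index $4m+1$ is multiplied by the coprime square $n^2$. Set $\widetilde a(K):=a\!\left(\frac{K-1}{4}\right)$ for $K\equiv 1\pmod 4$ and $\widetilde a(K):=0$ otherwise, so that the $\widetilde a$ are the coefficients of $\sum_n a(n)q^{4n+1}=q f_4^4 f_8=\eta(4\tau)^4\eta(8\tau)$ and $\widetilde a(4m+1)=a(m)$. The engine is the Hecke three-term recurrence underlying the definition of $r(p),s(p)$ in \eqref{n4}: for every odd prime $p$ and admissible $K$ there is an integer $c_{p,K}$ with
\[
\widetilde a(p^2K) = c_{p,K}\,\widetilde a(K) - p^3\,\widetilde a(K/p^2).
\]
For the propagation of divisibility by $8$ I need only its structural features: integer coefficients, homogeneity, and a drop term $\widetilde a(K/p^2)$ that vanishes whenever $p^2\nmid K$. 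Writing $n=\prod_i p_i^{e_i}$ (all $p_i$ odd, as $\gcd(n,2)=1$), I would prove by induction on $\sum_i e_i$ the strengthened claim that $\widetilde a\big(d^2(4m+1)\big)\equiv 0\pmod 8$ for every $d\mid n$. The base case is $\widetilde a(4m+1)=a(m)\equiv 0$. For the step, multiply $d^2(4m+1)$ by $p^2$ for some $p\mid n$: the leading term is an integer multiple of $\widetilde a\big(d^2(4m+1)\big)\equiv 0$, while the drop term $\widetilde a\big(d^2(4m+1)/p^2\big)$ either equals $\widetilde a\big((d/p)^2(4m+1)\big)$ (when $p\mid d$), which is $\equiv 0$ by induction, or vanishes outright. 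The latter is where the hypotheses enter: if $p\nmid d$, then $p^2\mid d^2(4m+1)$ would force $p^2\mid 4m+1$, i.e. $p=g_j$ for some $j$; but $\gcd\!\big(n,\prod_j g_j^{\alpha_j}\big)=1$ forbids $p\mid n$ from being any $g_j$, so $p$ is either absent from $4m+1$ or one of the $h_i$ (dividing $4m+1$ only to the first power), and in both cases $p^2\nmid d^2(4m+1)$. Taking $d=n$ gives $a(m')\equiv 0\pmod 8$, completing the reduction.

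I expect the main obstacle to be twofold. First, justifying the integer-coefficient Hecke recurrence for the half-integral weight $5/2$ form $\eta(4\tau)^4\eta(8\tau)$ — equivalently, that the coefficients of $f_1^4f_2$ obey the relation encoded by $r(p)$ and $s(p)=p^3$ in \eqref{n4} — which is the input I take as given from the theory behind Theorems \ref{nt1} and \ref{nt3}; I emphasize that I use only its integrality and its vanishing drop term, not the exact multiplier, so the step-dependence of the multiplier $c_{p,K}$ is immaterial for propagating a zero residue. Second, the bookkeeping of the factorization $4m+1=\prod_i h_i\prod_j g_j^{\alpha_j}$ so that the drop term always vanishes: this is precisely why the coprimality is imposed against $2\prod_j g_j^{\alpha_j}$ rather than against $4m+1$ itself, and getting this case analysis exactly right is the delicate part.
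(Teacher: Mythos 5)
Your proposal does not address the statement at hand. The statement is the Lemma recording three classical dissection identities --- the 2-dissections of $f_3/f_1^3$ and $f_3^2/f_1^2$ in \eqref{l1} and \eqref{l3}, and the 3-dissection of $f_2/f_1^2$ in \eqref{l4} --- which the paper quotes from Hirschhorn's book \cite{hir} and does not prove at all; its ``proof'' is the citation. A proof of this Lemma would have to establish the product identities themselves, for instance by writing the relevant eta-quotients in terms of theta functions and splitting into residue classes of the exponents (the standard $\varphi$, $\psi$ dissection machinery of \cite{hir}), and nothing in your write-up engages with any of the three identities.

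What you have written instead is a proof sketch of Theorem \ref{nt2}, the congruence \eqref{ez2} for $b\left(18mn^2+\frac{9n^2-1}{2}\right)$ modulo $72$. Judged against that theorem, your argument is broadly sound and essentially parallel to the paper's own proof in Section \ref{sec 6}: the reduction of the modulus $72=8\cdot 9$ via \eqref{Gen P_B 6n+4} and \eqref{e0.1}, the passage to $a(m')\equiv 0\pmod 8$ with $4m'+1=n^2(4m+1)$, and an induction on the prime factors of $n$ driven by Newman's recurrence (the paper's \eqref{l0.2.1.6}, from \cite{newman}), with the coprimality condition $\left(n,\,2\prod_j g_j^{\alpha_j}\right)=1$ forcing the drop term to vanish. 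Your normalization $\widetilde a$ supported on indices $K\equiv 1\pmod 4$ is a clean repackaging of that case analysis. But this answers a different question: for the Lemma you were actually asked to prove, your proposal contains no relevant content, so as a proof of the stated result it is vacuous.
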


\begin{lemma}\cite[Lemma 2.3]{AN}
For any prime $p \geq 3$, we have
\begin{align}\label{f_1^3 p d}
    f(-q)^3 = f_1^3 &= \frac{1}{2} \sum_{\substack{k=0 \\ k \neq \frac{p-1}{2}}}^{p-1} (-1)^k q^{\frac{k^2 + k}{2}} \sum_{n=-\infty}^{\infty} (-1)^n (2pn + 2k + 1) q^{\frac{pn(pn + 2k + 1)}{2}}\notag\\
    &\quad+ (-1)^{\frac{p-1}{2}} pq^{\frac{p^2 - 1}{8}} f(-q^{p^2})^3.
\end{align}
Furthermore, if $0 \leq k \leq p-1$ and $k \neq \frac{p-1}{2}$, then $\frac{k^2 + k}{2} \not\equiv \frac{p^2 - 1}{8} \pmod{p}$.
\end{lemma}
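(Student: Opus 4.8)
The final statement (Theorem \ref{nt2}) asserts that if $m$ is a positive integer with $a(m)\equiv 0\pmod 8$, and $4m+1=\prod_{i=1}^u h_i\prod_{j=1}^v g_j^{\alpha_j}$ (the $h_i$ distinct primes to the first power, each $\alpha_j\ge 2$), then $b\!\left(18mn^2+\tfrac{9n^2-1}{2}\right)\equiv 0\pmod{72}$ for every $n\ge 1$ coprime to $2\prod_j g_j^{\alpha_j}$. The plan is to trace the index $18mn^2+\tfrac{9n^2-1}{2}$ back through the dissection identities of the paper until it lands on a coefficient of $f_1^4 f_2$, namely $a(\,\cdot\,)$, and then exploit the multiplicative structure of $a$ together with the hypothesis $a(m)\equiv 0\pmod 8$.

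\textbf{Reduction to the sequence $a$.} First I would locate $18mn^2+\tfrac{9n^2-1}{2}$ inside one of the established generating functions. Writing $N$ for this index, note $N$ is odd and $N=\tfrac{9(2m+1)n^2-1}{2}$, which is exactly of the shape $6\cdot(\text{something})+4$ reduced further, so the natural route is through \eqref{Gen P_B 6n+4}, $\sum b(6n+4)q^n=9\,f_2^4f_3^4f_6/f_1^8$, and the factor $9$ already supplies a factor of $9$ toward the modulus $72=8\cdot 9$. After peeling off the constant $9$, the remaining series should be expressible modulo $8$ in terms of $f_1^4f_2$: using \eqref{e0.1} repeatedly (e.g. $f_2^4\equiv f_1^8$, $f_6^4\equiv f_3^8$, etc.\ after squaring) one reduces $f_2^4 f_3^4 f_6/f_1^8$ modulo $8$ to a theta-type product whose coefficients are governed by $a(n)=[q^n]f_1^4f_2$. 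The precise bookkeeping here is where I expect the first real labor: I must show that the arithmetic-progression extraction lands on indices of the form $m n^2$ in the $a$-series, matching $\sum a(n)q^n=f_1^4f_2$ from \eqref{n1}.

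\textbf{Squares and multiplicativity of $a$.} The crucial structural input is that $f_1^4 f_2$ is (up to normalization) a weight-$\tfrac52$ object, but more usefully its coefficients $a(n)$ obey a Hecke-type multiplicativity at the relevant primes; specifically I would use that $a$ is supported on values linked to $4n+1$ and behaves multiplicatively over the factorization of $4m+1$. The hypothesis that each $g_j$ enters $4m+1$ with exponent $\alpha_j\ge 2$, and the coprimality condition $(n,\,2\prod_j g_j^{\alpha_j})=1$, are exactly the conditions needed so that replacing $m$ by $mn^2$ (equivalently $4m+1$ by $(4m+1)n^2$ in the quadratic argument) multiplies $a(m)$ by a factor that is a unit or a controlled power, preserving the divisibility $a(m)\equiv 0\pmod 8$. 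Thus $a(m)\equiv 0\pmod 8$ should propagate to $a(mn^2$-type indices$)\equiv 0\pmod 8$, and combined with the external factor $9$ this yields the full congruence modulo $72$.

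\textbf{Main obstacle.} The hard part will be the middle step: rigorously reducing the generating function behind $b\!\left(6\,\cdot+4\right)$ (or whichever dissection captures $N$) modulo $8$ to a clean expression in $a(n)=[q^n]f_1^4f_2$, and then justifying the passage from $a(m)$ to the quadratically-shifted index $mn^2$. This requires identifying the exact eta-quotient congruence $9\,f_2^4f_3^4f_6/f_1^8\equiv 9\cdot(\text{product involving }f_1^4f_2)\pmod{8\cdot\text{something}}$ and then invoking the multiplicative/Hecke action that sends $a(m)\mapsto a(m n^2)$ up to a unit when $(n,2\prod_j g_j^{\alpha_j})=1$. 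Verifying that the exponents $\alpha_j\ge 2$ are precisely what kills the obstruction to multiplicativity at the ramified primes $g_j$, while the coprimality to $2$ handles the even prime, is the delicate accounting I would check most carefully; the example $m=31$, $4m+1=5^3$, $(n,250)=1$ is the test case I would use to confirm the bookkeeping at each stage.
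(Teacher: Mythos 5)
Your proposal does not address the statement at hand. The statement is the preliminary lemma (quoted from \cite[Lemma 2.3]{AN}) giving the $p$-dissection of $f(-q)^3=f_1^3$ into a double theta sum plus the term $(-1)^{\frac{p-1}{2}}pq^{\frac{p^2-1}{8}}f(-q^{p^2})^3$, together with the non-congruence $\frac{k^2+k}{2}\not\equiv\frac{p^2-1}{8}\pmod p$ for $k\neq\frac{p-1}{2}$. What you have written instead is a strategy sketch for Theorem \ref{nt2}, a congruence for $b\left(18mn^2+\frac{9n^2-1}{2}\right)$ modulo $72$ — a completely different result, which moreover sits much later in the paper's logical chain (and in fact the paper proves that theorem by induction on the number of prime factors of $n$ using Newman's identity for $a(n)$, not by the Hecke-multiplicativity heuristics you describe). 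Nothing in your text engages with the dissection identity, so there is no overlap to evaluate: the proposal has a total gap with respect to the assigned statement.

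For the record, the lemma itself is elementary given Jacobi's identity \eqref{e2.0.3.3}: write $f_1^3=\sum_{n\ge0}(-1)^n(2n+1)q^{n(n+1)/2}$, split the index as $n=pm+k$ with $0\le k\le p-1$, and observe that exactly in the class $k=\frac{p-1}{2}$ the factor $2n+1=p(2m+1)$ carries a factor of $p$ and the exponent equals $\frac{p^2-1}{8}+\frac{p^2m(m+1)}{2}$, so that class resums to $(-1)^{\frac{p-1}{2}}pq^{\frac{p^2-1}{8}}f(-q^{p^2})^3$ by \eqref{e2.0.3.3} again; the remaining classes, after symmetrizing the sum over all integers (whence the factor $\frac12$), give the stated double sum. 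The final assertion follows since $\frac{k^2+k}{2}\equiv\frac{p^2-1}{8}\pmod p$ is equivalent to $(2k+1)^2\equiv0\pmod p$, i.e.\ $p\mid 2k+1$, which in the range $0\le k\le p-1$ forces $k=\frac{p-1}{2}$. Note also that the paper itself does not reprove this lemma but cites it; still, a blind proof attempt for this statement would need to carry out precisely the computation above, not the argument you supplied.
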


\begin{lemma}
For any odd prime $p$, the $p$-dissection of $\psi(q)$ is given by
\begin{equation}\label{psi p d}
    \psi(q) = \sum_{k=0}^{\frac{p-3}{2}} q^{\frac{k^2 + k}{2}}f\left( \frac{p^2 + (2k+1)p}{2}, \frac{p^2 - (2k+1)p}{2} \right) + q^{\frac{p^2 - 1}{8}} \psi(q^{p^2}).
\end{equation}
Furthermore,
\[\frac{k^2 + k}{2} \not\equiv \frac{p^2 - 1}{8} \pmod{p} \quad \text{for } 0 \leq k \leq \frac{p-3}{2}.\]
\end{lemma}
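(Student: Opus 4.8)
The plan is to reduce $\psi(q)$ to a single bilateral theta sum and then sort the summation variable into residue classes modulo $2p$, in close parallel with the $p$-dissection of $f_1^3$ recorded in \eqref{f_1^3 p d}. First I would record the elementary identity $8\cdot\frac{n(n+1)}{2}+1=(2n+1)^2$, which rewrites $\psi(q)=\sum_{n\ge0}q^{n(n+1)/2}$ as a sum over the odd numbers $\ell=2n+1$. Since the exponent $(\ell^2-1)/8$ is invariant under $\ell\mapsto-\ell$, and $2n+1$ runs through the positive odd integers as $n$ runs through $\mathbb{Z}_{\ge0}$, summing over all odd $\ell\in\mathbb Z$ simply doubles the series, giving the bilateral form
\[
2\psi(q)=\sum_{\ell\ \text{odd}}q^{(\ell^2-1)/8}.
\]

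Next I would partition the odd integers according to their residue modulo $2p$, using the symmetric system of representatives $\{\pm1,\pm3,\dots,\pm(p-2),\,p\}$, which is exactly a complete set of the $p$ odd residues modulo $2p$. Writing $\ell=2pj+i$ with $j\in\mathbb Z$ and $i$ one of these representatives, a short expansion of $\ell^2$ gives
\[
\frac{\ell^2-1}{8}=\frac{pj(pj+i)}{2}+\frac{i^2-1}{8},
\]
so that the inner sum over $j$ is a genuine theta function. Matching the quadratic $\frac{p^2}{2}j^2+\frac{pi}{2}j$ against the exponent in $f(q^\alpha,q^\beta)=\sum_{j}q^{\alpha j(j+1)/2+\beta j(j-1)/2}$ forces $\alpha=\tfrac{p^2+pi}{2}$ and $\beta=\tfrac{p^2-pi}{2}$, whence the inner sum equals $f\!\left(q^{(p^2+pi)/2},q^{(p^2-pi)/2}\right)$.

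Finally I would collapse the pairing. For $i=\pm(2k+1)$ with $0\le k\le\frac{p-3}{2}$ the prefactor is $q^{(i^2-1)/8}=q^{(k^2+k)/2}$ in both cases, and the two theta functions coincide because $f(a,b)=f(b,a)$; hence the pair $\{2k+1,-(2k+1)\}$ contributes $2q^{(k^2+k)/2}f\!\left(q^{(p^2+(2k+1)p)/2},q^{(p^2-(2k+1)p)/2}\right)$. The central representative $i=p$ corresponds to $\ell=p(2j+1)$; substituting $m=2j+1$ gives $\sum_{m\ \text{odd}}q^{(p^2-1)/8}q^{p^2(m^2-1)/8}=2q^{(p^2-1)/8}\psi(q^{p^2})$. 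Summing all contributions and dividing by $2$ yields the asserted dissection. For the final nonvanishing assertion I would argue by contradiction: if $\frac{k^2+k}{2}\equiv\frac{p^2-1}{8}\pmod p$, then multiplying by $8$ (permissible since $p$ is odd) gives $(2k+1)^2\equiv p^2\equiv0\pmod p$, forcing $p\mid 2k+1$; but $0\le k\le\frac{p-3}{2}$ gives $1\le 2k+1\le p-2$, so this is impossible.

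The expansion of $\ell^2$ and the theta-function matching are routine; the only point requiring genuine care is the combinatorial bookkeeping of the symmetric residue system—verifying that $\{\pm1,\dots,\pm(p-2),p\}$ really is a complete set of odd residues modulo $2p$, that each $\pm$ pair fuses into a single term via $f(a,b)=f(b,a)$ with the correct power of $2$, and that the central class $i=p$ produces precisely $\psi(q^{p^2})$ rather than a spurious theta factor. I expect this pairing-and-counting step to be the main (though modest) obstacle.
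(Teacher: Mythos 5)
Your proof is correct and complete; the paper states this lemma without proof (it is a standard $p$-dissection result quoted from the literature), and your argument --- bilateralizing $\psi(q)$ via $8\cdot\tfrac{n(n+1)}{2}+1=(2n+1)^2$ and sorting the odd integers into the symmetric residue system $\{\pm1,\pm3,\dots,\pm(p-2),p\}$ modulo $2p$, with the $\pm$ pairs fusing by $f(a,b)=f(b,a)$ and the central class $i=p$ reproducing $\psi(q^{p^2})$ --- is exactly the standard derivation, including the correct contradiction argument for the final incongruence. One cosmetic remark: the lemma as printed writes the arguments of $f$ as bare exponents, whereas they should be $q$ raised to those exponents, which is what your matching of $\alpha=\tfrac{p^2+pi}{2}$, $\beta=\tfrac{p^2-pi}{2}$ correctly produces.
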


The paper is organized as follows. In Section \ref{sec 1}, we present a validation for Proposition \ref{Thm Gen P_B 3n} using Maple. In Section \ref{sec 2}, we provide an elementary proof of Theorem \ref{Thm Cong 1}. In Sections \ref{sec 2}--\ref{sec 6}, we prove Theorems \ref{t0.0.1.0}--\ref{nt2}. Finally, in Section \ref{sec 7}, we conclude the paper with some remarks.

\section{Computer Verification of Proposition \ref{Thm Gen P_B 3n} with Maple}\label{sec 1}
An analytic proof of \eqref{Gen P_B 3n} is desirable but seems quite challenging. Therefore, we find \eqref{Gen P_B 3n} with Maple with the help of the \texttt{$q$-series} package developed by Garvan \cite{Gar99}. The Maple commands are written below:
\begin{align*}
    &>\,\texttt{with(qseries):}\\
    &>\,\texttt{with(ETA):}\\
    &>\, \texttt{M}\coloneqq \texttt{n}\rightarrow \frac{q^{n\cdot \left(n+1\right)}\texttt{product}\left(1+q^{2\cdot j+2},j=0..n-1\right)}{\texttt{product}\left(1-q^{2\cdot j+1},j=0..n\right)^{2}};\\
    &\quad\quad \textcolor{blue}{M\coloneqq n\,\mapsto \dfrac{q^{n\cdot (n+1)}\cdot\left(\displaystyle{\prod_{j=0}^{n-1}}(1+q^{2\cdot j+2}) \right)}{\left(\displaystyle{\prod_{j=0}^{n}}(1-q^{2\cdot j+1}) \right)^2}}\\
    &>\, \texttt{add(coeff(series(add($M(n)$, $n$ = 0..3001), $q$, 3001), $q$,$3n$)$q^{n}$,$n$ = 0..1000);}\\
    &\quad \quad \textcolor{blue}{1+6 q+20 q^{2}+54 q^{3}+129 q^{4}+282 q^{5}+580 q^{6}+1134 q^{7}+2129 q^{8}+3864 q^{9}+6812 q^{10}}\\
    &\quad \quad\textcolor{blue}{+\cdots +213349810381656918950139080761530838997673680728800 q^{999}}\\
    &\quad\quad \textcolor{blue}{+226621172121394950726209294107119502848934920312640 q^{1000}}\\
    &>\, H:=1+6 q+20 q^{2}+54 q^{3}+129 q^{4}+282 q^{5}+580 q^{6}+1134 q^{7}+2129 q^{8}+3864 q^{9}\\
    & \quad \,+6812 q^{10}+\cdots +213349810381656918950139080761530838997673680728800 q^{999}\\
    &\quad\,+ 226621172121394950726209294107119502848934920312640 q^{1000}:\\
    &>\, \texttt{qetamake($H$, $q$, 1000);}\\
    &\quad\quad \textcolor{blue}{\dfrac{f_2^7f_3^2}{f_1^6f_4f_6}}
\end{align*}
The instructions for the installations of the packages \texttt{$q$-series} and \texttt{ETA} are given by Garvan \cite{Gar99}. It is also worth noting that it is actually enough to obtain $H$ above up to the sixth power of $q$ as the \texttt{qetamake()} command gives \eqref{Gen P_B 3n} for $H$ up to all exponents of $q$ beyond 6. 

Thus, we validate Proposition \ref{Thm Gen P_B 3n}.\qed

The proofs of the remaining theorems and corollaries are given by $q$-series.

\section{Proof of Theorem \ref{Thm Cong 1}}\label{sec 2}

We perform a $2$-dissection of \eqref{Gen P_B 3n} using \eqref{l1}, yielding
    \begin{equation*}
        \sum_{n \geq 0}  b(3n) q^n = \frac{f_4^{11} f_6^5}{f_2^{11} f_{12}^4} + 6q \frac{f_4^7 f_6^3}{f_2^9} + 9q^2 \frac{f_4^3 f_6 f_{12}^4}{f_2^7}.
    \end{equation*}
    Extracting the terms containing $q^{2n+1}$ from both sides, we obtain  
    \begin{equation}\label{f1}
        \sum_{n \geq 0}  b(6n+3) q^n = 6\frac{f_2^7 f_3^3}{f_1^9}.
    \end{equation}
It follows from \eqref{f1} that \eqref{e501.0.0} is true. \\

Under modulo $4$, \eqref{Gen P_B 6n+4} becomes
\begin{equation*}
    \sum_{n \geq 0} b(6n+4)q^n \equiv 9f_6^3\left( \frac{f_4}{f_2^2} \right)^2 \pmod{36}.
\end{equation*}
Next, replacing $q$ with $q^2$ in \eqref{l4}, substituting it into the above expression, and then extracting the terms involving $q^{3n}$ from both sides, we obtain
\begin{equation*}
    \sum_{n \geq 0} b(18n+4)q^n \equiv 9\frac{f_2f_4^2f_6^2}{f_{12}} \pmod{36}.
\end{equation*}
Finally, extracting the terms involving $q^{2n+1}$ from both sides, we arrive at \eqref{e501.0.0.0}.\\

Performing a further 2-dissection on \eqref{f1} using \eqref{l1}, we obtain
    \begin{equation*}
        \sum_{n \geq 0}  b(6n+3) q^n = 6 \frac{f_4^{18} f_6^{9}}{f_2^{20} f_{12}^6} + 54q \frac{f_4^{14} f_6^7}{f_2^{18} f_{12}^2} + 162q^2 \frac{f_4^{10} f_6^5 f_{12}^2}{f_{2}^{16}} + 162q^3 \frac{f_4^6 f_6^3 f_{12}^6}{f_2^{14}}.
    \end{equation*}
    Extracting the terms containing $q^{2n+1}$ from both sides, we arrive at  
    \begin{equation}\label{mod 54}
        \sum_{n \geq 0}  b(12n+9) q^n = 54\left( \frac{f_2^{14} f_3^7}{f_1^{18} f_6^2} + 3q \frac{f_2^6 f_3^3 f_{6}^6}{f_1^{14}} \right).
    \end{equation}
    This establishes \eqref{e50.0.0}. \qed

\section{Proof of Theorem \ref{t0.0.1.0}}\label{sec 3}

We begin with the identity  \eqref{Gen P_B 3n}:
\begin{equation}\label{et1.1}
    \sum_{n \geq 0}  b(3n) q^n = \frac{f_2^7f_3^2}{f_1^6f_4f_6}.
\end{equation}
Applying \eqref{e0.1} in \eqref{et1.1}, we obtain  
\begin{equation}\label{et3.1}
     \sum_{n \geq 0}  b(3n) q^n \equiv f_1^4 = f_1f_1^3 \pmod{2}.
\end{equation}
Combining \eqref{e2.0.3.3}, \eqref{e2.0.3.4} and \eqref{et3.1}, we obtain  
\begin{equation}\label{e50.3.0}
    \sum_{n \geq 0}  b(3n) q^{24n+4} \equiv \sum_{m \in \mathbb{Z}}\sum_{j \in \mathbb{Z}} (-1)^{m+j}(2m+1)q^{3(2m+1)^2+(6j-1)^2} \pmod{2}.
\end{equation}
From \eqref{e50.3.0}, we conclude that if $24n+4$ is not of the form \( 3(2m+1)^2+(6j-1)^2 \), then  
\begin{equation*}
    b(3n) \equiv 0 \pmod{2}.
\end{equation*}
Now, let $p \geq 5$ be a prime with $\left(\frac{-3}{p}\right)_L=-1$, and let $\nu_p(N)$ denote the exponent of the highest power of $p$ dividing $N$. If $N$ is of the form \( 3x^2+y^2 \), then $\nu_p(N)$ is even. However, it is easy to verify that if $p \nmid n$, then  
\begin{equation*}
    \nu_p\left( 24 \left( p^{2k+1}n+ \frac{p^{2k+2}-1}{6} \right)+ 4 \right) = 2k+1.
\end{equation*}
Thus, \( 24 \left( p^{2k+1}n+ \frac{p^{2k+2}-1}{6} \right)+4 \) is not of the form \( 3x^2+y^2 \) when $p \nmid n$. Hence, \eqref{e50.0} holds. \qed

\section{Proofs of Theorems \ref{Thm 1.2}, \ref{Thm 1.1}, and \ref{Thm 1.3}}\label{sec 4}

\begin{proof}[Proof of Theorem \ref{Thm 1.2}]
Reducing \eqref{Gen P_B 2n} modulo $4$, we have
\begin{align*}
    \sum_{n \geq 0} b(2n)q^n & \equiv f_2^3 \pmod{4}.
\end{align*}
Now using \eqref{f_1^3 p d}, we get 
\begin{align}\label{mod 8.1}
    \sum_{n \geq 0} b(2n)q^n & \equiv  \frac{1}{2} \sum_{\substack{k=0 \\ k \neq \frac{p-1}{2}}}^{p-1} (-1)^k q^{k^2+k} \sum_{n=-\infty}^{\infty}(-1)^n (2pn + 2k + 1) q^{pn(pn + 2k + 1)} \nonumber\\
    &+ (-1)^{\frac{p-1}{2}} pq^{\frac{p^2 - 1}{4}} f(-q^{2p^2})^3 \pmod{4}.
\end{align}
Since $\ell$ was chosen such that $4\ell + 1$ is a quadratic nonresidue modulo $p$, it follows that for any integer $k$,
\[
4pn + 4\ell + 1 \neq (2k + 1)^2.
\]
This implies that $pn + \ell$ cannot be represented as $k^2+k$ for any integer $k$. Also, since $4\ell + 1$ is a quadratic nonresidue modulo $p$, we have
\[
4pn + 4\ell + 1 \neq p^2.
\]
Hence, $pn + \ell$ cannot be equal to $\frac{p^2 - 1}{4}$ either. Therefore, upon extracting the terms involving $q^{pn + \ell}$, we obtain
\begin{equation}\label{c1.1.1}
    b(2(pn + \ell)) \equiv 0 \pmod{4}.
\end{equation}

Further, extracting the terms involving $q^{p^2n+(p^2-1)/4}$ from both sides of \eqref{mod 8.1}, we get
\begin{align}\label{mod 8 4}
    b\left( 2 \left(p^2n+\frac{p^2-1}{4}\right) \right) & \equiv (-1)^{\frac{p-1}{2}} p f_2^3 \pmod{4},
\end{align}
which is equivalent to
\begin{align}\label{mod 8 5}
     b(2n) & \equiv (-1)^{\frac{1-p}{2}}p^{-1}b\left( 2 \left(p^{2}n+\frac{p^2-1}{4}\right) \right) \pmod{4}.
\end{align}
Successive iterations on \eqref{mod 8 5} give
\begin{align}
    b(2n) &\equiv (-1)^{\frac{1-p}{2}}p^{-1} b\left( 2 \left(p^2n+\frac{p^2-1}{4}\right) \right) \nonumber\\
           &\equiv (-1)^{{1-p}}p^{-2} b\left( 2 \left(p^2\left(p^2n+\frac{p^2-1}{4}\right)+\frac{p^2-1}{4}\right) \right)\nonumber\\
           & \,\,\,\vdots\nonumber\\
           &\equiv (-1)^{\frac{(k+1)(1-p)}{2}}p^{-k-1} b\left( 2\left( p^{2k+2}n + \frac{p^2-1}{4}\left( p^{2k}+ \cdots +p^2+1 \right) \right) \right)\nonumber\\
           & \equiv (-1)^{\frac{(k+1)(1-p)}{2}}p^{-k-1} b\left(2\left(p^{2k+2}n + \frac{p^{2k+2}-1}{4}\right)\right)\nonumber\\
           & \equiv (-1)^{\frac{(k+1)(1-p)}{2}}p^{-k-1} b\left(2p^{2k+2}n + \frac{p^{2k+2}-1}{2}\right) \pmod{4} \label{c1.1}.
\end{align}
This completes the proof.
\end{proof}

\begin{proof}[Proof of Corollary \ref{C1}]
Substituting $pn+\ell$ into \eqref{c1.1} and then applying \eqref{c1.1.1}, we obtain \eqref{c1.1.1.1}.
\end{proof}

\begin{proof}[Proof of Theorem \ref{Thm 1.1}]
Reducing \eqref{Gen P_B 4n} modulo $8$, we have
\begin{align*}
    \sum_{n \geq 0} b(4n)q^n & \equiv \frac{f_2^2}{f_1} = \psi(q) \pmod{8}.
\end{align*}
Now using \eqref{psi p d}, we get 
\begin{align}\label{mod 8}
    \sum_{n \geq 0} b(4n)q^n & \equiv \sum_{k=0}^{\frac{p-3}{2}} q^{\frac{k^2 + k}{2}}f\left( \frac{p^2 + (2k+1)p}{2}, \frac{p^2 - (2k+1)p}{2} \right) + q^{\frac{p^2 - 1}{8}} \psi(q^{p^2}) \pmod{8}.
\end{align}
Since $\ell$ was chosen such that $8\ell + 1$ is a quadratic nonresidue modulo $p$, it follows that for any integer $k$,
\[
8pn + 8\ell + 1 \neq (2k + 1)^2.
\]
This implies that $pn + \ell$ cannot be represented as $\frac{k^2 + k}{2}$ for any integer $k$. Also, since $8\ell + 1$ is a quadratic nonresidue modulo $p$, we have
\[
8pn + 8\ell + 1 \neq p^2.
\]
Hence, $pn + \ell$ cannot be equal to $\frac{p^2 - 1}{8}$ either. Therefore, upon extracting the terms involving $q^{pn + \ell}$, we obtain
\begin{align}\label{c1.1.2}
    b(4(pn + \ell)) \equiv 0 \pmod{8}.
\end{align}

Further, extracting the terms involving $q^{p^2n+(p^2-1)/8}$ from both sides of \eqref{mod 8}, we get
\begin{align}\label{mod 8 2}
    b\left( 4 \left(p^2n+\frac{p^2-1}{8}\right) \right) & \equiv \psi(q) \pmod{8},
\end{align}
which is equivalent to
\begin{align}\label{mod 8 3}
     b(4n) & \equiv b\left( 4 \left(p^2n+\frac{p^2-1}{8}\right) \right) \pmod{8}.
\end{align}
Successive iterations on \eqref{mod 8 3} give
\begin{align}
    b(4n) &\equiv b\left( 4 \left(p^2n+\frac{p^2-1}{8}\right) \right) \nonumber\\
           &\equiv b\left( 4 \left(p^2\left(p^2n+\frac{p^2-1}{8}\right)+\frac{p^2-1}{8}\right) \right)\nonumber\\
           & \,\,\,\vdots\nonumber\\
           & \equiv b\left( 4\left( p^{2k+2}n + \frac{p^2-1}{8}\left( p^{2k}+p^{2k-1}+ \cdots +p^2+1 \right) \right) \right)\nonumber\\
           & \equiv b\left(4\left(p^{2k+2}n + \frac{p^{2k+2}-1}{8}\right)\right)\nonumber\\
           & \equiv b\left(4p^{2k+2}n + \frac{p^{2k+2}-1}{2}\right) \pmod{8} \label{c1.2}.
\end{align}
This completes the proof.
\end{proof}

\begin{proof}[Proof of Corollary \ref{C2}]
Substituting $pn+\ell$ into \eqref{c1.2} and then applying \eqref{c1.1.2}, we obtain \eqref{c1.1.1.2}.
\end{proof}

\begin{proof}[Proof of Theorem \ref{Thm 1.3}]
Reducing \eqref{Gen P_B 6n+4} modulo $4$, we have
\begin{align*}
    \sum_{n \geq 0} b(6n+4)q^n & \equiv 9f_6^3 \pmod{36}.
\end{align*}
Now using \eqref{f_1^3 p d}, we get 
\begin{align}\label{mod 36}
    \sum_{n \geq 0} b(6n+4)q^n & \equiv  \frac{9}{2} \sum_{\substack{k=0 \\ k \neq \frac{p-1}{2}}}^{p-1} (-1)^k q^{3k^2+3k} \sum_{n=-\infty}^{\infty}(-1)^n (2pn + 2k + 1) q^{3pn(pn + 2k + 1)} \nonumber\\
    &+ 9(-1)^{\frac{p-1}{2}} pq^{\frac{3(p^2 - 1)}{4}} f(-q^{6p^2})^3 \pmod{36}.
\end{align}
Since $\ell$ was chosen such that $12\ell + 9$ is a quadratic nonresidue modulo $p$, it follows that for any integer $k$,
\[
12pn + 12\ell + 9 \neq (6k + 3)^2.
\]
This implies that $pn + \ell$ cannot be represented as $3k^2+3k$ for any integer $k$. Also, since $12\ell + 9$ is a quadratic nonresidue modulo $p$, we have
\[
12pn + 12\ell + 9 \neq 9p^2.
\]
Hence, $pn + \ell$ cannot be equal to $\frac{3(p^2 - 1)}{4}$ either. Therefore, upon extracting the terms involving $q^{pn + \ell}$, we obtain
\begin{equation}\label{c1.1.3}
    b(6(pn + \ell)+4) \equiv 0 \pmod{36}.
\end{equation}

Further, extracting the terms involving $q^{p^2n+3(p^2-1)/4}$ from both sides of \eqref{mod 36}, we get
\begin{align}\label{mod 36 4}
    b\left( 6 \left(p^2n+\frac{3(p^2-1)}{4}\right) +4 \right) & \equiv 9(-1)^{\frac{p-1}{2}}p f_6^3 \pmod{36},
\end{align}
which is equivalent to
\begin{align}\label{mod 36 5}
     b(6n+4) & \equiv 9^{-1}(-1)^{\frac{1-p}{2}}p^{-1}b\left( 6 \left(p^2n+\frac{3(p^2-1)}{4}\right)+4 \right) \pmod{36}.
\end{align}
Successive iterations on \eqref{mod 36 5} give
\begin{align}
    b(6n+4) &\equiv 9^{-1}(-1)^{\frac{1-p}{2}}p^{-1} b\left( 6 \left(p^2n+\frac{3(p^2-1)}{4}\right)+4 \right) \nonumber\\
           &\equiv 9^{-2}(-1)^{1-p}p^{-2} b\left( 6 \left(p^2\left(p^2n+\frac{3(p^2-1)}{4}\right)+\frac{3(p^2-1)}{4}\right) +4 \right)\nonumber\\
           &\,\,\, \vdots\nonumber\\
           &\equiv 9^{-k-1}(-1)^{\frac{(k+1)(1-p)}{2}}p^{-k-1} b\left( 6\left( p^{2k+2}n + \frac{3(p^2-1)}{4}\left( p^{2k}+ \cdots +p^2+1 \right) \right) +4 \right)\nonumber\\
           & \equiv 9^{-k-1}(-1)^{\frac{(k+1)(1-p)}{2}}p^{-k-1} b\left(6\left(p^{2k+2}n + \frac{3(p^{2k+2}-1)}{4}\right)+4\right)\nonumber\\
           & \equiv 9^{-k-1}(-1)^{\frac{(k+1)(1-p)}{2}}p^{-k-1} b\left(6p^{2k+2}n + \frac{9p^{2k+2}-1}{2}\right) \pmod{36}. \label{c1.3}
\end{align}
This completes the proof.
\end{proof}

\begin{proof}[Proof of Corollary \ref{cor last}]
Substituting $pn+\ell$ into \eqref{c1.3} and then applying \eqref{c1.1.3}, we obtain \eqref{c1.1.1.3}.
\end{proof}

\section{Proof of Theorem \ref{Theorem f_1^7}}\label{sec 5}
Under modulo 9, \eqref{f1} becomes
\begin{equation*}
        \sum_{n \geq 0}  b(6n+3) q^n \equiv 6f_2^7 \pmod{54}.
\end{equation*}
Extracting the terms containing $q^{2n}$ from both sides, we obtain  
\begin{equation*}
        \sum_{n \geq 0}  b(12n+3) q^n \equiv 6f_1^7 = 6\sum_{n=0}^{\infty}a_7(n)q^n \pmod{54},
\end{equation*}
which implies that for all $n\geq 0$,
\begin{equation}\label{f1.1}
    b(12n+3) \equiv 6a_7(n) \pmod{54}.
\end{equation}

We now deduce from  Table \ref{table1} that
\begin{equation}\label{f1.2}
    r_1(7,9) = 5, \qquad r_2(7,9) = 8, \qquad r_3(7,9) = 8.
\end{equation}
In view of \eqref{f_1 7 1}--\eqref{f_1 7 3} and \eqref{f1.2},
\begin{align}
&a_7\left(5^{12\alpha + 11}\left(5n + \beta_1 - 4\right) + \frac{7(5^{12\alpha + 12} - 1)}{24}\right) \nonumber \\
&\equiv a_7\left(7^{18\alpha + 17}\left(7n + \beta_2 - 6\right) + \frac{7(7^{18\alpha + 18} - 1)}{24}\right) \nonumber \\
&\equiv a_7\left(13^{18\alpha + 17}\left(13n + \beta_3 - 12\right) + \frac{7(13^{18\alpha + 18} - 1)}{24}\right) \equiv 0 \pmod{9}, \label{f1.3}
\end{align}
where $0 \leq \beta_1 \leq 4$, $\beta_1 \neq 1$, $0 \leq \beta_2 \leq 6$, $\beta_2 \neq 2$, and $0 \leq \beta_3 \leq 12$, $\beta_3 \neq 3$. 
Congruence \eqref{f1_3} follows from \eqref{f1.1} and \eqref{f1.3}.

By \eqref{f_1 7 4}, \eqref{f1.2} and the fact that $a_7(12) \equiv 0 \pmod{9}$,
\begin{align}
a_7\left( \frac{295 \times 5^{12\alpha} - 7}{24} \right) 
\equiv a_7\left( \frac{295 \times 7^{18\alpha} - 7}{24} \right) 
\equiv a_7\left( \frac{295 \times 13^{18\alpha} - 7}{24} \right) 
\equiv 0 \pmod{9}. \label{f1.4}
\end{align}
Congruence \eqref{f1_4} follows from \eqref{f1.1} and \eqref{f1.4}.

Moreover, it is easy to verify that
\begin{equation}\label{e1}
    A_7(12) \equiv 1 \pmod{9}, \quad C_7(12) \equiv -1 \pmod{9}, \quad E_7(12) \equiv 1 \pmod{9}.
\end{equation}
Thanks to \eqref{f_1 7 5}--\eqref{f_1 7 7}, \eqref{f1.2}, and \eqref{e1}, we find that
\begin{align}\label{e2}
    a_7\left( \frac{7(5^{12\alpha} - 1)}{24} \right) &\equiv a_7\left( \frac{7(13^{18\alpha} - 1)}{24} \right) \equiv 1 \pmod{9},
\\
\label{e3}
    a_7\left( \frac{7(7^{18\alpha} - 1)}{24} \right) &\equiv (-1)^{\alpha} \pmod{9}.
\end{align}
Congruences \eqref{f1_5} and \eqref{f1_6} follow from \eqref{f1.1}, \eqref{e2} and \eqref{e3}. This completes the proof. \qed

\section{Proofs of Theorems \ref{nt1}--\ref{nt2}}\label{sec 6}
Under modulo $8$, \eqref{Gen P_B 6n+4} becomes
\begin{equation}\label{ez1}
    \sum_{n \geq 0}  b(6n+4) q^n \equiv 9f_3^4f_6 \pmod{72}.
\end{equation}
Extracting the terms containing $q^{3n}$ from both sides of \eqref{ez1}, we get  
\begin{equation}\label{ez2.0}
    \sum_{n \geq 0}  b(18n+4) q^n \equiv 9f_1^4f_2 = 9\sum_{n = 0}^\infty a(n)q^n \pmod{72}.
\end{equation}
To prove Theorem \ref{nt1}, we require some lemmas.
\begin{lemma}\label{l200}
    For all $n, k \geq 0$ and primes $p \geq 3$, we have
    \begin{equation}\label{l0.2.1.0}
        a\left(p^{2k}n + \frac{p^{2k} - 1}{4} \right) = U_k(p) a\left(p^2 n + \frac{p^2 - 1}{4} \right) + V_k(p)a(n),
    \end{equation}
    where $a(n)$ is defined by \eqref{n1} and the sequence terms $U_k(p)$ and $V_k(p)$ satisfy
    \begin{align}
        U_k(p) &= rU_{k-1}(p) - sU_{k-2}(p), \label{l0.2.1.1}\\
        V_k(p) &= rV_{k-1}(p) - sV_{k-2}(p) \label{l0.2.1.2},
    \end{align}
     where $r$ and $s$ are defined in \eqref{n4} and
    \begin{equation}\label{l0.2.1.3}
        U_1(p) = 1, \quad U_0(p) = 0, \quad V_1(p) = 0, \quad V_0(p) = 1.
    \end{equation}
\end{lemma}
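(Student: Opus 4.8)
The plan is to fix the prime $p$ and, for each $n\ge 0$, study the sequence $\{b_k(n)\}_{k\ge 0}$ given by $b_k(n):=a\!\left(p^{2k}n+\frac{p^{2k}-1}{4}\right)$; in this notation \eqref{l0.2.1.0} asserts that $b_k(n)=U_k(p)\,b_1(n)+V_k(p)\,b_0(n)$. By \eqref{l0.2.1.1}--\eqref{l0.2.1.3}, $U_k$ and $V_k$ are the two fundamental solutions of the linear recurrence $X_k=rX_{k-1}-sX_{k-2}$ with $(U_0,U_1)=(0,1)$ and $(V_0,V_1)=(1,0)$. Hence the entire statement reduces to the single fact that the sequence $\{b_k(n)\}_k$ itself satisfies
\[
b_k(n)=r\,b_{k-1}(n)-s\,b_{k-2}(n),\qquad k\ge 2,
\]
with the \emph{same} $r=r(p)$ and $s=p^3$ of \eqref{n4}: granting this, a one-line induction on $k$ (with base cases $b_0(n)=a(n)$ and $b_1(n)=a(p^2n+\frac{p^2-1}{4})$, matched by $U_0=V_1=0$ and $U_1=V_0=1$) shows that $\{b_k(n)\}$ and $\{U_k b_1+V_k b_0\}$ obey the same recurrence with the same two initial values, and so coincide for every $k$.

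So the real work is the displayed three-term recurrence, and the natural route is a single master $p^2$-dissection of $\sum_n a(n)q^n=f_1^4f_2$. The relevant index map is $4n+1\mapsto p^2(4n+1)$, that is $n\mapsto p^2n+\frac{p^2-1}{4}$, which dictates the offset $\frac{p^2-1}{4}$. Using the $p$-dissection of $f_1^3$ in \eqref{f_1^3 p d} (and, for the even part, the companion dissection \eqref{psi p d}), I would extract from $f_1^4f_2$, for every $n\ge 0$, a relation of the shape
\[
a\!\left(p^2n+\tfrac{p^2-1}{4}\right)=c(n)\,a(n)-p^3\,a\!\left(\tfrac{4n+1-p^2}{4p^2}\right),
\]
where the last term is present only when its argument is a nonnegative integer (equivalently $p^2\mid 4n+1$) and $c(n)$ depends on $n$ only through the quadratic character $\left(\frac{4n+1}{p}\right)$. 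The coefficient $p^3=s$ is produced by the principal self-similar pieces of the dissections, such as the term $(-1)^{(p-1)/2}pq^{(p^2-1)/8}f(-q^{p^2})^3$ of \eqref{f_1^3 p d} together with the analogous piece of the remaining theta factor.

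To turn this into a constant-coefficient recurrence I would apply the master identity with $n$ replaced by the tower value $p^{2(k-1)}n+\frac{p^{2(k-1)}-1}{4}$, under which $a(p^2n+\cdots)$, $a(n)$, and the third term become $b_k(n)$, $b_{k-1}(n)$, and $b_{k-2}(n)$ (the last for $k\ge 2$, since then $4\bigl(p^{2(k-1)}n+\frac{p^{2(k-1)}-1}{4}\bigr)+1=p^{2(k-1)}(4n+1)$ is divisible by $p^2$). The decisive simplification is that the $n$-dependent part of $c$, carried by $\left(\frac{4n+1}{p}\right)$ evaluated at this tower value, i.e. $\left(\frac{p^{2(k-1)}(4n+1)}{p}\right)$, \emph{vanishes} as soon as $k\ge 2$, because then $p\mid p^{2(k-1)}(4n+1)$; what survives is the $n$-independent constant, which is precisely $r=r(p)$ of \eqref{n4}. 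Here the Legendre symbol $\left(\frac{(p^2-1)/4}{p}\right)_L$ in the definition of $r$ is a fixed quantity depending only on $p$, not the character $\left(\frac{4n+1}{p}\right)$ that dropped out; this collapse is exactly why \eqref{l0.2.1.0} must keep $b_1(n)$ as an unexpanded seed, since at $k=1$ the character $\left(\frac{4n+1}{p}\right)$ genuinely depends on $n$. The main obstacle is thus the dissection step itself — establishing the master identity above and, above all, pinning down that the surviving constant equals $r$ with the correct sign $(-1)^{(p-1)/2}$ and Legendre-symbol correction; once that bookkeeping is done, $\{b_k(n)\}$ satisfies $b_k=rb_{k-1}-sb_{k-2}$ for $k\ge 2$ and the lemma follows by the recurrence-matching induction of the first paragraph.
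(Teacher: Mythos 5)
Your reduction of the lemma to the single claim that $b_k(n):=a\bigl(p^{2k}n+\frac{p^{2k}-1}{4}\bigr)$ satisfies $b_k=rb_{k-1}-sb_{k-2}$ for $k\ge 2$, followed by a recurrence-matching induction, is a clean repackaging of exactly what the paper does: the paper's proof is an induction on $k$ whose engine is the specialization of Newman's identity \eqref{l0.2.1.6} at $n\mapsto p^2n+\frac{p^2-1}{4}$, where the second Legendre symbol in $\gamma(n)$ becomes $\bigl(\frac{-p^2n}{p}\bigr)_L=0$ and the coefficient collapses to the constant $r$ of \eqref{n4}, yielding \eqref{l0.2.1.7}. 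Your observation that the character $\bigl(\frac{4n+1}{p}\bigr)$ dies precisely when the argument is a tower value (because $p\mid p^{2(k-1)}(4n+1)$) is the same mechanism, and your explanation of why $b_1(n)$ must be kept as an unexpanded seed is correct and even slightly more illuminating than the paper's presentation.

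The genuine gap is the ``master identity'' itself, which you correctly identify as the real work but then only sketch. The paper does not prove it either --- it quotes it as Newman's theorem \cite{newman}, equation \eqref{l0.2.1.6}, with the coefficient $\gamma(n)=a\bigl(\frac{p^2-1}{4}\bigr)+(-1)^{\frac{p-1}{2}}p\bigl(\bigl(\frac{(p^2-1)/4}{p}\bigr)_L-\bigl(\frac{(p^2-1)/4-n}{p}\bigr)_L\bigr)$ --- and your proposed route to it is unlikely to succeed as described. First, the factorization is off: $f_1^3\cdot\psi(q)=f_1^2f_2^2\neq f_1^4f_2$, so the dissections \eqref{f_1^3 p d} and \eqref{psi p d} do not directly apply to the generating function \eqref{n1}. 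Second, and more fundamentally, the appearance of the single coefficient $a\bigl(\frac{p^2-1}{4}\bigr)$ inside $\gamma(n)$ is the signature of a Hecke-eigenform relation for the weight-$5/2$ eta-quotient $\eta(z)^4\eta(2z)$; Newman's proof goes through the theory of modular forms with multiplicative coefficients, and a term-by-term $p$-dissection of the product will not, by itself, produce a two-term relation whose coefficient is an eigenvalue expressed through $a\bigl(\frac{p^2-1}{4}\bigr)$. You should simply cite Newman's identity (as the paper does) rather than treat it as a dissection exercise; with that citation in place, the rest of your argument is complete and equivalent to the paper's.
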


\begin{proof}
We prove this lemma by induction on $k$. Note that this lemma is true when $k = 0$ and $k = 1$ by \eqref{l0.2.1.3}. Now suppose that Lemma \ref{l200} holds when $k = m$ and $ m + 1$, which gives that
\begin{equation}\label{l0.2.1.4}
a\left(p^{2m}n + \frac{p^{2m} - 1}{4} \right) = U_m(p)a\left(p^2 n + \frac{p^2 - 1}{4} \right) + V_m(p)a(n)
\end{equation}
and
\begin{equation}\label{l0.2.1.5}
a\left(p^{2{m+1}}n + \frac{p^{2{m+1}} - 1}{4} \right) = U_{m+1}(p)a\left(p^2 n + \frac{p^2 - 1}{4} \right) + V_{m+1}(p)a(n).
\end{equation}

In \cite{newman}, Newman proved the following identity on $a(n)$:
\begin{equation}\label{l0.2.1.6}
a\left(p^2 n + \frac{p^2 - 1}{4} \right) = \gamma(n)a(n)-p^3a\left(\frac{n - \frac{p^2 - 1}{4}}{p^2} \right),
\end{equation}
where
\begin{equation*}
    \gamma(n) := a\left(\frac{p^2 - 1}{4} \right) + (-1)^{\frac{p-1}{2}} p \left( \left(\frac{\frac{p^2 - 1}{4}}{p} \right)_L - \left( \frac{\frac{p^2 - 1}{4}-n}{p} \right)_L\right),
\end{equation*}
If we replace $n$ by $p^2 n + \frac{(p^2 - 1)}{4}$ in \eqref{l0.2.1.6}, we obtain
\begin{equation}\label{l0.2.1.7}
a\left(p^4 n + \frac{p^4 - 1}{4} \right) = ra\left(p^2 n + \frac{p^2 - 1}{4} \right) - sa(n).
\end{equation}
Replacing $n$ by $p^{2m} n + \frac{(p^{2m} - 1)}{4}$ in \eqref{l0.2.1.7} and utilizing \eqref{l0.2.1.4} and \eqref{l0.2.1.5} yields
\begin{align*}
    a\left(p^{2m+2}n + \frac{p^{2m+2} - 1}{4} \right)
&= ra\left(p^{2m+2}n + \frac{p^{2m} - 1}{4} \right) - sa\left(p^{2m}n + \frac{p^{2m} - 1}{4} \right) \\
&= r\left(U_{m+1}(p) a\left(p^2 n + \frac{p^2 - 1}{4} \right) + V_{m+1}(p)a(n)\right) \\
&\quad - s\left(U_m(p)a\left(p^2 n + \frac{p^2 - 1}{4} \right) + V_m(p)a(n)\right) \\
&= (r U_{m+1}(p) - s U_m(p)) a\left(p^2 n + \frac{p^2 - 1}{4} \right) + (r V_{m+1}(p) - s V_m(p)) a(n) \\
&= U_{m+2}(p) a\left(p^2 n + \frac{p^2 - 1}{4} \right) + V_{m+2}(p)a(n),
\end{align*}
which implies that \eqref{l0.2.1.0} holds when $k = m + 2$. Thus Lemma \ref{l200} is proved by induction.
\end{proof}

\begin{lemma}\label{l201}
    For all $k \geq 0$, we have
    \begin{equation}\label{l0.2.1.8}
rU_{\nu(p)k+\nu(p)-1}(p) + V_{\nu(p)k+\nu(p)-1}(p) \equiv 0 \pmod{8},
\end{equation}
where $\nu(p)$ is defined by \eqref{n2} and $U_k(p)$ and $V_k(p)$ are defined by \eqref{l0.2.1.1} and \eqref{l0.2.1.2}, respectively.
\end{lemma}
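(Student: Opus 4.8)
The plan is to collapse the two coupled sequences into one auxiliary sequence
\[
W_k := rU_k(p) + V_k(p).
\]
Since $U_k(p)$ and $V_k(p)$ obey the common recurrence \eqref{l0.2.1.1}--\eqref{l0.2.1.2}, so does $W_k$, namely $W_k = rW_{k-1} - sW_{k-2}$, and by the initial data \eqref{l0.2.1.3} we get $W_0 = rU_0(p)+V_0(p) = 1$ and $W_1 = rU_1(p)+V_1(p) = r$. Thus the assertion \eqref{l0.2.1.8} is exactly the claim that $W_{\nu(p)(k+1)-1} \equiv 0 \pmod{8}$ for all $k \geq 0$. The only arithmetic input I need about $s = p^3$ is that it is odd, whence $s^2 \equiv s^4 \equiv 1 \pmod{8}$; the behaviour of $r$ modulo $8$ is dictated by the three cases in the definition \eqref{n2} of $\nu(p)$.

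The key structural point is that, read modulo $8$, the pair $(W_k, W_{k+1})$ determines every later pair through the forward-deterministic recurrence, so it suffices in each case to locate a zero of the sequence at index $\nu(p)-1$ and to check that the sequence is periodic with period $\nu(p)$. First I would compute the first few $W_k \pmod{8}$ directly from the recurrence. For $\nu(p)=4$ (so $r \equiv 4 \pmod{8}$, hence $r^2 \equiv 0 \pmod 8$) this yields $W_3 \equiv 0$, and, using $s^2 \equiv 1$ together with $2r \equiv 0 \pmod 8$, one finds $(W_4, W_5) \equiv (1, r) = (W_0, W_1) \pmod{8}$; the sequence is therefore $4$-periodic, so $W_{4k+3} \equiv W_3 \equiv 0 \pmod{8}$. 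For $\nu(p)=8$ (so $r \equiv 2 \pmod{4}$, hence $r^2 \equiv 4$ and $r^m \equiv 0 \pmod 8$ for $m \geq 3$) the same scheme gives $W_7 \equiv 4r \equiv 0 \pmod 8$ and, using $s^4 \equiv 1$ and $5r \equiv r \pmod 8$ for even $r$, that $(W_8, W_9) \equiv (1, r) = (W_0, W_1) \pmod{8}$; hence the period is $8$ and $W_{8k+7} \equiv W_7 \equiv 0 \pmod{8}$.

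The case $\nu(p)=2$ (so $r \equiv 0 \pmod 8$) I would dispatch separately, because here the cross term of the recurrence drops out modulo $8$: the relation degenerates to $W_k \equiv -sW_{k-2} \pmod{8}$, decoupling the even- and odd-indexed subsequences. Iterating on the odd indices gives $W_{2k+1} \equiv (-s)^k W_1 \equiv (-s)^k r \equiv 0 \pmod{8}$, since $W_1 = r \equiv 0 \pmod 8$; as $\nu(p)(k+1)-1 = 2k+1$ is odd, this is precisely the required conclusion.

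The routine but delicate step, which I expect to be the main obstacle, is the bookkeeping modulo $8$ in the case $\nu(p)=8$: one must expand $W_7, W_8, W_9$ as integer polynomials in $r$ and $s$ and reduce them correctly using $r \equiv 2 \pmod 4$ (so that $r^2 \equiv 4$ while every higher power of $r$ vanishes mod $8$) and $s^2 \equiv 1 \pmod 8$, taking care that congruences such as $5r \equiv r$ and $4r \equiv 0 \pmod 8$ are valid only because $r$ is even. Once these reductions and the period check $(W_{\nu(p)}, W_{\nu(p)+1}) \equiv (W_0, W_1) \pmod 8$ are carried out in each of the three cases, the lemma follows at once by periodicity.
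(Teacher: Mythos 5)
Your proof is correct. Both you and the paper ultimately work with the single combination $W_k := rU_k(p)+V_k(p)$ (the paper does so without naming it), and both arguments hinge on the same explicit computation: that $W_{\nu(p)-1}$ equals the polynomial the paper calls $h(p)$ (namely $r$, $r^3-2rs$, or $r^7-6r^5s+10r^3s^2-4rs^3$ in the three cases) and that $h(p)\equiv 0\pmod 8$. Where you genuinely diverge is the propagation step. The paper inducts along the indices $\equiv\nu(p)-1\pmod{\nu(p)}$ using the $\nu(p)$-step form of the recurrence, $W_{k+\nu(p)}=h(p)W_{k+1}+g(p)W_k$ (its \eqref{l0.2.1.13}), so each inductive step needs only $h(p)\equiv 0\pmod 8$ and the previously established vanishing. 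You instead prove that the whole sequence $(W_k \bmod 8)$ is purely periodic with period $\nu(p)$ by checking that the state vector returns to $(W_0,W_1)=(1,r)$, with a separate decoupling argument when $\nu(p)=2$ and $r\equiv 0\pmod 8$. Your route costs two extra congruences per case, which do check out: for $\nu(p)=4$ one has $W_4=r^4-3r^2s+s^2\equiv s^2\equiv 1$ and $W_5\equiv 3rs^2\equiv r\pmod 8$, and for $\nu(p)=8$ similarly $W_8\equiv s^2\equiv 1$ and $W_9\equiv r\pmod 8$. In exchange you avoid having to justify the skip-recurrence identity and never need the explicit form of $g(p)$, and you obtain the slightly stronger conclusion of full periodicity of $W_k$ modulo $8$ (in particular your computation implicitly recovers $g(p)\equiv 1\pmod 8$ for $\nu(p)\in\{4,8\}$, consistent with Lemma \ref{l202}). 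The one cosmetic slip is the intermediate claim $W_7\equiv 4r$: in fact every monomial of $W_7=r^7-6r^5s+10r^3s^2-4rs^3$ already vanishes modulo $8$ when $r\equiv 2\pmod 4$, but since $4r\equiv 0\pmod 8$ for even $r$ your conclusion is unaffected.
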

\begin{proof}
    Lemma \ref{l201} will be proved by induction on $k$. In view of \eqref{l0.2.1.1}--\eqref{l0.2.1.3}, it is easy to check that
\begin{equation}
rU_{\nu(p)-1}(p) + V_{\nu(p)-1}(p) = h(p),\label{l0.2.1.9}
\end{equation}
where
\begin{equation}\label{l0.2.1.10}
h(p) := 
\begin{cases}
r, & \text{if} \hspace{1.5mm} \nu(p) = 2, \\
r^3 - 2rs, & \text{if} \hspace{1.5mm} \nu(p) = 4, \\
r^7 - 6r^5s + 10s^2r^3 - 4rs^3, & \text{if} \hspace{1.5mm} \nu(p) = 8.
\end{cases}
\end{equation}
Based on \eqref{n2} and \eqref{l0.2.1.10}, one can check that
\begin{equation}\label{l0.2.1.11}
h(p) \equiv 0 \pmod{8}.
\end{equation}
So \eqref{l0.2.1.8} is true when $k = 0$. Suppose that \eqref{l0.2.1.8} holds when $k = m$ ($m \geq 0$) which implies that
\begin{equation}\label{l0.2.1.12}
r U_{\nu(p)m+\nu(p)-1}(p) + V_{\nu(p)m+\nu(p)-1}(p) \equiv 0 \pmod{8}.
\end{equation}
In light of \eqref{l0.2.1.1}, \eqref{l0.2.1.2} and the values of $\nu(p)$, we have
\begin{align}
r U_{\nu(p)m+2\nu(p)-1}(p) + V_{\nu(p)m+2\nu(p)-1}(p)
&= h(p)\left(r U_{\nu(p)m+\nu(p)}(p) + V_{\nu(p)m+\nu(p)}(p)\right) \nonumber \\
&\quad + g(p)\left(r U_{\nu(p)m+\nu(p)-1}(p) + V_{\nu(p)m+\nu(p)-1}(p)\right),\label{l0.2.1.13}
\end{align}
where $g(p)$ and $h(p)$ are defined by \eqref{n3} and \eqref{l0.2.1.10}, respectively. 

Thanks to \eqref{l0.2.1.11}, \eqref{l0.2.1.12}, and \eqref{l0.2.1.13}, we get
\begin{equation*}
    r U_{\nu(p)m+2\nu(p)-1}(p) + V_{\nu(p)m+2\nu(p)-1}(p) \equiv 0 \pmod{8},
\end{equation*}
which implies \eqref{l0.2.1.8} when $k = m + 1$. So Lemma \ref{l201} is also proved by induction.
\end{proof}

\begin{lemma}\label{l202}
    For all $k \geq 0$, we have
\begin{align}\label{l0.2.1.14}
U_{\nu(p)k}(p) &\equiv 0 \pmod{8}
\\
\label{l0.2.1.15}
V_{\nu(p)k}(p) &\equiv g(p)^k \pmod{8},
\end{align}
    where $\nu(p)$ and $g(p)$ are defined by \eqref{n2} and \eqref{n3}, respectively.
\end{lemma}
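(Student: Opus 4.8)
The statement to prove is Lemma \ref{l202}, which asserts that for all $k \geq 0$,
\[
U_{\nu(p)k}(p) \equiv 0 \pmod{8}, \qquad V_{\nu(p)k}(p) \equiv g(p)^k \pmod{8}.
\]

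The plan is to prove both congruences simultaneously by induction on $k$, exactly paralleling the structure of the proof of Lemma \ref{l201}. First I would establish the base case $k=0$: from the initial conditions \eqref{l0.2.1.3} we have $U_0(p)=0$ and $V_0(p)=1$, so $U_{\nu(p)\cdot 0}(p)=U_0(p)=0\equiv 0\pmod 8$ and $V_{\nu(p)\cdot 0}(p)=V_0(p)=1=g(p)^0$, both trivially holding. For the inductive step, I would assume \eqref{l0.2.1.14} and \eqref{l0.2.1.15} hold at $k=m$ and seek to advance the index by $\nu(p)$. The key algebraic input is that iterating the two-term recurrences \eqref{l0.2.1.1} and \eqref{l0.2.1.2} exactly $\nu(p)$ steps yields a \emph{new} two-term recurrence in the subsampled sequences $U_{\nu(p)k}(p)$ and $V_{\nu(p)k}(p)$, whose coefficients are precisely $h(p)$ and $g(p)$ from \eqref{l0.2.1.10} and \eqref{n3}. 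Concretely, I expect an identity of the shape
\[
U_{\nu(p)(m+1)}(p) = h(p)\,U_{\nu(p)m+1}(p) + g(p)\,U_{\nu(p)m}(p),
\]
and the analogous relation for $V$, mirroring \eqref{l0.2.1.13}.

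Assuming this subsampling identity, the inductive step becomes short. Since $h(p)\equiv 0\pmod 8$ by \eqref{l0.2.1.11}, the first term drops out modulo $8$, leaving $U_{\nu(p)(m+1)}(p)\equiv g(p)\,U_{\nu(p)m}(p)\equiv g(p)\cdot 0\equiv 0\pmod 8$ by the inductive hypothesis \eqref{l0.2.1.14}, which establishes \eqref{l0.2.1.14} at $m+1$. For the $V$-sequence, the same identity gives $V_{\nu(p)(m+1)}(p)\equiv h(p)\,V_{\nu(p)m+1}(p) + g(p)\,V_{\nu(p)m}(p)\pmod 8$; again $h(p)\equiv 0\pmod 8$ kills the first term, and the inductive hypothesis \eqref{l0.2.1.15} gives $V_{\nu(p)(m+1)}(p)\equiv g(p)\,V_{\nu(p)m}(p)\equiv g(p)\cdot g(p)^m = g(p)^{m+1}\pmod 8$, completing \eqref{l0.2.1.15}.

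The main obstacle is establishing the subsampling identity with the correct coefficients $h(p)$ and $g(p)$, and verifying it separately in the three cases $\nu(p)\in\{2,4,8\}$. For $\nu(p)=2$ one composes the recurrence with itself once; for $\nu(p)=4$ and $\nu(p)=8$ one iterates further, and the coefficient of the lower-index term must be checked to equal exactly the piecewise $g(p)$ defined in \eqref{n3} (namely $-s$, $-r^2s+s^2$, and $-r^6s+5s^2r^4-6r^2s^3+s^4$ respectively). Because $U_k(p)$ and $V_k(p)$ obey the \emph{same} recurrence $W_k = rW_{k-1}-sW_{k-2}$, both satisfy the identical subsampled recurrence, so a single computation of the $\nu(p)$-step transition coefficients (for instance via the characteristic polynomial of the companion matrix, or by repeated substitution) handles both sequences at once. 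This is a finite, deterministic verification that can be carried out by direct expansion or by the same Maple assistance used elsewhere in the paper, so it presents no conceptual difficulty beyond careful bookkeeping.
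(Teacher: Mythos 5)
Your proposal is correct and follows essentially the same route as the paper: induction on $k$ with base case from $U_0(p)=0$, $V_0(p)=1$, and an inductive step driven by the $\nu(p)$-step subsampled recurrence $W_{\nu(p)m+\nu(p)}(p)=h(p)W_{\nu(p)m+1}(p)+g(p)W_{\nu(p)m}(p)$ together with $h(p)\equiv 0\pmod 8$, which is exactly the paper's \eqref{l0.2.1.18}--\eqref{l0.2.1.19} combined with \eqref{l0.2.1.11}. The case-by-case verification of the transition coefficients for $\nu(p)\in\{2,4,8\}$ that you flag is likewise left as a direct (finite) check in the paper.
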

\begin{proof}
Once again, we employ induction to prove \eqref{l0.2.1.14} and \eqref{l0.2.1.15}. Note that \eqref{l0.2.1.14} and \eqref{l0.2.1.15} hold when $k = 0$ because $U_0(p) = 0$ and $V_0(p) = 1$. Suppose that \eqref{l0.2.1.14} and \eqref{l0.2.1.15} hold when $k = m$, that means
\begin{align}\label{l0.2.1.16}
U_{\nu(p)m}(p) &\equiv 0 \pmod{8}
\\
\label{l0.2.1.17}
V_{\nu(p)m}(p) &\equiv g(p)^m \pmod{8}.
\end{align}
According to the the values of $\nu(p)$, \eqref{l0.2.1.1} and \eqref{l0.2.1.2}, we have
\begin{equation}\label{l0.2.1.18}
U_{\nu(p)m+\nu(p)}(p) = h(p) U_{\nu(p)m+1}(p) + g(p) U_{\nu(p)m}(p),
\end{equation}
\begin{equation}\label{l0.2.1.19}
V_{\nu(p)m+\nu(p)}(p) = h(p) V_{\nu(p)m+1}(p) + g(p) V_{\nu(p)m}(p),
\end{equation}
where $g(p)$ and $h(p)$ are defined by \eqref{n3} and \eqref{l0.2.1.10}, respectively. By \eqref{l0.2.1.11}, \eqref{l0.2.1.16}, and \eqref{l0.2.1.18}, we deduce that \eqref{l0.2.1.14} is true when $k = m + 1$. It follows from \eqref{l0.2.1.11}, \eqref{l0.2.1.17} and \eqref{l0.2.1.19} that
\begin{equation*}
    V_{\nu(p)m+\nu(p)}(p) \equiv g(p)^{m+1} \pmod{8},
\end{equation*}
which implies that \eqref{l0.2.1.15} holds when $k = m + 1$. Thus, Lemma \ref{l202} is proved by induction.
\end{proof}

Now, we are in a position to prove Theorem \ref{nt1}.

\begin{proof}[Proof of Theorem \ref{nt1}]
Substituting \eqref{l0.2.1.6} into \eqref{l0.2.1.0} yields
\begin{align}\label{l0.2.1.20}
a\left(p^{2k}n + \frac{p^{2k} - 1}{4}\right)
&= (U_k(p)(n) + V_k(p)) a(n) - s U_k(p) a\left(n - \frac{p^2 - 1}{4} \right).
\end{align}
Replacing $n$ by $pn + \frac{p^2 - 1}{4}$ in \eqref{l0.2.1.20}, we get
\begin{align}\label{l0.2.1.21}
a\left(p^{2k+1}n + \frac{p^{2k+2} - 1}{4}\right)
&= (r U_k(p) + V_k(p)) a\left(pn + \frac{p^2 - 1}{4} \right) - s U_k(p) a\left( \frac{n}{p} \right),
\end{align}
where $r$ and $s$ are defined by \eqref{n4}. 

Replacing $k$ by $\nu(p)k + \nu(p) - 1$ in \eqref{l0.2.1.21} and using \eqref{l0.2.1.18} gives
\begin{align}
a\left(p^{2\nu(p)(k+1)-1}n + \frac{p^{2\nu(p)(k+1)} - 1}{4}\right)
&= -s U_{\nu(p)k+\nu(p)-1}(p) a\left( \frac{n}{p} \right) \pmod{8},
\end{align}
which implies that if $p \nmid n$, then
\begin{align}\label{l0.2.1.22}
a\left(p^{2\nu(p)(k+1)-1}n + \frac{p^{2\nu(p)(k+1)} - 1}{4}\right) \equiv 0 \pmod{8}.
\end{align}
By \eqref{ez2.0},
\begin{align}\label{l0.2.1.23}
b(18n+4) \equiv 9a(n) \pmod{72},
\end{align}
Congruence \eqref{newm1} follows after replacing $n$ by $p^{2\nu(p)(k+1)-1}n + \frac{p^{2\nu(p)(k+1)} - 1}{4}$ in \eqref{l0.2.1.23} and utilizing \eqref{l0.2.1.22}.

Replacing $k$ by $\nu(p)k$ in \eqref{l0.2.1.0} and utilizing \eqref{l0.2.1.14} and \eqref{l0.2.1.15}, we get
\begin{align}\label{l0.2.1.25}
a\left(p^{2\nu(p)k}n + \frac{p^{2\nu(p)k} - 1}{4}\right) \equiv g(p)^k a(n) \pmod{8},
\end{align}
where $g(p)$ is defined by \eqref{n3}. Setting $n = 1$ in \eqref{l0.2.1.25} yields
\begin{align}\label{l0.2.1.26}
a\left(p^{2\nu(p)k} + \frac{p^{2\nu(p)k} - 1}{4}\right) \equiv 4g(p)^k \pmod{8}.
\end{align}
Replacing $n$ by $p^{2\nu(p)k} + \frac{p^{2\nu(p)k} - 1}{4}$ in \eqref{l0.2.1.23} and using \eqref{l0.2.1.26}, we arrive at \eqref{newm2}. This completes the proof.
\end{proof}

\begin{proof}[Proof of Theorem \ref{nt3}]

We verified using Radu's algorithm \cite{Rad09} that \( a(6n+4) \equiv 0 \pmod{8} \) for all \( n \geq 0 \). Replacing \( n \) with \( 6n+4 \) in \eqref{l0.2.1.25} then gives
\begin{align}\label{l0.2.1.26.0}
a\left(6p^{2\nu(p)k}n + \frac{17p^{2\nu(p)k} - 1}{4}\right) \equiv 0 \pmod{8}.
\end{align}
Replacing $n$ by $6p^{2\nu(p)k}n + \frac{17p^{2\nu(p)k} - 1}{4}$ in \eqref{l0.2.1.23} and using \eqref{l0.2.1.26.0}, we arrive at \eqref{newm2.0}. This completes the proof.
\end{proof}

\begin{proof}[Proof of Theorem \ref{nt2}]
We prove Theorem \ref{nt2} by induction on the total number of prime factors of $n$. Suppose that $m$ is a nonnegative integer such that $a(m) \equiv 0 \pmod{8}$. By \eqref{l0.2.1.23}, we have
\begin{equation*}
    b(18n+4) \equiv 9a(n) \pmod{72},
\end{equation*}
which implies \eqref{ez2} holds when $n = 1$ ($n$ has no prime factors).

Suppose that the prime factorization of $4m + 1$ is $4m + 1 = \prod_{j=1}^t h_j \prod_{j=1}^t g_j^{\alpha_j}$ with each $\alpha_j \ge 2$. Let $p_1 \ge 3$ be a prime with $(p_1, \prod_{j=1}^t g_j^{\alpha_j}) = 1$. Replacing $(n, p)$ by $(m, p_1)$ in \eqref{l0.2.1.6} and utilizing the hypothesis that $a(m) \equiv 0 \pmod{8}$ yields
\begin{equation}\label{l0.2.1.27}
a\left(mp_1^2 + \frac{p_1^2 - 1}{4}\right) \equiv -sa \left( m - \frac{p_1^2 - 1}{4p_1^2} \right) \pmod{8}.
\end{equation}
It is easy to check that
\begin{equation*}
\frac{m - \frac{p_1^2 - 1}{4}}{p_1^2} = \frac{4m + 1 - p_1^2}{4p_1^2} = \frac{\prod_{i=1}^t h_i \prod_{j=1}^t g_j^{\alpha_j} - p_1^2}{4p_1^2}
\end{equation*}
is not an integer since $\gcd(p_1, \prod_{j=1}^t g_j^{\alpha_j}) = 1$. Thus,
\begin{equation}\label{l0.2.1.28}
a\left( m - \frac{p_1^2 - 1}{4p_1^2} \right) = 0.
\end{equation}
Thanks to \eqref{l0.2.1.27} and \eqref{l0.2.1.28}, we obtain
\begin{equation*}
    a\left( mp_1^2 + \frac{p_1^2 - 1}{4} \right) \equiv 0 \pmod{8},
\end{equation*}
from which with \eqref{l0.2.1.23}, we see that
\begin{equation*}
    b\left( 18mp_1^2+\frac{9p_1^2-1}{2} \right)\equiv 0 \pmod{72}.
\end{equation*}
Therefore, \eqref{ez2} holds when $n = p_1$ ($n$ has only one prime factor).
Now assume that \eqref{ez2} holds for all integers with no more than $k$ prime factors. To prove Theorem \ref{nt2}, it suffices to show that \eqref{ez2} holds when $n$ has $k + 1$ prime factors. Write $n$ as $n = p_1 p_2 \cdots p_k p_{k+1}$ where $3 \le p_1 \le p_2 \le \cdots \le p_k < p_{k+1}$ with $(p_1 \cdots p_{k} p_{k+1}, \prod_{j=1}^t g_j^{\alpha_j}) = 1$. By \eqref{l0.2.1.23} and the hypothesis that \eqref{ez2} holds for all integers with no more than $k$ prime factors, we have
\begin{align}\label{l0.2.1.29}
a\left( mp_1^2 p_2^2 \cdots p_{k-1}^2 + \frac{p_1^2 p_2^2 \cdots p_{k-1}^2 - 1}{4} \right) &= 9b\left( 18mp_1^2 p_2^2 \cdots p_{k-1}^2+\frac{9p_1^2 p_2^2 \cdots p_{k-1}^2-1}{2} \right)\notag\\
&\equiv 0 \pmod{72}
\end{align}
and
\begin{align}\label{l0.2.1.30}
&a\left(mp_1^2 p_2^2 \cdots p_{k-1}^2p_{k+1}^2 + \frac{p_1^2 p_2^2 \cdots p_{k-1}^2p_{k+1}^2 - 1}{4} \right)\notag\\
&\equiv 9b\left( 18mp_1^2 p_2^2 \cdots p_{k-1}^2p_{k+1}^2+\frac{9p_1^2 p_2^2 \cdots p_{k-1}^2p_{k+1}^2-1}{2} \right) \notag\\
&\equiv 0 \pmod{72}.
\end{align}
If we replace $(n,p)$ by $\left(mp_1^2 p_2^2 \cdots p_{k-1}^2p_{k+1}^2 + \frac{p_1^2 p_2^2 \cdots p_{k-1}^2p_{k+1}^2 - 1}{4}, p_{k+1}\right)$ in \eqref{l0.2.1.6}, and utilize \eqref{l0.2.1.30}, we have
\begin{align}\label{l0.2.1.31}
a&\left( mp_1^2 p_2^2 \cdots p_{k-1}^2p_k^2 p_{k+1}^2 + \frac{p_1^2 p_2^2 \cdots p_{k-1}^2p_k^2 p_{k+1}^2 - 1}{4} \right) \notag\\
&\equiv -sa\left(\frac{mp_1^2 \cdots p_{k-1}^2p_k^2 + \frac{p_1^2 \cdots p_{k-1}^2p_k^2 - 1}{4}}{p_{k+1}^2}\right) \pmod{8}.
\end{align}
If $p_{k+1} = p_k$, then \eqref{l0.2.1.31} can be rewritten as
\begin{align}\label{l0.2.1.32}
a\left(mp_1^2 p_2^2 \cdots p_k^2 p_{k+1}^2 + \frac{p_1^2 p_2^2 \cdots p_k^2 p_{k+1}^2 - 1}{4} \right) 
&\equiv -sa\left(mp_1^2 \cdots p_k^2p_{k+1}^2 + \frac{p_1^2 \cdots p_k^2p_{k+1}^2 - 1}{4}\right) \notag\\
&\equiv 0 \pmod{8}. \qquad \text{(by \eqref{l0.2.1.29})}
\end{align}
If $p_{k+1} > p_k$, then $p_{k+1} \notin \{p_1, p_2, \ldots, p_k\}$. Note that
\begin{align*}
\frac{mp_1^2 p_2^2 \cdots p_{k-1}^2p_{k}^2 + \frac{p_1^2 \cdots p_{k-1}^2p_{k}^2 - p_{k+1}^2}{4}}{p_{k+1}^2} &= \frac{(4m + 1)p_1^2 \cdots p_k^2 - p_{k+1}^2 - p_{k+1}^2}{4p_{k+1}^2}\\
&
= \frac{p_1^2 \cdots p_k^2 \prod h_i \prod g_j^{\alpha_j} - p_{k+1}^2}{4p_{k+1}^2}
\end{align*}
is not an integer since $\left(p_{k+1}, \prod g_j^{\alpha_j} \right) = 1$ and $\left(p_{k+1}, p_1 \cdots p_k \right) = 1$. Thus,
\begin{equation}\label{l0.2.1.33}
a\left( \frac{mp_1^2 p_2^2 \cdots p_{k-1}^2p_k^2 + \frac{p_1^2 \cdots p_{k-1}^2p_k^2 - p_{k+1}^2}{4}}{p_{k+1}^2} \right) = 0.
\end{equation}

In light of \eqref{l0.2.1.31}--\eqref{l0.2.1.33}, we get
\begin{equation*}
    a\left( mp_1^2 p_2^2 \cdots p_k^2 p_{k+1}^2 + \frac{p_1^2 p_2^2 \cdots p_k^2 p_{k+1}^2 - 1}{4} \right) \equiv 0 \pmod{8},
\end{equation*}
from which with \eqref{l0.2.1.23}, we deduce that \eqref{ez2} holds when $n = p_1 p_2 \cdots p_k p_{k+1}$. Therefore, Theorem \ref{nt2} is proved by induction. This completes the proof.
\end{proof}

\section{Concluding Remarks}\label{sec 7}
\begin{itemize}
    
    \item[1. ] An analytic or combinatorial proof of \eqref{Gen P_B 3n} is desirable.  
    \item[2. ] It appears that the coefficients of $\mathcal{B}(q)$ are rich in arithmetic properties. Surely, many more congruences for $\mathcal{B}(q)$ are waiting to be discovered. For example, the following individual congruences can be readily verified using Radu's algorithm \cite{Rad09}. However, it would be interesting to find their $q$-series proofs. For all $n \geq 0$, we have  
\begin{align*}        
        b\left(30n+r\right)&\equiv 0 \pmod{20},\quad r\in\{6,18\},\\
        b\left(42n+r\right)&\equiv 0 \pmod{4},\quad r\in\{6,30,36\},\\
        b\left(42n+r\right)&\equiv 0 \pmod{7},\quad r\in\{21,33,39\},\\
        b\left(60n+r\right)&\equiv 0 \pmod{8},\quad r\in\{36,48\},\\
        b\left(84n+r\right)&\equiv 0 \pmod{8},\quad r\in\{36,48,72\},\\
        b\left(150n+r\right)&\equiv 0 \pmod{16},\quad r\in\{72,108\},\\
        b\left(300n+r\right)&\equiv 0 \pmod{25},\quad r\in\{177,237,297\}.
\end{align*}
\item[3. ] Using Radu's algorithm, one can verify that the following congruences hold for all \( n \geq 0 \)
\begin{align*}
    a(9n+5) &\equiv  a(9n+8) \equiv 0 \pmod{8},\\
    a(10n+4) &\equiv a(10n+8) \equiv 0 \pmod{8},\\
    a(14n+4) &\equiv  a(14n+8) \equiv  a(14n+10) \equiv 0 \pmod{8}
\end{align*}
and so on. Therefore, if we replace \( n \) with an arithmetic progression \( An + B \) such that \( a(n) \equiv 0 \pmod{8} \) in \eqref{l0.2.1.25}, and then substitute the resulting arithmetic progression into \eqref{l0.2.1.23}, we obtain several congruences for \( b(n) \) modulo \( 72 \).
\item[4. ] As a natural extension, one may find infinite families for $b(n)$
modulo \(162\) and higher moduli. We leave this task to the interested reader.
\item[5. ] We state the following conjecture.
\begin{conjecture}\label{Con1}
Suppose that $m$ is a positive integer such that $a(m) \equiv 0 \pmod{8}$, where $a(m)$ is defined in \eqref{n1}. Let $4m + 3 = \prod_{i=1}^{u} h_i \prod_{j=1}^{v} g_j^{\alpha_j}$ with each $\alpha_j \geq 2$ is the prime factorization of $4m + 3$. Then for $n \geq 1$, we have
\begin{equation*}\label{ez3}
b\left( 18mn^2+\frac{9n^2-1}{2} \right)\equiv 0 \pmod{72},
\end{equation*}
where $\left(n, 2 \prod_{j=1}^{v} g_j^{\alpha_j} \right) = 1$.
\end{conjecture}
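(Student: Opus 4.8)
The plan is to convert the assertion about $b$ into one about the coefficients $a(n)$ of $f_1^4 f_2$ and then run an induction on the number of prime factors of $n$. Using the congruence \eqref{l0.2.1.23}, namely $b(18N+4)\equiv 9a(N)\pmod{72}$, I first observe that
\begin{equation*}
18mn^2+\frac{9n^2-1}{2}=18N+4,\qquad N=mn^2+\frac{n^2-1}{4},
\end{equation*}
so that $b(18mn^2+\tfrac{9n^2-1}{2})\equiv 9\,a\!\left(mn^2+\tfrac{n^2-1}{4}\right)\pmod{72}$. Since $9\cdot 8=72$, the theorem reduces to the single claim that
\begin{equation*}
a\!\left(mn^2+\frac{n^2-1}{4}\right)\equiv 0\pmod 8
\end{equation*}
for every $n$ with $\gcd\!\left(n,2\prod_{j}g_j^{\alpha_j}\right)=1$. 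The coprimality to $2$ forces $n$ odd, which I will use to guarantee integrality in the reduction step below.

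I would prove this claim by strong induction on $\Omega(n)$, the number of prime factors of $n$ counted with multiplicity. The case $\Omega(n)=0$ is exactly the hypothesis $a(m)\equiv 0\pmod 8$. For $\Omega(n)=1$, say $n=p$, I apply Newman's identity \eqref{l0.2.1.6} with $n$ replaced by $m$ to write $a\!\left(mp^2+\tfrac{p^2-1}{4}\right)$ as $\gamma(m)a(m)$ minus $s$ times $a\!\left(\tfrac{4m+1-p^2}{4p^2}\right)$; the first summand vanishes modulo $8$ by hypothesis, and since $\gcd(p,\prod_j g_j^{\alpha_j})=1$ the prime $p$ divides $4m+1$ to order at most one, so $\tfrac{4m+1-p^2}{4p^2}$ is not an integer and the corresponding $a$-value is $0$.

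For the inductive step with $\Omega(n)=t\ge 2$, factor out the largest prime $p\mid n$, write $n=pn'$, and record the key algebraic identity
\begin{equation*}
mn^2+\frac{n^2-1}{4}=p^2M+\frac{p^2-1}{4},\qquad M=mn'^2+\frac{n'^2-1}{4}.
\end{equation*}
Applying \eqref{l0.2.1.6} gives $a\!\left(p^2M+\tfrac{p^2-1}{4}\right)\equiv\gamma(M)a(M)-s\,a\!\left(\tfrac{(4m+1)n'^2-p^2}{4p^2}\right)\pmod 8$. The first term is $\equiv 0\pmod 8$ since $\Omega(n')=t-1$ and the induction hypothesis applies to $a(M)$. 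For the second term I split into two cases. If $p\nmid n'$, then $p$ divides $(4m+1)n'^2$ to order at most one, so $\tfrac{(4m+1)n'^2-p^2}{4p^2}$ is not an integer and its $a$-value is $0$. If $p\mid n'$, write $n'=pn''$ so that $n''=n/p^2$ has $\Omega(n'')=t-2$; the argument then collapses to
\begin{equation*}
\frac{(4m+1)n'^2-p^2}{4p^2}=\frac{(4m+1)n''^2-1}{4}=mn''^2+\frac{n''^2-1}{4},
\end{equation*}
which is a genuine nonnegative integer (oddness of $n''$ makes $(4m+1)n''^2\equiv 1\pmod 4$), and the induction hypothesis applied to $n''$ forces this $a$-value to be $\equiv 0\pmod 8$ as well.

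I expect the bookkeeping around repeated prime factors to be the main obstacle: the argument hinges on correctly separating the cases $p\nmid n'$ and $p\mid n'$, and in the latter on verifying both that the reduced argument is an integer of the exact shape $mn''^2+\tfrac{n''^2-1}{4}$ and that it still satisfies the coprimality hypothesis so that induction applies. These integrality and coprimality checks are precisely where the conditions $\gcd(n,2\prod_j g_j^{\alpha_j})=1$ and each $\alpha_j\ge 2$ enter --- the latter guaranteeing that any prime dividing $4m+1$ to order $\ge 2$ is one of the $g_j$ and hence excluded from $n$. A secondary subtlety is that the case $p\mid n'$ lowers $\Omega$ by two rather than one, so strong rather than ordinary induction is required.
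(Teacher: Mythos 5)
The statement you were asked to prove is Conjecture \ref{Con1}, which the paper does not prove at all --- it is left open in the concluding remarks. What you have written is, in structure and in every detail, the paper's proof of Theorem \ref{nt2}, which is the statement with $4m+1$ in place of $4m+3$. Your argument never actually engages the hypothesis of the conjecture: at every point where the coprimality condition matters you invoke the factorization of $4m+1$ (``$p$ divides $4m+1$ to order at most one, so $\tfrac{4m+1-p^2}{4p^2}$ is not an integer'', ``any prime dividing $4m+1$ to order $\ge 2$ is one of the $g_j$''), whereas in Conjecture \ref{Con1} the $g_j$ are by definition the primes dividing $4m+3$ to order at least two. Since $\gcd(4m+1,4m+3)=1$, knowing that $n$ is coprime to the squareful part of $4m+3$ gives no control whatsoever over the squareful part of $4m+1$, which is the quantity that actually appears when Newman's identity \eqref{l0.2.1.6} is applied to $a\bigl(mn^2+\tfrac{n^2-1}{4}\bigr)$.

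This is a genuine gap, not a notational slip. Concretely, suppose $p$ is an odd prime with $p^2\mid 4m+1$. Then $p\nmid 4m+3$, so $n=p$ is admissible under the conjecture's hypothesis; but $\tfrac{4m+1-p^2}{4p^2}$ \emph{is} then a nonnegative integer (the numerator is divisible by $p^2$ by assumption and by $4$ because $p^2\equiv 1\pmod 8$), so the term $-s\,a\bigl(\tfrac{4m+1-p^2}{4p^2}\bigr)$ in \eqref{l0.2.1.6} does not vanish for the reason you cite, and nothing in the hypotheses forces it to be $\equiv 0\pmod 8$. Your base case $\Omega(n)=1$ therefore already breaks down for such $p$, and the same obstruction recurs at every inductive step. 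The reduction via \eqref{l0.2.1.23} and the induction scheme are sound, but as written they only deliver Theorem \ref{nt2}; establishing the conjecture would require a new idea explaining why a hypothesis on $4m+3$, rather than on $4m+1$, suffices.
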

\item[6. ] We conclude the paper with the following $q$-product identities analogous to \eqref{Gen P_B 3n} for the remaining second order mock theta functions, obtained using the \texttt{qseries} package in Maple. An interested reader may attempt to provide analytic proofs of these.
\begin{align*}
    \sum_{n \geq 0} \mathcal{A}_2(3n+1) q^n &= \frac{f_2^4 f_3^2 f_4}{f_1^5 f_6}, \\
    \sum_{n \geq 0} \mu_2(3n+1) q^n &= -\frac{f_2^7 f_{12}^2}{f_1 f_4^6 f_6},
\end{align*}
where 
\begin{align*}
\mathcal{A}(q) &=\sum_{n \geq 0} \frac{q^{(n+1)^2}(-q;q^2)_n}{(q;q^2)_{n+1}^2} = \sum_{n \geq 0} \frac{q^{n+1}(-q^2;q^2)_n}{(q;q^2)_{n+1}}=\sum_{n\ge0}\mathcal{A}_2(n)q^n,\\
    \mu(q) &:= \sum_{n \geq 0} \frac{(-1)^n q^{n^2} (q;q^2)_n}{(-q^2;q^2)_n^2}=\sum_{n\ge0}\mu_2(n)q^n.
\end{align*}
\end{itemize}


\begin{thebibliography}{QWY18}

\bibitem[AB16]{AN}
Zakir Ahmed and Nayandeep~Deka Baruah.
\newblock New congruences for $\ell$-regular partitions for $\ell \in 5,6,7,49$.
\newblock {\em The Ramanujan Journal}, 40:649--668, 2016.

\bibitem[Ber06]{Spirit}
Bruce~C. Berndt.
\newblock {\em Number theory in the spirit of {R}amanujan}, volume~34 of {\em Student Mathematical Library}.
\newblock American Mathematical Society, Providence, RI, 2006.

\bibitem[BOR08]{BringmannOnoRhoades}
Kathrin Bringmann, Ken Ono, and Robert Rhoades.
\newblock Eulerian series as modular forms.
\newblock {\em Journal of the American Mathematical Society}, 21(4):1085--1104, 2008.

\bibitem[Bur21]{burson}
Hannah Burson.
\newblock Combinatorics of two second-order mock theta functions.
\newblock {\em International Journal of Number Theory}, 17(02):285--295, 2021.

\bibitem[CM12]{CM}
Song~Heng Chan and Renrong Mao.
\newblock Two congruences for appell--lerch sums.
\newblock {\em International Journal of Number Theory}, 8(01):111--123, 2012.

\bibitem[Gar99]{Gar99}
Frank Garvan.
\newblock A $q$-product tutorial for a q-series maple package.
\newblock {\em S{\'e}m. Lothar. Combin}, 42:27--52, 1999.

\bibitem[Hir17]{hir}
Michael~D Hirschhorn.
\newblock The power of q.
\newblock {\em Developments in Mathematics}, 49, 2017.

\bibitem[KR22]{kaur}
Harman Kaur and Meenakshi Rana.
\newblock On second order mock theta function b (q).
\newblock {\em Electronic Research Archive}, 30(1):52--65, 2022.

\bibitem[Mao20]{Mao}
Renrong Mao.
\newblock Arithmetic properties of coefficients of the mock theta function.
\newblock {\em Bulletin of the Australian Mathematical Society}, 102(1):50--58, 2020.

\bibitem[McI07]{McIntosh2007}
Richard~J McIntosh.
\newblock Second order mock theta functions.
\newblock {\em Canadian Mathematical Bulletin}, 50(2):284--290, 2007.

\bibitem[New62]{newman}
Morris Newman.
\newblock Modular forms whose coefficients possess multiplicative properties, ii.
\newblock {\em Annals of Mathematics}, 75(2):242--250, 1962.

\bibitem[QWY18]{quwangyao}
YK~Qu, YJ~Wang, and Olivia~XM Yao.
\newblock Generalizations of some conjectures of chan on congruences for appell--lerch sums.
\newblock {\em Journal of Mathematical Analysis and Applications}, 460(1):232--238, 2018.

\bibitem[Rad09]{Rad09}
Silviu Radu.
\newblock An algorithmic approach to {R}amanujan's congruences.
\newblock {\em Ramanujan Journal}, 20(2):215--251, 2009.

\bibitem[XX25]{Xia1}
Ernest~XW Xia and Liu~Jin Xin.
\newblock The powers of euler’s product and congruences for certain partition functions.
\newblock {\em The Ramanujan Journal}, 67(1):1--34, 2025.

\bibitem[Zwe02]{zwegers}
Sander Zwegers.
\newblock {\em Mock Theta Functions}.
\newblock Ph.d. thesis, Utrecht University, 2002.
\newblock Available at Utrecht University Repository.

\end{thebibliography}
\end{document}